\DeclareSymbolFont{bbold}{U}{bbold}{m}{n}
\DeclareSymbolFontAlphabet{\mathbbold}{bbold}
\newtheorem{maintheorem}{Theorem}
\newtheorem{dummy}{dummy}[section]
\newtheorem{assumptions}[dummy]{Assumptions}              
\newtheorem{lemma}[dummy]{Lemma}
\newtheorem{theorem}[dummy]{Theorem}
\newtheorem{corollary}[dummy]{Corollary}
\newtheorem{proposition}[dummy]{Proposition}
\theoremstyle{definition}                                  
\newtheorem{definition}[dummy]{Definition}
\newtheorem{example}[dummy]{Example}
\newtheorem{remark}[dummy]{Remark}
\newtheorem*{example*}{Example}
\newtheorem*{remark*}{Remark}
\DeclareMathOperator{\Spec}{Spec}
\DeclareMathOperator{\bbHom}{Hom}
\newcommand{\bbHOM}{\mathcal{H}om}
\DeclareMathOperator{\End}{End}
\newcommand{\module}{\mathrm{mod}}
\DeclareMathOperator{\Sym}{Sym}
\DeclareMathOperator{\Lie}{Lie}
\DeclareMathOperator{\Ext}{Ext}
\DeclareMathOperator{\Rep}{Rep}
\newcommand{\Ad}{Ad}
\newcommand{\Perv}{\mathbf{Perv}}
\newcommand{\too}{\longrightarrow}
\newcommand{\ls}[2]{\leftidx{^{#1}}{#2}{}}
\newcommand{\lrsub}[3]{\leftidx{_{#1}}{{#2}}{_{#3}}}
\newcommand{\lrsubsuper}[5]{\leftidx{_{#1} ^{#2}}{{#3}}{_{#4} ^{#5}}}
\newcommand{\GIT}{{/\! /}}
\newcommand{\dw}{\dot{w}}
\DeclareMathOperator{\RES}{{\bf Res}}
\DeclareMathOperator{\IND}{{\bf Ind}}
\newcommand{\ind}{\mathbf{ind}}
\newcommand{\res}{\mathbf{res}}
\newcommand{\ad}{\mathrm{ad}}
\newcommand{\St}[2]{\lrsub{#1}{\underline{\mathfrak{st}}}{#2}}
\newcommand{\Stw}[3]{\leftidx{_{#1}}{\underline{\mathfrak{st}}}{^{#3} _{#2}}}
\DeclareMathOperator{\ST}{{\bf St}}
\DeclareMathOperator{\st}{{\bf st}}
\newcommand{\GS}[1]{\widetilde{\fg}_{#1}}
\newcommand{\Fl}[1]{\mathcal F\ell_{#1}}
\newcommand{\Four}{\mathbb{F}}
\newcommand{\reg}{\mathrm{reg}}
\newcommand{\nonreg}{\mathrm{nonreg}}
\newcommand{\Levi}{\mathcal Levi}
\newcommand{\comm}{\mathfrak{comm}}
\newcommand{\coker}{\mathrm{coker}}
\newcommand{\fc}{\mathfrak{c}}
\newcommand{\fg}{\mathfrak{g}}
\newcommand{\fh}{\mathfrak{h}}
\newcommand{\fp}{\mathfrak{p}}
\newcommand{\fu}{\mathfrak{u}}
\newcommand{\fv}{\mathfrak{v}}
\newcommand{\fE}{\mathfrak{E}}
\newcommand{\fF}{\mathfrak{F}}
\newcommand{\fG}{\mathfrak{G}}
\newcommand{\fM}{\mathfrak{M}}
\newcommand{\fN}{\mathfrak{N}}
\newcommand{\fK}{\mathfrak{K}}
\newcommand{\fL}{\mathfrak {L}}
\newcommand{\fD}{\mathfrak{D}}
\newcommand{\fz}{\mathfrak{z}}
\newcommand{\fl}{\mathfrak{l}}
\newcommand{\fq}{\mathfrak{q}}
\newcommand{\fn}{\mathfrak{n}}
\newcommand{\fm}{\mathfrak{m}}
\newcommand{\ffi}{\mathfrak{i}}
\newcommand{\fj}{\mathfrak{j}}
\newcommand{\fk}{\mathfrak{k}}
\newcommand{\fX}{\mathfrak{X}}
\newcommand{\fgl}{\mathfrak{gl}}
\newcommand{\cC}{\mathcal C}
\newcommand{\cD}{\mathcal D}
\newcommand{\cE}{\mathcal E}
\newcommand{\cF}{\mathcal F}
\newcommand{\cH}{\mathcal H}
\newcommand{\cK}{\mathcal K}
\newcommand{\cL}{\mathcal L}
\newcommand{\cN}{\mathcal N}
\newcommand{\cO}{\mathcal O}
\newcommand{\cQ}{\mathcal Q}
\newcommand{\cS}{\mathcal S}
\newcommand{\cU}{\mathcal U}
\newcommand{\bD}{\mathbf{D}}
\newcommand{\bM}{\mathbf{M}}
\newcommand{\A}{\mathbb A}
\newcommand{\C}{\mathbb C}
\newcommand{\D}{\mathbb D}
\newcommand{\F}{\mathbb F}
\newcommand{\G}{\mathbb G}
\newcommand{\Z}{\mathbb Z}
\newcommand{\uG}{\underline{G}}
\newcommand{\ug}{{\underline{\mathfrak{g}}}}
\newcommand{\ul}{{\underline{\mathfrak{l}}}}
\newcommand{\up}{\underline{\mathfrak{p}}}
\newcommand{\uq}{\underline{\mathfrak{q}}}
\newcommand{\um}{{\underline{\mathfrak{m}}}}
\newcommand{\uk}{{\underline{\mathfrak{k}}}}
\newcommand{\uh}{\underline{\mathfrak{h}}}
\newcommand{\ucN}{\underline{\mathcal{N}}}
\newcommand*\leftdash{\rotatebox[origin=c]{-45}{$\dabar@\dabar@\dabar@$}}
\newcommand*\rightdash{\rotatebox[origin=c]{45}{$\dabar@\dabar@\dabar@$}}
\newcommand{\quot}[3]{{#1}\backslash{#2}/{#3}} 
\newcommand{\adjquot}{{/_{\hspace{-0.2em}ad}\hspace{0.1em}}}
\title{Generalized Springer Theory for $D$-modules \\on a Reductive Lie Algebra}
\author{Sam Gunningham}
\begin{document}
	\begin{abstract}
		Given a reductive group $G$, we give a description of the abelian category of $G$-equivariant $D$-modules on $\fg=\Lie(G)$, which specializes to Lusztig's generalized Springer correspondence upon restriction to the nilpotent cone. More precisely, the category has an orthogonal decomposition in to blocks indexed by cuspidal data $(L,\cE)$, consisting of a Levi subgroup $L$, and a cuspidal local system $\cE$ on a nilpotent $L$-orbit. Each block is equivalent to the category of $D$-modules on the center $\fz(\fl)$ of $\fl$ which are equivariant for the action of the relative Weyl group $N_G(L)/L$. The proof involves developing a theory of parabolic induction and restriction functors, and studying the corresponding monads acting on categories of cuspidal objects. It is hoped that the same techniques will be fruitful in understanding similar questions in the group, elliptic, mirabolic, quantum, and modular settings. 
	\end{abstract}

	\maketitle
	

	\subsection*{Main results}
	In his seminal paper \cite{lusztig_intersection_1984}, Lusztig proved the Generalized Springer Correspondence, which gives a description of the category of $G$-equivariant perverse sheaves on the nilpotent cone $\cN_G \subseteq \fg = \Lie(G)$, for a reductive group $G$:
	\[
	\Perv_G(\cN_G) \simeq \bigoplus^\perp_{(L,\cE)} \Rep(W_{G,L}).
	\]
The sum is indexed by \emph{cuspidal data}: pairs $(L,\cE)$ of a Levi subgroup $L$ of $G$ and simple cuspidal local system on a nilpotent orbit for $L$, up to simultaneous conjugacy. For each such Levi $L$, $W_{G,L} = N_G(L)/L$ denotes the corresponding relative Weyl group. 
	
	The main result of this paper is that Lusztig's result extends to a description of the abelian category $\bM(\fg)^G$ of all $G$-equivariant $D$-modules on $\fg$:
	\begin{maintheorem}\label{theoremabelian}
		There is an equivalence of abelian categories:
		\[
		\bM(\fg)^G \simeq \bigoplus^\perp_{(L,\cE)} \bM\left(\fz(\fl)\right)^{W_{G,L}},
		\]
		where the sum is indexed by cuspidal data $(L,\cE)$.
	\end{maintheorem}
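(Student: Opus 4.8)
The plan is to construct parabolic induction and restriction functors for $D$-modules, use them to cut $\bM(\fg)^G$ into orthogonal blocks controlled by cuspidal objects on Levi subalgebras, and then identify each block as a category of modules over an explicit monad via the Barr--Beck theorem. First, for each parabolic $P\subseteq G$ with Levi quotient $L$, I would define $\res_L^G\colon\bM(\fg)^G\to\bM(\fl)^L$ and $\ind_L^G\colon\bM(\fl)^L\to\bM(\fg)^G$ from the diagram $\fg\xleftarrow{q}G\times_P\fp\xrightarrow{p}\fl$, roughly as $\res_L^G=p_!q^*$ and $\ind_L^G=q_*p^!$, suitably normalized. A Braden-type hyperbolic-localization argument ($\fn$ being the attracting space of a cocharacter into the center of $L$) shows that the $!$- and $*$-variants of each functor agree and that the functors are independent of the parabolic $P$ with Levi $L$; with the right normalization they are exact. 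The formal properties to record are: biadjointness (each of $\ind_L^G$, $\res_L^G$ is both a left and a right adjoint of the other); transitivity $\ind_L^M\circ\ind_M^G\simeq\ind_L^G$ for $L\subseteq M\subseteq G$ and dually for $\res$; a Mackey formula presenting $\res_{L'}^G\circ\ind_L^G$ as a direct sum, over $w\in W_{L'}\backslash W/W_L$, of composites $\ind_{L'\cap{}^{w}L}^{L'}\circ({}^{w}-)\circ\res_{L\cap{}^{w^{-1}}L'}^{L}$; and compatibility of $\ind$ with the Fourier transforms on $\fl$ and $\fg$.

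Next, call an object of $\bM(\fl)^L$ cuspidal if it is annihilated by $\res_M^L$ for every proper Levi $M\subsetneq L$. Using $\fl=\fz(\fl)\oplus[\fl,\fl]$ and the fact that every Levi of $L$ contains $\fz(\fl)$, one reduces to the semisimple case and shows — by a cuspidal-support and cleanness argument that upgrades Lusztig's analysis to $D$-modules and imports his classification of cuspidal pairs — that the cuspidal subcategory of $\bM(\fl)^L$ is $\bigoplus_{\cE}\bM(\fz(\fl))$, the sum over the finitely many simple cuspidal local systems $\cE$ on nilpotent $L$-orbits, with $\cM\in\bM(\fz(\fl))$ corresponding to the (clean) object $\cM\boxtimes\IC(\overline{\cO_\cE},\cE)$. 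One then proves $\bM(\fg)^G=\bigoplus^\perp_{(L,\cE)}\cB_{(L,\cE)}$ over $G$-conjugacy classes of cuspidal data, where $\cB_{(L,\cE)}$ is the Serre subcategory generated by $\ind_L^G$ applied to the $\cE$-summand. Orthogonality is formal from adjunction plus the Mackey formula: every summand of $\res_{L'}^G\ind_L^G\cC$ factors through $\ind_M^{L'}$ with $M=L'\cap{}^{w}L$, which pairs trivially with a cuspidal $\cC'$ unless $M=L'$, i.e. unless $L'$ is $G$-conjugate into $L$; combining both adjunctions and a dimension count, non-conjugate data give $\mathrm{Hom}=0$. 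Generation — that every simple object of $\bM(\fg)^G$ occurs in some $\ind_L^G(\cC\boxtimes\cM)$ with $\cC$ cuspidal — is the substantive point, proved by induction on $\dim\fg$ and a d\'evissage along the stratification of $\fg$ by $G$-orbits of pairs $\big(Z_G(x_s),\ \text{nilpotent }Z_G(x_s)\text{-orbit of }x_n\big)$.

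To identify a block, fix cuspidal data $(L,\cE)$: the adjunction $\ind_L^G\dashv\res_L^G$ restricts to an adjunction between the $\cE$-summand $\bM(\fz(\fl))$ of the cuspidal category and $\cB_{(L,\cE)}$, so $T:=\res_L^G\ind_L^G$ is a monad on $\bM(\fz(\fl))$. The Mackey formula evaluates it: all terms running through proper Levis of $L$ vanish by cuspidality of $\cE$, and since $N_G(L)/L$ stabilizes $(\cO_\cE,\cE)$ (Lusztig), the surviving terms assemble into $T\simeq\bigoplus_{w\in W_{G,L}}w^{*}$ with the obvious convolution product — the smash-product (skew-group) monad $\bM(\fz(\fl))\rtimes W_{G,L}$, whose category of modules is precisely that of $W_{G,L}$-equivariant $D$-modules on $\fz(\fl)$. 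Biadjointness together with the generation property makes $\res_L^G$ conservative on $\cB_{(L,\cE)}$, and it is exact; since $\bM(\fz(\fl))$ has all colimits, the abelian Barr--Beck theorem yields $\cB_{(L,\cE)}\simeq T\text{-}\mathrm{mod}\simeq\bM(\fz(\fl))^{W_{G,L}}$. Summing over all $(L,\cE)$ gives the theorem.

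I expect the main obstacle to be the cuspidal classification together with the generation statement, since that is where Lusztig's generalized Springer correspondence must be genuinely reworked in the $D$-module setting: one must show that no "extra" holonomic cuspidal objects appear beyond the clean $\IC$-sheaves of Lusztig's cuspidal pairs, and that their parabolic inductions exhaust $\bM(\fg)^G$ — this draws on the full geometry of the Jordan decomposition and of $G$-orbits on $\fg$. A secondary difficulty is making the Mackey bookkeeping in the last step precise enough to see that $T$ is the honest smash product with no cocycle twist, which turns on controlling the $W_{G,L}$-action on the cuspidal datum.
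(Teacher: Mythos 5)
Your overall strategy matches the paper's: construct parabolic induction and restriction, identify cuspidals, cut into blocks indexed by cuspidal data, and apply Barr--Beck to identify each block with $\bM(\fz(\fl))^{W_{G,L}}$. But there are two genuine gaps at the two hardest steps, and a subsidiary misstatement.

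The first gap is exactness. You assert that $\res^G_L$ and $\ind^G_L$ become $t$-exact ``with the right normalization'' and that this is part of the hyperbolic-localization package. Braden/Drinfeld--Gaitsgory gives you the second adjunction (that $\res$ from the opposite parabolic is \emph{left} adjoint to $\ind$), which is genuinely useful, but it does not by itself give $t$-exactness. In the setting of character or orbital sheaves there are several proofs of exactness, but they all hinge on the constraint that the objects are holonomic with controlled support; none of them applies to arbitrary coherent $\fD_\fg$-modules. The paper's proof of exactness is its most novel technical contribution: one shows (Lemma 2.7 in the paper) that the nonregular locus of the commuting variety $\comm(\fl)$ contains no nonempty $L$-invariant coisotropic subvariety, and deduces via Gabber's involutivity theorem that no nonzero $D$-module can have singular support there. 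Combined with the fact that over the regular locus the Grothendieck--Springer map is \'etale (and even a $W_{G,L}$-Galois cover on the cuspidal sublocus), one gets exactness by an induction on the rank and a d\'evissage via Fourier transform. Without this singular-support input there is no route to exactness in your outline.

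The second gap is the identification of the monad. You describe the step where the Mackey formula ``evaluates'' $T=\res^G_L\ind^G_L$ to the smash product $W_{G,L}$-monad as ``bookkeeping'' to rule out a cocycle twist, and call it a secondary difficulty. In fact this is the core technical theorem of the paper (Theorem 4.18): a priori the Mackey formula gives only a \emph{filtration} of $T$ with associated graded $\bigoplus_{w\in W_{G,L}} w_\ast$, and in the derived category this filtration genuinely does not split (the paper emphasizes that this fails already for $G=SL_2$ and a Borel). On the abelian level one must show the filtration splits \emph{as a monad}, and the proof is not formal: it combines (i) an $\Ext^1$-vanishing between the pieces $\cO_v,\cO_w$ for $v\ne w$ in $\bM(\fz\times\fz)$ (so that some splitting of the underlying object exists), (ii) an identification of the algebra object over the regular locus, where the Steinberg stack is the action groupoid of the Galois cover, and (iii) a uniqueness-of-extension lemma to propagate the monad isomorphism from $\fz^\reg\times\fz^\reg$ to all of $\fz\times\fz$. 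Your proposal does not supply any of these three ingredients, and the ``generation'' d\'evissage you propose cannot substitute for them.

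Finally, a smaller point: independence of the choice of parabolic is \emph{not} a consequence of Braden's theorem and does \emph{not} hold at the derived level; in the paper it is deduced at the abelian level only after the monad identification (Corollary 4.24), again by first establishing it over the regular locus and then extending. Your framing, which front-loads independence as a formal consequence of hyperbolic localization, reverses the actual logical order.
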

	Here $\fz(\fl)$ denotes the center of the Lie algebra $\fl$ of a Levi subgroup $L$ which carries an action of the finite group $W_{G,L}$,\footnote{In fact, in the cases when $L$ carries a cuspidal local system, $W_{G,L}$ is a Coxeter group and $\fz(\fl)$ its reflection representation.}  and $\bM(\fz(\fl))^{W_{G,L}}$ denotes the category of $W_{G,L}$-equivariant $D$-modules on $\fz(\fl)$, or equivalently, modules for the semidirect product $\fD_{\fz(\fl)} \rtimes W_{G,L}$. If we restrict to the subcategory of modules with support on the nilpotent cone (which can be identified with the category of equivariant perverse sheaves via the Riemann-Hilbert correspondence), each block restricts to the category of equivariant $D$-modules supported at $\{0\} \subseteq \fz(\fl)$, which is identified with the category of representations of $W_{G,L}$; this recovers the Generalized Springer Correspondence. 
	\begin{example*}
		In the case $G=GL_n$, it is known that there is a unique cuspidal datum up to conjugacy, corresponding to a maximal torus of $GL_n$. Thus we have:
		\[
		\bM(\fg\fl_n)^{GL_n} \simeq \bM(\A^n)^{S_n} \simeq \fD_{\A^n} \rtimes S_n-\module
		\]
		
	\end{example*}
	\begin{remark*}
	The result for $GL_n$ can also be seen by using the functor of quantum Hamiltonian reduction, which, by the Harish-Chandra homomorphism of Levasseur-Stafford \cite{levasseur_invariant_1995,levasseur_kernel_1996}, naturally takes values in $\left(\fD_{\A^n}\right)^{S_n}-\module \simeq \fD_{\A^n} \rtimes S_n-\module$. In general, quantum Hamiltonian reduction will only see the Springer block (corresponding to the unique cuspidal datum associated with the maximal torus).
	\end{remark*}
	
	The proof of Theorem \ref{theoremabelian} is based on the idea that the category $\bM(\fg)^G$ can be described in terms of parabolic induction from \emph{cuspidal} objects in the category $\bM(\fl)^L$ associated to a Levi subgroup $L$ of $G$, where cuspidal objects are precisely those which themselves cannot be obtained from parabolic induction from a smaller Levi subgroup. The blocks of $\bM(\fg)^G$ corresponding to cuspidal data $(L,\cE)$ (for fixed $L$ and all corresponding $\cE$), consist precisely of summands of parabolic induction from cuspidal objects in $\bM(\fl)^L$.
		A priori, parabolic induction and restriction are a pair of adjoint triangulated functors between the equivariant derived categories $\bD(\fg)^G$ and $\bD(\fl)^L$. In this paper, we prove:
	\begin{maintheorem}\label{maintheoremindres}
		Parabolic induction and restriction restrict to bi-adjoint exact functors between abelian categories:
		\[
		\xymatrixcolsep{3pc}\xymatrix{
			\ind^G_{L}: \bM(\fl)^L \ar@/_0.3pc/[r] &  \ar@/_0.3pc/[l] \bM(\fg)^G:  
			\res_{L}^G
		}
		\]
		In addition, these functors are independent of the choice of parabolic subgroup $P$ containing $L$, and satisfy a Mackey formula:
		\[
		  \lrsub{M}{\st}{L} :=\res^G_{M} \ind^G_{L} (\fM) \simeq \bigoplus\limits_{w \in \quot{W_L}{W_G}{W_M}} \ind _{M \cap \ls \dw L} ^{M} \res^{\ls \dw L} _{M \cap \ls \dw L}
			{\dw _\ast}(\fM)
		\] 
	\end{maintheorem}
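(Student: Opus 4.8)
\emph{Set-up.} The plan is to realise both functors through a span of quotient stacks and to reduce the theorem to a short list of geometric inputs, the essential one being $t$-exactness. Fix $P\supseteq L$ with unipotent radical $U_P$, so $\fg=\fu_P^-\oplus\fl\oplus\fu_P$ and $\fp=\fl\oplus\fu_P$, and consider
\[\fl/L\ \xleftarrow{\ q\ }\ \fp/P\ \xrightarrow{\ p\ }\ \fg/G ,\]
where $q$ is the composite of the affine bundle $\fp/P\to\fl/P$ (fibre $\fu_P$) with the base change $\fl/P\to\fl/L$ of $BP\to BL$ --- the two relative dimensions cancel, so $q$ is smooth of relative dimension $0$ and $q^!=q^*$ --- and $p$ is the composite of the closed embedding $\fp/P\hookrightarrow\fg/P$ (of codimension $\dim\fu_P^-$) with the proper smooth projection $\fg/P\to\fg/G$. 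Set $\ind^G_L=p_*q^!$ and $\res^G_L=q_*p^!$. The first adjunction $\ind^G_L\dashv\res^G_L$ is then immediate (from $p^*\dashv p_*=p_!\dashv p^!$, using properness of $p$, together with the adjunctions for the smooth $q$), and $\ind^G_L$ is automatically Verdier self-dual, because Verdier duality commutes with $p_*=p_!$ and with $q^!=q^*$.

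\emph{Independence of $P$ and the second adjunction.} These I would obtain together from a contraction argument. A generic cocharacter $\lambda\in\fz(\fl)$ cuts out $P$, and $\mathrm{Ad}\circ\lambda$ makes $\fg$ a contracting $\G_m$-variety with fixed locus $\fl$ and attracting (resp.\ repelling) subvariety $\fp$ (resp.\ $\fp^-$); every object of $\bM(\fg)^G$ is $\lambda(\G_m)$-equivariant, hence monodromic for this action. Applying Braden's hyperbolic localisation in the monodromic $D$-module setting --- set up at the level of $U_P$-equivariant $D$-modules, where $\lambda$ acts nontrivially, and then descended --- shows first that $\res^G_L$ depends only on the fixed locus $\fl$, not on the chamber of $\lambda$, hence not on $P$; and second, via the identification of the attracting and repelling hyperbolic localisations, that $\ind^G_L$ is simultaneously the left and the right adjoint of $\res^G_L$. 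Together with the first adjunction, this gives the asserted bi-adjoint pair.

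\emph{Exactness --- the main obstacle.} For regular holonomic $D$-modules --- in particular all of $\Perv_G(\fg)\supseteq\Perv_G(\cN)$ --- exactness of $\ind^G_L$ and $\res^G_L$ is, through the semismallness of $p$, essentially Lusztig's theory; the genuinely new point is to extend it to every object of $\bM(\fg)^G$, including the non-holonomic ones. Using the two adjunctions and the Verdier self-duality of $\ind^G_L$, it suffices to prove that $\ind^G_L$ is right $t$-exact: then $\ind^G_L$ is left $t$-exact by self-duality, and $\res^G_L$ is $t$-exact by the adjunctions. Since $q^!=q^*$ is $t$-exact, right $t$-exactness of $\ind^G_L=p_*q^!$ amounts to a semismallness bound for the proper map $p$ valid not just on holonomic objects but on every object whose singular support is constrained by $G$-equivariance, that is, lies in the preimage under $q$ of the commuting variety $\{(x,\xi):[x,\xi]=0\}$. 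I would prove this bound by a dimension count on the characteristic cycle, stratifying $\fg$ by $G$-orbit type: over the regular semisimple locus $p$ is a finite covering, so the bound is immediate (the base case), while a general stratum is, by the Luna slice theorem and the Jordan decomposition, \'etale-locally modelled near $x=x_s+x_n$ on the Lie algebra $\fg_{x_s}$ near the nilpotent $x_n$ --- a proper Levi as soon as $x_s\neq0$ --- so an induction on $\dim G$ reduces everything to a neighbourhood of $\cN$. The residual nilpotent case I would treat using that the Fourier transform on $\fg$ is $t$-exact and, by the previous paragraph, commutes with $\ind^G_L$ and $\res^G_L$ up to the harmless swap $P\leftrightarrow P^-$, so that the nilpotent locus can be traded against the generic one. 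The delicate points are carrying through the slice reduction compatibly with the functors and the Fourier bookkeeping near $\cN$.

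\emph{The Mackey formula.} I would compute $\res^G_M\ind^G_L$ by base change around the fibre product $\fp_L/P_L\times_{\fg/G}\fp_M/P_M$ of the two spans, which is legitimate because the leg $p_L$ is proper. Recording the relative position of the two parabolics gives a map from this stack to $P_L\backslash G/P_M\cong\quot{W_L}{W_G}{W_M}$; over the cell indexed by $w$, a routine identification of the arising intersections of parabolic and Levi subalgebras (of the shape $\fp_L\cap\mathrm{Ad}(\dw)\fp_M$) exhibits the push--pull contribution as $\ind^M_{M\cap\ls{\dw}{L}}\,\res^{\ls{\dw}{L}}_{M\cap\ls{\dw}{L}}\,\dw_*$, with the twist $\dw_*$ coming from $G$-conjugation. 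Finally, that these contributions assemble into a direct sum rather than a nontrivial iterated extension follows once more from a contraction argument --- a suitable torus acts on the fibre product with the cells as its attracting strata, and the corresponding hyperbolic-localisation filtration of $\res^G_M\ind^G_L$ is canonically split.
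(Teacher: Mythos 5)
Your proposal gets the overall shape right but contains two gaps that are not cosmetic.

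\emph{On independence of $P$ and the splitting of the Mackey filtration.} You attribute both to Braden's hyperbolic localization (and a variant of it for the Steinberg fibre product). This cannot work, because the paper explicitly notes (and illustrates already for $G=SL_2$) that at the level of the equivariant derived category the functors $\RES^G_{P,L}$ \emph{do} depend on the choice of parabolic and the Mackey filtration does \emph{not} split. Since a hyperbolic-localization argument, had it worked, would take place entirely in the derived setting, it cannot be the mechanism. What the paper actually uses is specifically abelian-category input: after translating the Steinberg monad $\lrsubsuper{L,P}{}{\st}{P,L}{}$ on a cuspidal block into an algebra object $\fK_{\st}$ in $\bM(\fz(\fl)\times\fz(\fl))$, one shows (i) $\Ext^1(\cO_v,\cO_w)=0$ for $v\neq w$, so the filtration must split in some way; (ii) over the regular locus the Steinberg stack is an honest $W_{G,L}$-action groupoid, giving a canonical identification of monads; and (iii) $\bbHom(\cO_v,\cO_w)\simeq\bbHom(\cO_v|_\reg,\cO_w|_\reg)$, so the splitting over the regular locus extends uniquely. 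Point (iii) is precisely what fails for derived $\bbHom$'s, which is why the statement is abelian-only. Both the independence-of-$P$ statement and the direct-sum form of the Mackey formula in Theorem~\ref{maintheoremindres} are deduced from this, not from a contraction argument.

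\emph{On exactness.} Your skeleton (induction on rank, regular locus as base case, Fourier transform to swap roles) is the right shape and matches the paper's, but you have not named the single geometric lemma that closes the induction. Your ``semismallness bound valid on objects with singular support in the commuting variety'' has no clear meaning for non-holonomic objects, and neither a characteristic-cycle dimension count nor the Fourier transform by itself rules out a higher cohomology sheaf whose singular support sits entirely inside the nilpotent-times-nilpotent region of $\comm(\fl)$ (the nilpotent cone is Fourier self-dual, so the ``trade'' does not obviously terminate). The paper's key input, absent from your sketch, is Lemma~\ref{lemmairregsingsupp}: the non-regular locus $\comm(\fl)^{\nonreg}$ contains no nonempty $L$-invariant coisotropic subvariety, and hence (by Gabber) cannot support the singular support of a nonzero $D$-module. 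Given this, the argument is: if $\cH^i\fN\neq 0$ for $i\neq 0$, its singular support must contain a point $(x,y)\in\comm(\fl)^{\reg}$; replacing by semisimple parts and conjugating, one reduces to $H(x)\cap H(y)=L$; if $H(x)=L$ or $H(y)=L$ one contradicts exactness over the regular locus (directly, or after Fourier transform); otherwise both $H(x)$ and $H(y)$ are proper Levis and one applies the inductive hypothesis for a smaller group. Without Lemma~\ref{lemmairregsingsupp} the step ``must contain a point in the regular locus'' is unjustified.

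What you do get right and which coincides with the paper: the second adjunction comes from Drinfeld--Gaitsgory's theorem on Braden hyperbolic localization (Theorem~\ref{theoremsecond}), the unsplit Mackey filtration comes from base change over the Steinberg stratification (Proposition~\ref{propositionmackey}), and once one of $\ind$, $\res$ is known $t$-exact, the other follows from the second adjunction (Corollary~\ref{corollarysecond}).
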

	When the object $\fM$ is cuspidal, the Mackey formula reduces to:
	\[
	\lrsub{L}{\st}{L}(\fM) \simeq \bigoplus_{w\in W_{G,L}} w_\ast(\fM).
	\]
	The core technical result of this paper states that the above isomorphism defines an isomorphism of monads acting on the category of cuspidal objects of $\bM(\fl)^L$.

	There is also a geometric characterization of the blocks corresponding to a fixed Levi $L$. The singular support of a $G$-equivariant $\fD_\fg$-module gives rise to a closed subvariety of the variety $\comm(\fg)$ of commuting elements of $\fg$. We define a locally closed partition of $\comm(\fg)$, indexed by 				conjugacy classes of Levi subgroups $L$ of $G$:
	\begin{equation*}\label{equationpartition}
	\comm(\fg) = \bigsqcup\limits_{(L)} \comm(\fg)_{(L)}.
	\end{equation*}
	In particular, we have a closed subset $\comm(\fg)_{\geq(L)}$ (respectively, $\comm(\fg)_{>(L)}$) given by the union of $\comm(\fg)_{(M)}$ for Levi subgroups $M$ which contain (respectively, properly contain) a conjugate of $L$.

	\begin{maintheorem}\label{maintheorempartition}	
		Given a non-zero indecomposable object $\fM \in \bM(\fg)^G$, the following are equivalent:
		\begin{enumerate}
		\item $\fM$ is a quotient of $\ind^G_L(\fN)$ for some cuspidal object $\fN$ in $\bM(\fl)^L$.
		\item $\fM$ is a direct summand of $\ind^G_L(\fN)$ for some cuspidal object $\fN$ in $\bM(\fl)^L$.
		\item The singular support of $\fM$ is contained in $\comm(\fg)_{\geq(L)}$, but not in $\comm(\fg)_{>(L)}$.
		\end{enumerate}
	\end{maintheorem}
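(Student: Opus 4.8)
\noindent\emph{Proof strategy.} Since $(2)\Rightarrow(1)$ is immediate, the plan is to show that each of $(1)$, $(2)$, $(3)$ is equivalent, for an indecomposable $\fM$, to $\fM$ lying in the \emph{$L$-part} of Theorem~\ref{theoremabelian}, meaning the orthogonal sum $\bigoplus^\perp_\cE\bM(\fz(\fl))^{W_{G,L}}$ of the blocks whose Levi is $G$-conjugate to $L$ (the inner sum over cuspidal local systems $\cE$ on nilpotent $L$-orbits up to $N_G(L)$-conjugacy). Indecomposability is used only so that $\fM$ lies in a single block, and so that an indecomposable quotient of an object supported in a sum of blocks lies in one of them.

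For $(1)\Leftrightarrow(2)$ I would argue formally from Theorem~\ref{theoremabelian} and its proof: by construction each block of that decomposition is the full subcategory of direct summands of $\ind^G_L(\fN)$ for $\fN$ cuspidal of a fixed cuspidal support (the monad isomorphism $\res^G_L\ind^G_L(\fN)\simeq\bigoplus_{w\in W_{G,L}}w_\ast(\fN)$ on cuspidal objects being exactly what turns this into the category of $W_{G,L}$-equivariant $D$-modules on $\fz(\fl)$). As $\ind^G_L$ is exact (Theorem~\ref{maintheoremindres}) and the blocks are mutually orthogonal, $\ind^G_L(\fN)$ and all of its quotients lie in the $L$-part; an indecomposable quotient therefore lies in a single block, and is then --- by the description of that block --- itself a direct summand of $\ind^G_L(\fN')$ for some cuspidal $\fN'$. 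Hence $(1)$ and $(2)$ each hold exactly when $\fM$ lies in the $L$-part.

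The geometric heart is the claim $(\star)$: \emph{if $0\neq\fM$ is indecomposable in the block attached to $(M,\cE')$, then $\mathrm{SS}(\fM)\subseteq\comm(\fg)_{\geq(M)}$ and $\mathrm{SS}(\fM)\not\subseteq\comm(\fg)_{>(M)}$.} Granted $(\star)$, a point of $\mathrm{SS}(\fM)$ in $\comm(\fg)_{(M)}$ must by $(3)$ also lie in $\comm(\fg)_{\geq(L)}$, which forces $(M)\geq(L)$, and symmetrically $(L)\geq(M)$ --- so $M$ is conjugate to $L$ and $(3)$ too characterises the $L$-part (one uses only that distinct strata are disjoint and that $\comm(\fg)_{\geq(L)}=\bigsqcup_{(M'')\geq(L)}\comm(\fg)_{(M'')}$). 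For the \emph{upper bound} in $(\star)$, write $\fM$ as a summand of $\ind^G_M(\fN)$ with $\fN$ cuspidal, realise $\ind^G_M$ (up to equivariance bookkeeping) as $p_\ast q^\ast$ along the parabolic correspondence of stacks $\fm/M\xleftarrow{q}\fp/P\xrightarrow{p}\fg/G$ with $q$ smooth and $p$ proper ($P$ a parabolic with Levi $M$), and propagate singular supports through the induced correspondence on commuting varieties by the smooth-pullback and proper-pushforward bounds; the key input is that a cuspidal $\fN$ --- the external product of the clean extension of $\cE'$ on its orbit $\cO'$ with a $D$-module $\cV$ on $\fz(\fm)$ --- has $\mathrm{SS}(\fN)$ inside the product of the variety of commuting pairs of nilpotent elements of $[\fm,\fm]$ with $T^\ast\fz(\fm)$, the nilpotency of the conormal directions of $\cO'$ coming from the fact that a cuspidal local system is supported on a \emph{distinguished} orbit; this set pushes forward onto $\comm(\fg)_{\geq(M)}$. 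For the \emph{lower bound}, one uses that $\res^G_M\fM\neq 0$ (else $\fM$, a summand of $\ind^G_M\res^G_M\fM$, would vanish) is cuspidal, together with the transverse (Lusztig) slice description of parabolic restriction over the locus where $Z_G(x_s)$ is conjugate to $M$, and --- to reach the cotangent directions --- the $D$-module Fourier transform, which preserves the block decomposition and swaps the two factors of $\comm(\fg)$; combining these produces a component of $\mathrm{SS}(\fM)$ meeting the open stratum $\comm(\fg)_{(M)}$.

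\textbf{Main obstacle.} The categorical step is routine; the work is all in $(\star)$, i.e.\ in the singular-support estimates. The delicate points are: (i) controlling $\mathrm{SS}(\fN)$ for a cuspidal $\fN$ that need not be holonomic, so that the $\fz(\fm)$-directions contributed by $\cV$ do not enlarge the associated Levi; (ii) propagating singular supports accurately through the non-smooth, non-proper features of the parabolic correspondence, so that the bound pushes forward onto \emph{exactly} $\comm(\fg)_{\geq(M)}$ (which is what pins down the definition of the stratification); and (iii) the lower bound --- showing that singular support is fine enough to \emph{detect} the Levi of the block, for which the transverse-slice description of $\res^G_M$ and the Fourier symmetry do the essential work.
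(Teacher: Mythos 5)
Your categorical step (the equivalence of (1), (2), and membership in the $L$-part of the decomposition) matches the paper: it invokes Theorem \ref{theoremabelian} plus the fact (Lemma \ref{lemmasummand}) that every object in a block $\bM(\ug)_{(L,\cE)}$ is a direct summand of $\ind^G_{P,L}\res^G_{P,L}(\fM)$. So far so good.

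Where you diverge is the ``geometric heart'' $(\star)$. You propose to prove the singular-support bounds by hand: bound $SS$ of a cuspidal $\fN$ from the distinguished-orbit structure of $\cE$, then propagate it through the parabolic correspondence via the smooth/proper estimates, and for the lower bound invoke transverse slices and Fourier transform. This can be made to work (pushing $\comm(\fm)_\heartsuit$ through the correspondence does land in $\comm(\fg)_{\geq(M)}$, essentially because a commuting semisimple pair in $\fp$ whose images lie in $\fz(\fm)$ is $P$-conjugate into $\fz(\fm)$, and for the lower bound the Fourier symmetry of Lemma \ref{lemmafourier} and Lemma \ref{lemmairregsingsupp} do what you want), but you are essentially re-deriving Theorem \ref{theoremkernel} and Corollary \ref{corollarycuspidal} from scratch. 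The paper's proof is much shorter precisely because those results are already in place as black boxes: by Theorem \ref{theoremkernel}, $SS(\fM)\subseteq\comm(\fg)_{\nleq(L)}$ is \emph{equivalent} to $\res^G_{P,L}\fM\simeq 0$, so the upper bound $SS(\fM)\subseteq\comm(\fg)_{\geq(L)}$ follows at once from the Mackey formula (Lemma \ref{lemmarestrictadjoint}: restriction to a Levi not containing a conjugate of $L$ kills parabolic induction from cuspidals on $L$), and the lower bound $SS(\fM)\not\subseteq\comm(\fg)_{>(L)}$ is just the observation that $\comm(\fg)_{>(L)}\subseteq\comm(\fg)_{\nleq(L)}$, so containment there would force $\res^G_{P,L}\fM\simeq 0$ and hence $\fM\simeq 0$ (since $\fM$ is a summand of $\ind^G_{P,L}\res^G_{P,L}\fM$). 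Your approach is correct in spirit but does not exploit the paper's key technical tool, Theorem \ref{theoremkernel}, which converts singular-support conditions into parabolic-restriction vanishing and collapses the entire ``geometric heart'' into a two-line argument. If you are going to develop this theorem independently, it would be cleaner to isolate and prove the equivalence of Theorem \ref{theoremkernel} first (which is what the paper's Section \ref{sectionproperties} is largely devoted to), rather than burying that content inside the proof of Theorem C.
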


The results can be interpreted as follows: the partition of $\comm(\fg)$ gives rise to a decomposition of the category of $G$-equivariant $D$-modules (which, a priori, is only semiorthogonal) by considering the subcategories of objects with certain singular support with respect to the partition. Theorem \ref{maintheorempartition} implies that this  agrees with the recollement situation given by parabolic induction and restriction functors, which in this case happens to be an orthogonal decomposition.
\begin{remark*}
	The category of \emph{character sheaves} consists of objects in $\bM(\fg)^G$ whose singular support is contained in $\fg\times \cN_G$. For such objects, the singular support condition in Theorem \ref{maintheorempartition} reduces to a support condition with respect to the corresponding (Lusztig) partition of $\fg$.
\end{remark*}

\subsection*{Background}
The study of equivariant differential equations and perverse sheaves on reductive groups (or Lie algebras) has a rich history. Given a complex reductive group $G$, Harish-Chandra showed that any invariant eigendistribution on (a real form of) $G$ satisfies a certain system of differential equations \cite{harish-chandra_invariant_1964}. The Harish-Chandra systems were reinterpreted by Hotta and Kashiwara \cite{hotta_invariant_1984} using $D$-module theory, and explained the connection to Springer theory (the geometric construction of representations of the Weyl group), as developed by Springer \cite{springer_construction_1978}, Kazhdan-Lusztig \cite{kazhdan_topological_1980}, and
Borho--MacPherson \cite{borho_partial_1983}. In his seminal paper \cite{lusztig_intersection_1984} Lusztig defined and classified cuspidal local systems and proved the generalized Springer Correspondence, which classifies equivariant perverse sheaves on the unipotent cone of $G$. He then went on to develop the theory of character sheaves, leading to spectacular applications to the representation theory of finite groups of Lie type. These ideas were recast in the $\cD$-module setting by Ginzburg \cite{ginsburg_admissible_1989,ginzburg_induction_1993} and Mirkovic \cite{mirkovic_character_2004} leading to many simplifications. More recent work of Rider and Russell explore the derived direction of generalized Springer theory \cite{achar_green_2011,rider_formality_2013,rider_perverse_2016}.

One aspect of the work presented here which is fundamentally different from that of previous authors, is that our results concern all equivariant $D$-modules, not just character sheaves (in particular, not just holonomic $D$-modules). The theory of character sheaves (as reformulated by Ginzburg) is about equivariant $\cD$-modules on $G$ for which the Harish-Chandra center $Z = (\fD_G)^{G\times G}$ acts locally finitely; thus character sheaves are discrete over the space $\Spec(Z)$ of central characters - character sheaves with incompatible central characters don't ``talk'' to each other. This becomes even clearer in the Lie algebra setting, where character sheaves can just be defined as the Fourier transforms of orbital $D$-modules (i.e. those with support on finitely many orbits). Many of our results in this paper have well-known analogues in the theory of character (or orbital) sheaves. For example, there are a variety of different proofs of exactness of parabolic induction and restriction (Theorem \ref{maintheoremindres}) in the setting of character and orbital sheaves (see e.g. Lusztig \cite{lusztig_character_1985}, Ginzburg \cite{ginzburg_induction_1993}, Mirkovic \cite{mirkovic_character_2004}, Achar \cite{achar_green_2011}); however, each of these proofs makes essential use of the character sheaf restriction, so they do not lead to a proof of Theorem \ref{maintheoremindres} (as far as the author is aware, the idea behind our proof is essentially new).

The change in focus from character sheaves to all equivariant $D$-modules is conceptually important as it allows one to do harmonic analysis on the category $\bM(\fg)^G$. For example, one might want to try to formulate a Plancherel theorem: express an arbitrary equivariant $D$-module as a direct integral of character sheaves.  

The setting of $G$-equivariant $D$-modules on $\fg$ is, in a certain sense, the simplest example in a family of settings:
\begin{itemize}
	\item \textbf{The mirabolic setting} Let $\fX = \fgl_n \times \C^n$. The group $GL_n$ acts on $\fX$ by the adjoint action on the first factor, and the standard representation on the second factor. We consider the category of $GL_n$-equivariant $D$-modules on $\fX$; more generally, we can consider the category of $c$-monodromic $D$-modules for any $c\in \C$. There is a well known relationship between such $D$-modules and modules for the spherical subalgebra of the rational Cherednik algebra (with parameter $c$); these ideas have been the subject of considerable interest, for notably in the work of Ginzburg with Bellamy, Etingof, Finkelberg, and Gan \cite{bellamy_hamiltonian_2015, etingof_symplectic_2002, finkelberg_mirabolic_2010, gan_almost_2006}. Our results in this paper suggest a new approach to this topic via parabolic induction and restriction functors. Work of McGerty and Nevins \cite{mcgerty_morse_2014} describes a recollement situation associated to a certain stratification of the cotangent bundle $T^\ast \fX$; it seems natural to construct this recollement from such induction and restriction functors. 
	\item \textbf{The group setting.} Most of the results in this paper carry over to the category of $G$-equivariant $D$-modules on the group $G$. This topic will be addressed in a sequel to this paper. One reason that this setting is more interesting is that this category appears as the value assigned to a circle by a certain topological field theory (TFT) $Z_G$. This TFT is constructed and studied in work of the author with David Ben-Zvi and David Nadler \cite{Ben-Zvi_character_2017} (based on earlier work of Ben-Zvi and Nadler \cite{ben-zvi_character_2009}), where we show that the value of $Z_G$ on an oriented surface $\Sigma$ is the Borel-Moore homology of the Betti moduli stack of $G$-local systems on $\Sigma$. The cohomology (and Hodge theory) of closely related varieties are the subject of conjectures of Hausel, Letellier, and Rodriguez-Villegas \cite{hausel_mixed_2008, hausel_arithmetic_2011}.
		\item \textbf{The quantum setting} The ring of \emph{quantum differential operators}, $\cD^q_G$ is a certain deformation of the ring of functions on $G\times G$. It has been studied extensively by Semenov-Tyan-Shanskii, Alekseev, Backelin--Kremnizer, and Jordan  \cite{alekseev_integrability_1993, backelin_quantum_2006, semenov-tian-shansky_poisson_1994, jordan_quantum_2009}. In the beautiful papers \cite{ben-zvi_integrating_2015, ben-zvi_quantum_2016} of Ben-Zvi--Brochier--Jordan it was shown that $\cD^q_G$ (or more precisely its category of equivariant modules $\bM_q(\uG)$) is obtained as the factorization homology on a genus one curve with coefficients in the category $\Rep_q(G)$ of representations of the quantum group.
	\item \textbf{The elliptic setting.} Let $G_E$ be the moduli of degree $0$, semistable $G$-bundles on an elliptic curve $E$, together with a framing at the identity element of $E$; this is a smooth variety with an action of $G$, changing the framing. The $G$-equivariant geometry of $G_E$ is closely related to the geometry of $G$ and $\fg$ (which can be defined in the same way as $G_E$, but replacing $E$ with a nodal or cusp curve). The category of $G$-equivariant $D$-modules on $G_E$ is the home of \emph{elliptic character sheaves}. The starting point of the study of vector bundles on elliptic curves was the work of Atiyah \cite{atiyah_vector_1957}, with subsequent work by Friedman--Morgan--Witten \cite{friedman_principal_1998}, and Baranovsky--Ginzburg (later with Evens) \cite{baranovsky_conjugacy_1996} \cite{baranovsky_representations_2003}. More recent work in this direction has been done by Ben-Zvi--Nadler \cite{ben-zvi_elliptic_2015}, Li--Nadler \cite{li_uniformization_2015}, and Fratila \cite{fratila_stack_2016}. Elliptic character sheaves can be thought of as a model for character sheaves for the loop group $LG$. Thus, the study of generalized Springer theory in this setting is relevant both to local and global (over $E$) versions of geometric Langlands. 
\end{itemize}

 \subsection*{Outline of the paper}

 	\begin{itemize}
 		\item In Section \ref{sectionmackey}, we define derived functors of parabolic induction and restriction, and show that they satisfy a general form of the Mackey theorem. 
 		\item In Section \ref{sectionreg} we study certain partitions of $\fg$ and the variety of commuting elements $\comm(\fg)$ indexed by Levi subgroups $L$ of $\fg$. 
 		\item In Section \ref{sectionproperties} we show that the parabolic induction and restriction functors restrict to exact functors on the level of abelian categories. 
 		\item In Section \ref{sectionabelian}, we show that the monad associated to the induction/restriction adjunction acting on a block of cuspidal objects is described by the action of the relative Weyl group. The monad can be thought of as a relative version of the Borel-Moore homology of the Steinberg variety with its convolution operation.\footnote{In this paper it will be convenient to make use of the categorical language of monads as a convenient tool. However, in our setting, the categories on which the monads act can all be represented as modules for an algebra. In this case, the notion of a monad is very concrete: a (colimit preserving) monad acting on the category $R-\module$ for a ring $R$ is an algebra object in the monoidal category of $R$-bimodules. This is the same thing as an \emph{$R$-ring}, i.e. a ring $A$ together with a  (not necessarily central) ring map $R\to A$. A module for the monad $A$ in $R-\module$ is nothing more than an $A$-module.} As in other incarnations of Springer theory, we first consider the restriction of the monad to the regular locus and then extend. In particular, this gives a new proof of the classical results of Springer theory. This allows us to deduce Theorem \ref{theoremabelian}. The existence of a recollement situation may be already deduced from the theory of induction and restriction; the fact that the recollement is split follows from the semisimplicity of the group algebra of the Weyl group.
 		\item In Appendix \ref{appendixcat}, we give an overview of some of the category theory required in this paper. In particular, we explain how the Barr-Beck theorem allows one to upgrade certain adjunctions to recollement situations.
 		\item In Appendix \ref{appendixdmod}, we gather some general results on $D$-modules.
 	\end{itemize}

\subsection*{Notation}
The following overview of notational conventions may be helpful when reading this paper.
\begin{itemize}
	\item The following is a summary of the notation for $D$-modules (further details can be found in Appendix \ref{appendixdmod}). The abelian category of (all) $D$-modules on a smooth variety or stack $X$ is denoted $\bM(X)$; the (unbounded) derived category is denoted $\bD(X)$. Thus if $U$ is a smooth algebraic variety with an action of an affine algebraic group $K$, and $X=U/K$ is the quotient stack, we write $\bM(X)$ or $\bM(U)^K$ for the abelian category of $K$-equivariant $D$-modules on $U$. The equivariant derived category $\bD(X)=\bD(U)^K$ is a $t$-category with heart $\bM(X)$ (which is not equivalent to the derived category of $\bM(X)$ in general).
	If $X$ is not smooth, then $\bM(X)$ means the subcategory of $D$-modules on some smooth ambient variety or stack with support on $X$.
	\item Throughout the paper, $G$ always refers to a complex reductive group $P$ and $Q$ are parabolic subgroups with unipotent radicals $U$ and $V$ respectively, and $L = P/U$, $M=Q/V$ are the Levi factors. Frequently, we will consider splittings of $L$ and $M$ as subgroups of $G$, and we usually want them to contain a common maximal torus. Lie algebras are denoted by fraktur letter $\fg, \fp , \fq$, etc. as usual.
	\item We use the notation $\ug$ to refer to the quotient stack $\fg/G$. Thus $\bM(\ug)$ means the same thing as $\bM(\fg)^G$. Similarly, we have $\up$, $\ul$, etc. This, and further notation for related stacks (e.g. the Steinberg stack $\St{Q}{P}$) is introduced in Subsection \ref{subsectionsteinberg}.
	\item Functors of parabolic induction and restriction (introduced in Subsection \ref{ssparabolic}) will be denoted by $\IND^G_{P,L}$ and $\RES^G_{P,L}$ in the derived category context, and $\ind^G_{P,L}$ and $\res^G_{P,L}$ in the abelian category context (once we have deduced that they are $t$-exact). The Steinberg functors are the composites of parabolic induction with restriction, and are denoted by $\ST$ (in the derived setting) or $\st$ (in the abelian setting).
	\item Given an element $x\in \fg$, we write $H(x)=H_G(x)$ for the centralizer $C_G(x_s)$ of the semisimple part of $x$ with respect to the Jordan decomposition. This is a Levi subgroup of $G$ (as it is the centralizer of the 1-dimensional subtorus of $\fg$ containing $x$; see \cite{borel_groupes_1965}, Th\'eor\`em 4.15).
	\item We frequently work with the poset $\Levi_G$ of Levi subgroups of $G$ up to conjugacy, ordered by inclusion. Thus $(M)\leq (L)$ means that some conjugate of $M$ is contained in $L$.   
	\item We use a superscript on the left to denote the adjoint action or conjugation. Thus $\ls gx$ means $\Ad(g)(x)$ (for some $g\in G$, $x\in \fg$), and if $P$ is a subgroup of $G$, then $\ls gP$ means $gPg^{-1}$.
\end{itemize}

	\subsection*{Acknowledgments}
	Much of this paper was written while I was a postdoc at MSRI, and I would like to thank them for their hospitality. This work (which arose from my PhD thesis) has benefited greatly from numerous conversations with various people over the last few years. The following is a brief and incomplete list of such individuals whom I would especially like to thank, with apologies to those who are omitted. P. Achar, G. Bellamy, D. Ben-Zvi, D. Fratila, D. Gaitsgory,  D. Jordan, D. Juteau,  P. Li, C. Mautner, D. Nadler, L. Rider, T. Schedler. I am particularly grateful to an anonymous referee for their comments and suggestions on an earlier draft.
	
    \subsection*{Note} This version of the paper has been revised and differs in places from the published paper. I would like to thank V. Ginzburg for drawing my attention to an error in an earlier version.

	\section{Mackey Theory}\label{sectionmackey}
	In this section we will define the functors of parabolic induction and
	restriction between categories of $D$-modules (or constructible sheaves), and define the Mackey filtration in that setting.
	
	\subsection{The adjoint quotient and Steinberg stacks}\label{subsectionsteinberg}
	Throughout this subsection, we will fix a connected reductive algebraic group $G$, and parabolic subgroups $P$ and $Q$ of $G$. We will denote by $U$ and $V$ the unipotent radicals of $P$ and $Q$, and the Levi quotients will be denoted $L=P/U$, $M=Q/V$, respectively. The corresponding Lie algebras will be denoted by lower case fraktur letters as usual; thus the Lie algebras of $G$, $P$, $Q$, $U$, $V$, $L$, $M$ shall be denoted $\fg$, $\fp$, ,$\fq$, $\fu$, $\fv$, $\fl$, $\fm$ respectively. 
	
	Recall that an algebraic group acts on its Lie algebra by the adjoint action. For ease of reading we will denote the adjoint quotient stacks with an underline as follows: $\fg/G = \ug$, $\fp/P =\up$, $\fl/L =\ul$, etc. We have a diagram of stacks
	\begin{equation}\label{diagramgs}
	\xymatrix{
		\ug & \ar[l]_r \up \ar[r]^s & \ul.
	}
	\end{equation}
	Of course, there is an analogous diagram involving $\uq$ and $\um$. The fiber product $\uq \times_{\ug} \up$ will be denoted by $\St QP$ and referred to as the \emph{Steinberg stack}. It is equipped with projections 
	\[
	\xymatrix{
		\um & \ar[l]_\alpha \St QP \ar[r]^\beta & \ul.
	}
	\]
	Explicitly we may write the Steinberg stack as a quotient
	\[
	\left\{ (x,g) \in \fg \times G \mid x\in \fq \cap \ls{g^{-1}}{\fp} \right\}/(Q\times P),
	\] 
	where the $(q,p) \in Q\times P$ acts by sending $(x,g)$ to $(\ls qx, qgp^{-1})$. In this realization, the morphisms $\alpha$ and $\beta$ are given by $\alpha(x,g) = (x + \fv) \in \fm$ and $\beta(x,g) = (\ls gx + \fu) \in \fl$. The Steinberg stack is stratified by the (finitely many) orbits of $Q\times P$ on $G$ and all of the strata have the same dimension. For each orbit $w$ in $\quot QGP$, we denote by $\Stw QPw$ the corresponding strata in $\St QP$.
	\begin{lemma}
		Given any lift $\dot{w} \in G$ of $w$, we have an equivalence of stacks 
		\[
		\Stw QPw \simeq (\fq \cap \ls \dw\fp) \adjquot (Q\cap \ls \dw P).
		\]
	\end{lemma}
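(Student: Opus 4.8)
The plan is the standard orbit--stabilizer (slice) reduction applied to the $(Q\times P)$-action in the given presentation of the Steinberg stack.

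First I would identify the stratum explicitly. In the presentation $\St QP = \{(x,g)\in\fg\times G : x\in\fq\cap\ls{g}{\fp}\}/(Q\times P)$, the assignment $(x,g)\mapsto QgP$ is visibly $(Q\times P)$-invariant (the group acts on the $G$-coordinate through $g\mapsto qgp^{-1}$), and so defines the map $\St QP \to \quot QGP$ to the finite set of orbits; the stratum $\Stw QPw$ is by definition the preimage of the point $w$, i.e. the locus where $g\in Q\dw P$. Thus
\[
\Stw QPw = \big\{(x,g) : g\in Q\dw P,\ x\in \fq\cap\ls{g}{\fp}\big\}\big/(Q\times P).
\]

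Next I would slice at $g=\dw$. Since $Q\times P$ acts transitively on the $G$-coordinate locus $Q\dw P$, the quotient stack is equivalently presented by the fiber over $g=\dw$ together with its residual symmetry group $S := \{(q,p)\in Q\times P : q\dw p^{-1}=\dw\}$. A direct computation gives $S=\{(\dw p\dw^{-1},p) : p\in P,\ \dw p\dw^{-1}\in Q\}$, and projection to the first factor is an isomorphism $S\isom Q\cap\ls{w}{P}$ (with inverse $q\mapsto(q,\dw^{-1}q\dw)$; conjugation by $\dw$ identifies this with the second-factor projection onto $P\cap\ls{w^{-1}}{Q}$). The fiber of the $x$-coordinate over $g=\dw$ is $\fq\cap\ls{\dw}{\fp}=\fq\cap\ls{w}{\fp}$, and the residual $S$-action on it is, through the isomorphism $S\cong Q\cap\ls{w}{P}$, simply $x\mapsto\ls{q}{x}$ --- the restriction of the adjoint action of $G$ on $\fg$. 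One checks this subspace is indeed stable: $\fq$ is $\Ad(Q)$-stable and $\ls{w}{\fp}=\Lie(\ls{w}{P})$ is $\Ad(\ls{w}{P})$-stable, hence their intersection is stable under $\Ad(Q\cap\ls{w}{P})$. Assembling these identifications yields
\[
\Stw QPw \simeq \big[(\fq\cap\ls{w}{\fp})\,/\,(Q\cap\ls{w}{P})\big] = (\fq\cap\ls{w}{\fp})\adjquot(Q\cap\ls{w}{P}).
\]

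The only point requiring genuine care, as opposed to bookkeeping, is the slice equivalence used in the second step at the level of \emph{stacks} rather than just on points: for an algebraic group $H$ acting on a space $Y$ and a point $y_0$ with locally closed orbit $O=Hy_0$ and stabilizer $H_{y_0}$, the natural map $[\,\mathrm{fib}_{y_0}/H_{y_0}\,]\to[\,(\text{preimage of }O)/H\,]$ is an equivalence; here it is applied fiberwise in the $x$-coordinate over the $H=Q\times P$-orbit $O=Q\dw P\subseteq G$. I would also record independence of the chosen lift: replacing $\dw$ by $q_0\dw p_0^{-1}$ conjugates $S$ and the slice compatibly, producing a canonical isomorphism between the two presentations, so the resulting equivalence is canonical once $\dw$ is fixed. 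Everything else is the finite-group-style orbit--stabilizer computation carried out above.
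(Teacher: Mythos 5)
Your argument is correct and is precisely the orbit--stabilizer (slice) reduction that the paper leaves implicit; the remark immediately following the lemma, which attributes equidimensionality of the strata to the orbit--stabilizer theorem, confirms this is the intended route. Worth recording: you have silently replaced the paper's displayed condition $x\in\fq\cap\ls{g^{-1}}{\fp}$ with $x\in\fq\cap\ls{g}{\fp}$, and yours is the internally consistent choice --- it is the one preserved by the stated action $(q,p)\cdot(x,g)=(\ls{q}{x},qgp^{-1})$, and the one for which the stabilizer of $\dw$ acts on the fiber through $Q\cap\ls{w}{P}$ exactly as in the lemma --- so your slice computation in effect corrects a sign typo in the paper's presentation of the Steinberg stack.
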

	\begin{remark}
		The stack $\St QP$ is the bundle of Lie algebras associated to the inertia stack of $\quot QGP$; the equidimensionality of the stratification may be seen as a consequence of the orbit stabilizer theorem.
	\end{remark}
	
	\begin{remark}
		The morphism $s:\up \to \ul$ is not representable, however it is \emph{safe}, in the sense of Appendix \ref{appendixdmodfun}. Note that $s$ factorizes as
		\[
		\up = \fp/P \to \fl/P \to \fl/L = \ul.
		\]
		The first morphism is representable, and the second morphism gives rise to a derived equivalence of $D$-modules (but the $t$-structure is shifted). The benefit of working with such non-representable morphisms is that the shifts that usually appear in the definitions of parabolic induction and restriction are naturally encoded in our definition.
	\end{remark}
	
	To better understand Diagram \ref{diagramgs} and the Steinberg stack, let us consider the following spaces. The \emph{$P$-flag variety}, $\Fl{\fp} \simeq G/P$, is defined to be the collection of conjugates of $\fp$ in $\fg$ with its natural structure of a projective algebraic variety. The flag variety carries a tautological bundle of Lie algebras,
	\[
	\GS{P} = \left\{ (x, \fp^\prime) \in \fg \times \Fl \fp \mid x\in \fp^\prime \right\} \too \Fl{P}.
	\]
	The bundle of Lie algebras $\GS{P}$ has a naturally defined ideal $\GS{U}$ (whose fibers are conjugates of $\fu$) and corresponding quotient $\GS{L}$. The group $G$ acts on $\GS{P}$ and $\GS{L}$ and we have natural identifications $\GS{P}/G = \fp/P = \up$ and $\GS{L}/G = \fl \adjquot P$. 
	
	Now we can reinterpret Diagram \ref{diagramgs} as follows:
	\[
	\xymatrix{
		\fg/G  \ar[d] & \ar[l]_\rho \GS{P}/G \ar[r]^{\sigma^\prime} & \GS{L}/G \ar[d]_{\sigma^{\prime\prime}} \\
		\ug \ar[u]^\wr& \ar[l]_r \up \ar[u]^\wr \ar[r]^s & \ul .
	}
	\]
	The morphism $\sigma^\prime$ is representable and smooth of relative dimension $\dim(\fu)$, whereas $\sigma^{\prime \prime}$ is smooth of relative dimension $-\dim(\fu)$. Thus the morphism $s$ is smooth of relative dimension $0$. Note that $\sigma^{\prime\prime}$ induces an equivalence on the category of $D$-modules, which preserves the $t$-structures up to a shift of $\dim(\fu)$. Thus the top row of the diagram induces the same functor on $D$-modules up to a shift.
	
	The Steinberg stack may be written in terms of the varieties $\GS{Q}$ and $\GS{P}$ as follows:
	\[
	\St QP = \left(\GS{Q}\times_\fg \GS{P} \right)/G = \left\{(x,\fq^\prime,\fp^\prime) \in \fg\times \Fl Q\times \Fl P \mid x\in \fp^\prime \cap \fq^\prime \right\}/G.
	\]
	
	\subsection{Parabolic induction and restriction}\label{ssparabolic}
	The functors of parabolic induction and restriction are defined by:
	\[
	\xymatrixcolsep{3pc}\xymatrix{
		r_\ast s^! = \IND^G_{P,L}: \bD(\ul) \ar@/^0.3pc/[r] &  \ar@/^0.3pc/[l] \bD(\ug):  
		\RES_{P,L}^G= s_\ast r^!
	}
	\]
	where the notation is as in Diagram \ref{diagramgs} above.
	The morphism $r$ is proper, and $s$ is smooth of relative
	dimension $0$, thus $\IND_{P,L}^G$ is left adjoint to $\RES^G_{P,L}$.
	
	\begin{example}
	Let us make the definitions a little bit more explicit in the case in the case $P=B$ (a Borel subgroup) and $H = B/N$ (the canonical Cartan). Parabolic induction of an object $\fM \in \bM(\fh)$ is just given by 
	\[
	\IND^G_{B,H}(\fM) \simeq (\widetilde{\fg} \to \fg)_\ast(\widetilde{\fg} \to \fh)^!(\fM)[-\dim (\fn)]
	\]
	On the other hand, given an object $\fN\in \bM(\ug)$ (i.e. a $G$-equivariant $\fD_\fg$-module), parabolic restriction (forgetting about equivariance for now) is given by a derived tensor product:
	\[
	\RES^G_{B,H}(\fN) \simeq \fN \bigotimes\limits_{\Sym(\fn)\otimes \Sym(\fn)} \C [-\dim (\fn)].
	\]
	Here, we identify $\fg^\ast$ with $\fg$ using an invariant form, so that $\cO_{\fg} = \Sym(\fg^\ast)$ becomes identified with $\Sym(\fg)$. Then $\Sym(\fn)\otimes\Sym(\fn)$ is a commutative subalgebra of $\fD_{\fg}$ (the first factor of $\Sym(\fn)$ is really functions on $\fn^-$, and the second is constant coefficient differential operators on $\fn$), and so the relative tensor product above is computed by a Koszul complex. Later, we will see that the functor $\RES^G_{P,L}$ is $t$-exact, which due to the shifts, means that the Koszul complex is non-zero only in the middle degree!
	\end{example}

	We define the functor 
	\[
	\ST = \lrsubsuper{M,Q}{G}{\ST}{P,L}{} := \RES^G_{Q, M} \IND^G_{P,L}: \bD(\ul) \to \bD(\um),
	\]
	(we will often drop the subscripts and/or superscripts when the context is clear).
	By base change, we have $\ST \simeq   \alpha_\ast \beta^!$, where:
	\[
	\xymatrix{
		&& \St QP \ar[ld] \ar[rd]  \ar@/_2pc/[lldd]_{\alpha} 
		\ar@/^2pc/[rrdd]^{\beta}&& \\
		& \uq  \ar[ld] \ar[rd]  & \Diamond & \up  \ar[ld] \ar[rd]&\\
		\um & & \ug & & \ul.
	}
	\]
	Recall that the stack $\St QP$ has a stratification indexed by double cosets $\quot QGP$, with strata $\Stw QPw$. Thus the functor $\ST$ has a filtration (in the sense of Definition \ref{definitionfiltration}) indexed by the poset $\quot QGP$ (we will refer to this filtration as the Mackey filtration). Let $\ST^w = \alpha_{w\ast} \beta_w^!$ denote the functor associated to $w\in \quot QGP$, where $\alpha_w$ (respectively $\beta_w$) denotes the restrictions of $\alpha$ (respectively $\beta$) to $\Stw QPw$.

	\subsection{The Mackey Formula}
	For each $g\in G$, consider the conjugate parabolic $\ls gP$ in $G$ and its Levi quotient $\ls gL$. The image of $Q\cap \ls gP$ in $M$ is a parabolic subgroup of $M$ which we will denote $M\cap \ls gP$. The corresponding Levi quotient will be denoted $M\cap \ls gL$. Similarly, the image of $Q\cap \ls gP$ in $\ls gL$ is a parabolic subgroup, denoted $Q\cap \ls gP$, whose Levi subgroup is canonically identified with $M\cap \ls gL$. Analogous notation will be adopted for the corresponding Lie algebras. The conjugation morphism $\ls{g}{(-)}: \ul \to \ls g\ul$ gives rise to an equivalence $g_\ast: \bD(\ul) \to \bD(\ls g \ul)$.
	
	\begin{remark}
		If we choose Levi splittings of $L$ in $P$ and $M$ in $Q$ such that $M\cap \ls gL$ contains a maximal torus of $G$, then the notation above may be interpreted literally.
	\end{remark}
	
	\begin{proposition}[Mackey Formula]\label{propositionmackey}
		For each lift $\dw \in G$ of $w \in \quot QGP$, there is a natural equivalence of functors
		\[
		\ST^w \simeq \IND _{M\cap \ls \dw P, M \cap \ls \dw L} ^{M} \RES^{\ls \dw L} _{Q\cap \ls \dw L, M \cap \ls \dw L}
		{\dw _\ast}: \bD(\ul) \to \bD(\um).
		\]
	\end{proposition}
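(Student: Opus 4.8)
The plan is to compute the $w$-th graded piece of the Mackey filtration on $\ST\simeq\alpha_\ast\beta^!$ and to recognise it as the composite $\IND^M\,\RES^{\ls\dw L}\,\dw_\ast$ appearing on the right-hand side. Since the Mackey filtration on $\ST=\alpha_\ast\beta^!$ is the one induced by the stratification of $\St QP$ by the double cosets $\quot QGP$, applying $\alpha_\ast$ to the recollement triangles of that stratification shows that its $w$-th associated graded is $\ST^w\simeq(\alpha\circ j_w)_\ast(\beta\circ j_w)^!=(\alpha_w)_\ast(\beta_w)^!$, where $j_w\colon\Stw QPw\hookrightarrow\St QP$ is the inclusion of the stratum and $\alpha_w,\beta_w$ are the restrictions of the Steinberg span to it. It therefore suffices to identify the span $\um\xleftarrow{\alpha_w}\Stw QPw\xrightarrow{\beta_w}\ul$ as the composite of the three spans defining $\IND^M_{M\cap\ls\dw P,\,M\cap\ls\dw L}$, $\RES^{\ls\dw L}_{Q\cap\ls\dw L,\,M\cap\ls\dw L}$ and $\dw_\ast$.

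Using the Lemma, identify $\Stw QPw\simeq(\fq\cap\ls\dw\fp)\adjquot(Q\cap\ls\dw P)$, and read off from $\alpha(x,g)=x+\fv$ and $\beta(x,g)=\ls gx+\fu$ what the restricted maps become: $\alpha_w$ sends $x\in\fq\cap\ls\dw\fp$ to its image $\bar x$ in $\fm$, which lies in the parabolic subalgebra $\fm\cap\ls\dw\fp$ of $\fm$ and is equivariant for the surjection $Q\cap\ls\dw P\twoheadrightarrow M\cap\ls\dw P$; dually, after transport through $\dw$, the map $\beta_w$ sends $x$ to its image in $\ls\dw\fl$, which lies in the parabolic subalgebra $\fq\cap\ls\dw\fl$ of $\ls\dw\fl$. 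Hence $\alpha_w$ factors as $\Stw QPw\to\underline{\fm\cap\ls\dw\fp}\to\um$ and $\beta_w$ as $\Stw QPw\to\underline{\fq\cap\ls\dw\fl}\to\underline{\ls\dw\fl}\isom\ul$, where the final arrows are respectively the span map of $\IND^M$, the span map of $\RES^{\ls\dw L}$, and the conjugation equivalence underlying $\dw_\ast$; moreover both $\underline{\fm\cap\ls\dw\fp}$ and $\underline{\fq\cap\ls\dw\fl}$ project onto the common Levi corner $\underline{\fm\cap\ls\dw\fl}$.

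The crux of the argument is then the identification
\[
\Stw QPw\ \simeq\ \underline{\fm\cap\ls\dw\fp}\ \times_{\underline{\fm\cap\ls\dw\fl}}\ \underline{\fq\cap\ls\dw\fl}
\]
compatibly with the maps to $\um$ and (via $\dw$) to $\ul$, which is exactly the assertion that $\Stw QPw$ is the composite of the three spans above. This is a purely group-theoretic fact about parabolics in relative position: one verifies that $Q\cap\ls\dw P$ is recovered as the fibre product of its images in $M$ and in $\ls\dw L$ over $M\cap\ls\dw L$, and similarly for the Lie-algebra slices $\fq\cap\ls\dw\fp$, $\fm\cap\ls\dw\fp$, $\fq\cap\ls\dw\fl$, $\fm\cap\ls\dw\fl$, with $\fv\cap\ls\dw\fp$ and $\fq\cap\ls\dw\fu$ ($\ls\dw\fu$ being the nilradical of $\ls\dw\fp$) accounting for the two unipotent directions. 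Granting this, base change along the fibre-product square --- valid because $\underline{\fm\cap\ls\dw\fp}\to\underline{\fm\cap\ls\dw\fl}$ is safe of relative dimension $0$ and $\underline{\fq\cap\ls\dw\fl}\to\underline{\ls\dw\fl}$ is proper --- rewrites $(\alpha_w)_\ast(\beta_w)^!$ as $\IND^M_{M\cap\ls\dw P,\,M\cap\ls\dw L}\,\RES^{\ls\dw L}_{Q\cap\ls\dw L,\,M\cap\ls\dw L}\,\dw_\ast$, proving the Proposition.

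The step I expect to be the main obstacle is this fibre-product identification. The parabolic combinatorics involved is classical (it is the same input as in the Mackey formula for induced representations), but assembling it into an equivalence of quotient stacks requires care: one must keep the conjugates straight ($\dw$ versus $\dw^{-1}$), check that the two projections $\Stw QPw\rightrightarrows\underline{\fm\cap\ls\dw\fl}$ genuinely agree, and --- since the whole point of the non-representable, \emph{very safe} formalism is that no shifts are written down --- confirm that every morphism in the diagram has precisely the property (proper, or safe of relative dimension $0$) needed both to invoke base change and to make the spans compute $\IND$ and $\RES$ on the nose.
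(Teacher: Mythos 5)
Your overall strategy --- identify the $w$-th graded piece of the Mackey filtration as $(\alpha_w)_\ast(\beta_w)^!$, read off the restricted span, and express that span as the composite of the three spans on the right --- is the same one the paper uses. The difference is that the paper works through an explicit five-row commutative diagram with three cartesian squares and one \emph{non-cartesian} square, while you try to collapse this to a single fibre-product square, and the collapse introduces a genuine error at exactly the point you flagged as the main obstacle.

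The fibre-product identification
\[
\Stw QPw\ \simeq\ \underline{\fm\cap\ls\dw\fp}\ \times_{\underline{\fm\cap\ls\dw\fl}}\ \underline{\fq\cap\ls\dw\fl}
\]
is false as stated, and so is the group-theoretic claim you reduce it to. Working with compatible Levi splittings containing a common maximal torus, the intersection $Q\cap\ls\dw P$ is built from the four root-space pieces $M\cap\ls\dw L$, $M\cap\ls\dw U$, $V\cap\ls\dw L$, and $V\cap\ls\dw U$ (here $U$, $V$ are the unipotent radicals of $P$, $Q$). The fibre product $(M\cap\ls\dw P)\times_{M\cap\ls\dw L}(Q\cap\ls\dw L)$ assembles only the first three of these; the map
\[
Q\cap\ls\dw P\ \twoheadrightarrow\ (M\cap\ls\dw P)\times_{M\cap\ls\dw L}(Q\cap\ls\dw L)
\]
is a surjection with kernel $V\cap\ls\dw U$, not an isomorphism. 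The same defect appears on the Lie-algebra slices: $\fq\cap\ls\dw\fp$ contains the extra summand $\fv\cap\ls\dw\fu$ that the fibre product over $\fm\cap\ls\dw\fl$ does not. Your parenthetical remark that $\fv\cap\ls\dw\fp$ and $\fq\cap\ls\dw\fu$ ``account for the two unipotent directions'' is precisely where the double-counting is hidden: those two subspaces are not transverse --- they meet in $\fv\cap\ls\dw\fu$.

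The correct statement is weaker but sufficient: the natural morphism from $\Stw QPw$ to the fibre product is safe with unipotent kernel $V\cap\ls\dw U$ on groups and $\fv\cap\ls\dw\fu$ on the underlying spaces, hence smooth of relative dimension $\dim(\fv\cap\ls\dw\fu)-\dim(V\cap\ls\dw U)=0$, and therefore induces an equivalence on $D$-module categories compatible with both projections. Inserting that one extra comparison rescues your argument, and is in effect what the paper does: in its diagram the lowest square (whose four corners all have underlying space $\fm\cap\ls\dw\fl$ but differ in the quotienting group) is acknowledged to be non-cartesian, with the justification that the stacks ``only differ by trivial actions of unipotent groups'' so every arrow in that square is an equivalence, with the shift pinned down to zero by commutativity. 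So: fix the fibre-product claim to a ``safe equivalence'' claim, verify relative dimension zero, and the rest of your argument goes through.
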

	\begin{proof}
		Recall that $\Stw QPw \simeq (\fq \cap \ls w\fp) \adjquot (Q\cap \ls wP)$. Consider the commutative diagram:
		\begin{align}\label{diagramintersect}
		\xymatrix{
			&&& \frac{\fq \cap \ls w\fp}{Q \cap \ls wP} \ar[ld] \ar[rd]
			\ar@/_3pc/[lllddd]_{\alpha_w}  \ar@/^3pc/[rrrrddd]^{\beta_w}&&&&\\
			&& \ar[rd]\ar[ld] \frac{\fm \cap \ls w\fp}{Q \cap \ls wP} & \Diamond &  \ar[ld]
			\frac{\fq \cap \ls w\fl}{Q \cap \ls wP} \ar[rd]&&&\\
			&\frac{\fm\cap \ls w\fp}{M\cap \ls wP} \ar[ld]\ar[rd]& \Diamond &  \frac{\fm \cap \ls
				w\fl}{Q \cap \ls wP} \ar[ld] \ar[rd]&\Diamond & \frac{\fq\cap \ls w\fl}{Q\cap \ls
				wL}\ar[ld] \ar[rd]&&\\
			\frac{\fm}{M} &&\frac{\fm \cap \ls w\fl}{M\cap \ls wP} \ar[rd]&  &  \frac{\fm \cap \ls
				w\fl}{Q \cap \ls wL} \ar[ld]  & &\frac{\ls w\fl}{\ls wL} & \frac \fl L \ar[l]^\sim _w\\
			&   &   & \frac{\fm \cap \ls w\fl}{M\cap \ls wL} &   &   &   & . 
		}
		\end{align}
		Traversing the correspondence along the top of the diagram (from right to left)
		gives the functor $\ST^w$, whereas traversing the two correspondences along the
		very bottom of the diagram gives the composite $\IND _{M\cap \ls \dw P, M \cap \ls \dw L } ^{M} \circ
		\RES^{\ls wL} _{Q\cap \ls \dw L, M \cap \ls \dw L } \circ \dw _\ast$. The three squares marked
		$\Diamond$ are all cartesian. The lowest square is not cartesian, however it is
		still true that the two functors $\bD(\fm\cap \ls w\fl)^{Q\cap \ls wL} \to \bD(\fm
		\cap \ls w\fl)^{M \cap \ls wP}$ obtained by either traversing the top or bottom of
		this square are naturally isomorphic (the base-change morphism is an isomorphism). Indeed,
		the stacks only differ by trivial actions of unipotent groups, thus all the $D$-module functors in the square are equivalences, differing at worst by a cohomological shift. The fact that the shift is
		trivial follows just from the commutativity of the diagram. Thus the entire
		diagram behaves as if it were a base change diagram as required.
	\end{proof}
	
	\subsection{Compatibility under repeated induction or restriction}
	Suppose $P'$ is a parabolic subgroup of $G$ which contains $P$. Then $L' := P'/U'$ has a parabolic subgroup $L'\cap P$ and the corresponding Levi quotient is canonically isomorphic to $L$ (one can either choose a splitting of $L'$ in $P'$, or just define $L'\cap P$ to be the image of $P$ in $L'$). 
		\begin{proposition}\label{propositionindresfactor}
		The functors of parabolic induction and restriction factor as follows:
		\begin{align*}
		\RES^G_{P,L} \simeq \RES^{L'}_{P\cap L', L}\RES^G_{P',L'} \\
		\IND^G_{P,L} \simeq \IND^G_{P',L'}\IND^{L'}_{P\cap L', L}
		\end{align*}
	\end{proposition}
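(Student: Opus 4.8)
The plan is to prove the factorization for $\RES$ first and then obtain the one for $\IND$ --- either formally, by passing to left adjoints (recall from Subsection~\ref{ssparabolic} that $\IND^G_{P,L}$ is left adjoint to $\RES^G_{P,L}$), or by a parallel argument. Conceptually, each side of the first identity is a pull--push functor along a correspondence of adjoint quotient stacks, and $\RES^{L'}_{P\cap L',L}\circ\RES^G_{P',L'}$ is computed by the composite of the two correspondences; the task is then to identify that composite correspondence with the one defining $\RES^G_{P,L}$. For notation: since $P\subseteq P'$ we have $U'\subseteq U\subseteq P$, so $\fu'=\Lie(U')$ is an ideal of $\fp'$ contained in $\fp$, and (as recalled in the preamble) $P\cap L'\cong P/U'$ is a parabolic subgroup of $L'=P'/U'$ with unipotent radical $U/U'$ and Levi quotient $P/U=L$. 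Thus $(L',\,P\cap L',\,L)$ gives rise --- exactly as in Diagram~\ref{diagramgs}, with $L'$ in place of $G$ --- to a correspondence $\underline{\fl'}\xleftarrow{r''}\underline{\fp\cap\fl'}\xrightarrow{s''}\ul$ computing $\RES^{L'}_{P\cap L',L}=s''_\ast(r'')^!$, where $\underline{\fp\cap\fl'}$ is the adjoint quotient of $P\cap L'$ acting on $\fp\cap\fl'=\fp/\fu'\subseteq\fl'$; write $r'\colon\underline{\fp'}\to\ug$ and $s'\colon\underline{\fp'}\to\underline{\fl'}$ for the structure maps of the analogous correspondence computing $\RES^G_{P',L'}$.

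The key step is the following. The inclusions $\fp\subseteq\fp'$, $P\subseteq P'$ and the quotient maps $\fp\to\fp/\fu'$, $P\to P/U'$ induce a commuting square
\[
\xymatrix{
\up \ar[r]^{\pi_1} \ar[d]_{\pi_2} & \underline{\fp'} \ar[d]^{s'} \\
\underline{\fp\cap\fl'} \ar[r]^{r''} & \underline{\fl'}
}
\]
in which, unwinding the definitions, $r'\circ\pi_1=r$ and $s''\circ\pi_2=s$ are the structure maps of Diagram~\ref{diagramgs}; I claim this square is \emph{cartesian}, i.e.\ $\up\simeq\underline{\fp'}\times_{\underline{\fl'}}\underline{\fp\cap\fl'}$. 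The cleanest verification is to base change along the smooth atlas $\fp'\to\underline{\fp'}$ and to use the canonical isomorphism $P'/P\cong L'/(P\cap L')$ (valid because $U'\subseteq P$). More concretely, a point of the fiber product is a triple $x'\in\fp'$, $x\in\fp/\fu'$, $g\in L'$ with $g\cdot(x'\bmod\fu')=x$; choosing any lift $\tilde g\in P'$ of $g$, one has $\tilde g\cdot x'\equiv x\pmod{\fu'}$, hence $\tilde g\cdot x'\in\fp+\fu'=\fp$, and $[\tilde g\cdot x']\in\up$ is the point in question, independent of the choice of $\tilde g$ because $U'\trianglelefteq P'$ and $U'\subseteq P$. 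Unlike the lowest square in the proof of Proposition~\ref{propositionmackey}, no unipotent correction term is needed here: the square is honestly cartesian, as a consequence of the nesting $U'\subseteq U\subseteq P\subseteq P'$.

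Granting this, the conclusion is a routine application of base change for $D$-modules along the cartesian square --- valid on these quotient stacks even though $s'$ and $s''$ are non-representable, since they are very safe in the sense of Appendix~\ref{appendixdmodfun}. Base change gives $(r'')^!\,s'_\ast\simeq\pi_{2\ast}\pi_1^!$, and therefore
\begin{align*}
\RES^{L'}_{P\cap L',L}\RES^G_{P',L'} &= s''_\ast\,(r'')^!\,s'_\ast\,(r')^! \simeq s''_\ast\,\pi_{2\ast}\,\pi_1^!\,(r')^! \\
&= (s''\circ\pi_2)_\ast\,(r'\circ\pi_1)^! = s_\ast\,r^! = \RES^G_{P,L}.
\end{align*}
Reading base change for the same square in the other direction gives $(s')^!\,r''_\ast\simeq\pi_{1\ast}\pi_2^!$, and substituting this into $\IND^G_{P',L'}\IND^{L'}_{P\cap L',L}=r'_\ast\,(s')^!\,r''_\ast\,(s'')^!$ yields
\[
\IND^G_{P',L'}\IND^{L'}_{P\cap L',L}\simeq(r'\circ\pi_1)_\ast\,(s''\circ\pi_2)^!=r_\ast\,s^!=\IND^G_{P,L};
\]
alternatively, the $\IND$ factorization follows from the $\RES$ one by passing to left adjoints. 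The one genuinely delicate point in the argument is the cartesianness of the middle square, which reduces to careful bookkeeping of the unipotent radicals involved ($U'\trianglelefteq P'$ and $U'\subseteq P$); everything after that is formal.
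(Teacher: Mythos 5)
Your proof is correct and follows essentially the same route as the paper's: identify the composite of the two restriction correspondences by showing that the square built from $\underline{\fp'}$ and $\underline{\fp\cap\fl'}$ over $\underline{\fl'}$ has fibre product $\up$, then conclude by base change (and obtain the $\IND$ factorization either by passing to adjoints or by reading the same cartesian square in the other direction). Your observation that this square is \emph{honestly} cartesian is a small, correct sharpening of the paper's claim that it is only ``cartesian modulo unipotent gerbes, as in the Mackey proof'': here $P/U'\hookrightarrow L'$ is injective, so the group fibre product $P'\times_{L'}(P/U')$ is literally $P$ and there is no excess unipotent factor (unlike the bottom square of Diagram~\ref{diagramintersect}, where both group maps are surjective Levi projections); the paper's weaker statement still suffices for base change, but your bookkeeping is cleaner.
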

\begin{proof}
	The appropriate diagram is now:
	\[
	\xymatrix{
		&& \up \ar[ld] \ar[rd]  \ar@/_2pc/[lldd]_{s} 
	\ar@/^2pc/[rrdd]^{r}&& \\
	& \up  \ar[ld] \ar[rd]  &  & \underline{\fp\cap\fl'}  \ar[ld] \ar[rd]&\\
	\ug & & \ul' & & \ul.
}
	\]
	As in the proof of Proposition \ref{propositionmackey}, the middle square is cartesian modulo unipotent gerbes. Thus the result follows from base change.
\end{proof}

	\section{Geometry of the adjoint quotient and commuting variety}\label{sectionreg}
	In this section, we will study certain loci of $\fg$ and the variety $\comm(\fg)$ of commuting elements of $\fg$. In the case $G=GL_n$ these loci record linear algebraic data concerning coincidences amongst the eigenvalues of matrices and simultaneous eigenvalues of commuting matrices.
	
	\subsection{The Lusztig partition of $\fg$}\label{subsectionpartition}
For each $x\in \fg$, let $H(x) = H_G(x) = C_G(x_s)$. This is a Levi subgroup of $G$.

\begin{definition}
 For each Levi subgroup $L$ of $G$, we write:
	\[
	\fg_{(L)} = \left\{ x\in \fg \mid \text{$H_G(x)$ is conjugate to $L$} \right\}.
	\]
\end{definition}
We will write $\fg_{\heartsuit}$ for the subset $\fg_{(G)}$; this is referred to as the \emph{cuspidal locus}. It consists of elements $x$ whose semisimple part is central in $\fg$. In other words, $\fg_{\heartsuit} = \fz(\fg)+\cN_G$. At the other extreme, $\fg_{(H)} = \fg_{rs}$ is precisely the subset of regular semisimple elements of $\fg$.

	These subsets define a partition of $\fg$ in to $G$-invariant, locally closed subsets, indexed by conjugacy classes of Levi subgroups. More precisely, there is a continuous map
	\begin{align*}
	\fg & \to \Levi_G^{op} \\
	x   & \mapsto (H(x))   
	\end{align*}
	where $\Levi_G^{op}$ is the poset of conjugacy classes of Levi subgroups, and $(L) \leq (M)$ means that some conjugate of $L$ is contained in $M$. We will use the notation $\fg_{\nleq(L)}$, $\fg_{>(L)}$, etc. to denote the corresponding subsets of $\fg$.

	\begin{example}\label{examplegln}
		In the case $G=GL_n$, conjugacy classes of Levi subgroups are in one-to-one correspondence with partitions of $n$. Given a partition $p = (p_1, \ldots, p_k)$, we may take $L_p = GL_{p_1} \times \ldots \times GL_{p_k}$. Then $\fgl_{(L_p)}$ consists of matrices with $k$ distinct eigenvalues $\lambda_1, \ldots, \lambda_k$ such that the dimension of the generalized eigenspace corresponding to $\lambda_i$ is $p_i$.
	\end{example}

Note that the stratification of $\fg$ is $G$-invariant, and the question of which subset an element $x\in \fg$ belongs to depends only on the semisimple part of $x$ (for example in Example \ref{examplegln} for $GL_n$, we saw that the partition depends only on the eigenvalues). Thus we are really just defining a partition $\fc = \sqcup \fc_{(L)}$ of the space $\fc = \fg\GIT G$ of semisimple conjugacy classes, such that $\fg_{(L)}$ is the preimage of $\fc_{(L)}$ under the characteristic polynomial map $\chi: \fg \to \fc$. This partition can be described in explicit linear algebraic (or combinatorial) terms as follows.

Let $H$ be a maximal torus of $G$ with Weyl group $W$. There are finitely many Levi subgroups of $G$ which contain $H$: they correspond bijectively with certain root subsystems of the corresponding root system of $(G,H)$ (they are precisely those generated by a subset of simple roots for some choice of polarization of the root system). For a Levi subgroup $L$ containing $H$, consider the linear subspace $\fz(\fl) \subseteq \fh$. The maximal such subspaces (corresponding to the minimal Levi subgroups) are the root hyperplanes of $\fh$, and more general subspaces of the form $\fz(\fl)$ are intersections of root hyperplanes. The stratification of $\fc$ can be described in terms of the images of these subspaces under the quotient map $\pi:\fh \to \fc = \fh\GIT W$.  Given an element of $x\in \fg_{\geq (L)}$, we can conjugate so that $x_s\in \fz(\fl)$; thence $\chi(x) = \chi(x_s) = \pi(x_s)$. Thus we have a commuting diagram:
\[
\xymatrix{
\fh \ar[r] & \fc \ar@ {} [dr]|\Square & \fg \ar[l] \\
\fz(\fl) \ar@{^{(}->}[u] \ar[r] &\fc_{\geq(L)}  \ar@{^{(}->}[u]& \ar[l] \fg_{\geq(L)} \ar@{^{(}->}[u]
}
\]
(Note that the left hand square is not cartesian in general: $\pi^{-1}(\fc_{\geq(L)})$ consists of the union of $W$-translates of $\fz(\fl)$.) Similarly, $\fc_{>(L)}$ is the image under $\pi$ of the union of $\fz(\fl')$ for Levi subgroups $L'$ which contain $L$. Using these ideas, we observe the following:
\begin{proposition}
The closure of $\fg_{(L)}$ is $\fg_{\geq(L)}$.
\end{proposition}
\begin{proof}
As the map $\chi:\fg\to \fc$ is open, we have that the preimage $\chi^{-1}$ commutes with the operation of taking closure. Thus it is sufficient to check that $\fc_{\geq (L)}$ is the closure of $\fc_{(L)}$. Moreover, as the map $\pi:\fh \to \fc$ is closed, we have that the operation of taking the image under $\pi$ commutes with closure. Thus we are reduced to showing that $\fz(\fl)$ is the closure of the subvariety give by the complement of the union of proper linear subspaces $\fz(\fl')$. But the complement of a proper linear subspace in $\fz(\fl)$ is open and dense, and thus so is a finite intersection of such, which shows the required closure property in $\fh$. 
\end{proof}

\subsection{The regular locus in a Levi}
Closely related to the Lusztig partion is the notion of \emph{regularity}.

\begin{definition}
Let $L$ be a Levi subgroup of $G$. An element $x\in \fl$ is called \emph{regular} (or $G$-regular) if $H_G(x) \subseteq L$. The set of regular elements in $\fl$ is denoted $\fl^{G-\reg}$ or just $\fl^\reg$. It's complement will be denoted $\fl^\nonreg$.
\end{definition}

The intersection of the $G$-regular locus of $\fl$ with the cuspidal locus of $\fl$ is given by 
\[
\fl^\reg_\heartsuit := \fl^\reg \cap \fl_\heartsuit = \fz(\fl)^\reg + \cN_L
\]
and consists of elements $x\in \fl$ such that $H_G(x)=L$.
By construction, we have that $\fg_{(L)}$ is the $G$-saturation of $\fl^\reg_{\heartsuit}$, and $\fg_{\leq(L)}$ is the $G$-saturation of $\fl^\reg$. 

\begin{remark}
If $H$ is a maximal torus of $G$, then $\fh^\nonreg$ is the union of root hyperplanes in $\fh$. More generally, for a Levi subgroup $L$, $\fz(\fl)^\nonreg$ is the union of hyperplanes $\fz(\fm_1), \ldots, \fz(\fm_k)$, where $M_1,\ldots, M_k$ are the minimal Levi subgroups of $G$ which contain $L$. However, $\fl^\nonreg$ is not cut out by linear equations in $\fl$ in general.
\end{remark}
	
\begin{remark}
The Lusztig \emph{stratification} (see \cite{lusztig_cuspidal_1995}, Section 6) is the refinement of the Lusztig partition indexed by pairs $(L,\cO)$ of a Levi subgroups $L$ and a nilpotent orbit $\cO$ in $\fl$, up to $G$-conjugacy. Given an element $x\in \fg_{(L)}$, we may assume (after conjugating $x$) that $H_G(x)=L$, and thus $x_n$ is a nilpotent element of $\fl$; we say that $x\in \fg_{(L,\cO)}$ if $x_n\in \cO$. As $\fg_{(L)}$ is the $G$-saturation of $\fl_{\heartsuit}^\reg = \cN_L + \fz(\fl)^\reg$, we see that $\fg_{(L,\cO)}$ is the $G$-saturation of $\cO + \fz(\fl)^\reg$.
\end{remark}

\begin{example}\label{examplestratificationgln}
 Continuing Example \ref{examplegln} for $G=GL_n$, note that partitions $q=(q_1,\ldots , q_\ell)$ which refine the partition $p$ index nilpotent orbits $\cO_q$ of $L_p$; the corresponding Lusztig stratum $\fgl_{(L_p,\cO_q)}$ consists of matrices with $k$ distinct eigenvalues with the dimension of generalized eigenspaces according to the partition $p$, and the size of the Jordan blocks prescribed by the refined partition $q$.
\end{example}

\subsection{\'Etale maps and Galois covers}\label{sec:etale-maps-and-galois-covers}
Here we record some results about the restriction of the diagram $\ug \leftarrow \up \rightarrow \ul$ to the various loci defined above. These results (or rather their analogues in the group setting) may be found in Lusztig \cite{lusztig_intersection_1984}, but we include proofs for completeness.

Let us first observe the following preliminary result.
	\begin{lemma}\label{lemmaregular}
		Let $x\in \fl$. The following are equivalent:
		\begin{enumerate}
			\item $x \in \fl^\reg$
			\item $C_\fg (x_s) \subseteq \fl$
			\item $C_\fg (x) \subseteq \fl$
		\end{enumerate}
	\end{lemma}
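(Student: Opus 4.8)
The plan is to prove the cycle of implications $(1) \Rightarrow (2) \Rightarrow (3) \Rightarrow (1)$, using standard facts about centralizers and the Jordan decomposition. Write $x = x_s + x_n$ for the Jordan decomposition in $\fl$ (which agrees with the one in $\fg$, since $\fl$ is an algebraic subalgebra). Throughout, recall that $\fl^\reg$ is defined by the condition $H_G(x) = C_G(x_s) \subseteq L$, and that $C_\fg(x_s) = \Lie(C_G(x_s))$ since $x_s$ is semisimple (the centralizer of a semisimple element is reductive and its Lie algebra is computed by the $0$-weight space of $\ad(x_s)$, which is exactly $C_\fg(x_s)$).

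For $(1) \Rightarrow (2)$: if $H_G(x) = C_G(x_s) \subseteq L$, then passing to Lie algebras gives $C_\fg(x_s) = \Lie(C_G(x_s)) \subseteq \Lie(L) = \fl$. For $(2) \Rightarrow (3)$: this is immediate from the containment $C_\fg(x) \subseteq C_\fg(x_s)$, which holds because $x_s$ is a polynomial in $x$ (more precisely, $x_s$ lies in the closure of $\{p(x)\}$, or one uses that $C_\fg(x) \subseteq C_\fg(x_s) \cap C_\fg(x_n)$ by uniqueness and functoriality of the Jordan decomposition — anything commuting with $x$ commutes with both $x_s$ and $x_n$). For $(3) \Rightarrow (1)$: this is the step requiring the most care. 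Suppose $C_\fg(x) \subseteq \fl$; we must show $C_G(x_s) \subseteq L$. The key point is that $C_G(x_s)$ is a connected reductive group (centralizer of a semisimple element in a connected group — here I would invoke connectedness of $G$, or note that in the relevant setting centralizers of semisimple elements are connected), so it is determined by its Lie algebra $C_\fg(x_s)$ together with a maximal torus, hence it suffices to show $C_\fg(x_s) \subseteq \fl$. Now $x_n$ is a nilpotent element of the reductive Lie algebra $C_\fg(x_s)$, and by the Jacobson–Morozov theorem we may complete $x_n$ to an $\fsl_2$-triple $(e, h, f)$ inside $C_\fg(x_s)$ with $e = x_n$. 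Then $h \in C_\fg(x_s)$ and $[h, x_n] = 2 x_n \neq 0$ unless $x_n = 0$; the element $x_s + \epsilon h$-type arguments, or rather: consider the one-parameter subgroup generated by $h$. Actually the cleanest route: the centralizer $C_\fg(x) = C_{C_\fg(x_s)}(x_n)$, and for a nilpotent $x_n$ in a reductive Lie algebra $\fm := C_\fg(x_s)$ one has that $\fz(\fm) \subseteq C_\fm(x_n) = C_\fg(x)$ (the center of $\fm$ centralizes everything). Hence $\fz(C_\fg(x_s)) \subseteq C_\fg(x) \subseteq \fl$. But $x_s \in \fz(C_\fg(x_s))$, and in fact $C_\fg(x_s)$ is precisely the set of elements of $\fg$ whose centralizer-behavior pins down $L$: more usefully, $C_\fg(x_s) = C_\fg(\fz(C_\fg(x_s)))$ is a Levi subalgebra (it is the centralizer of a torus, namely the connected center of $C_G(x_s)$), so if its center lies in $\fl$ then $C_\fg(x_s) = C_\fg(\fz(C_\fg(x_s))) \supseteq C_\fg(\fl) $... — rather, since $C_\fg(x_s)$ is the centralizer in $\fg$ of the torus $\fz^\circ(C_\fg(x_s))$, and this torus is contained in $\fl$, and $\fl$ itself is a Levi (centralizer of its own central torus), one concludes $C_\fg(x_s) \supseteq \fl$ is not what we want — we want the reverse. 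Let me instead argue: $L$ is a Levi, so $L = C_G(S)$ for $S := Z^\circ(L)$ a torus with $\Lie(S) = \fz(\fl)$ (using that $\fl$ is self-centralizing among Levis). The torus $S' := Z^\circ(C_G(x_s))$ satisfies $\Lie(S') = \fz(C_\fg(x_s)) \subseteq C_\fg(x) \subseteq \fl$, so $S' \subseteq L$ (up to connecting components / using $S'$ connected and $L$ closed), hence $x_s \in \Lie(S') \subseteq \fz(\fl) \subseteq \fl$ and moreover $C_G(x_s) = C_G(S') \supseteq C_G(\fl)$. To get $C_G(x_s) \subseteq L$, note $S = Z^\circ(L) \subseteq C_G(x_s)$ would follow once we know... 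Hmm — the cleanest statement: $C_G(x_s) \subseteq L$ iff $S = Z^\circ(L) \subseteq Z(C_G(x_s))$, equivalently $\fz(\fl) \subseteq \fz(C_\fg(x_s))$, equivalently (taking centralizers in $\fg$, and using that both are Levi subalgebras equal to the centralizer of their center) $C_\fg(x_s) \subseteq C_\fg(\fz(\fl)) = \fl$. So the target $(1)$ is equivalent to $C_\fg(x_s) \subseteq \fl$, which is exactly $(2)$! Thus really $(1) \Leftrightarrow (2)$ is a formal statement about Levi subalgebras being centralizers of their centers, and the only substantive implication is $(3) \Rightarrow (2)$, i.e. $\fz(C_\fg(x_s)) \subseteq C_\fg(x_n)$, which is the trivial observation that the center of a Lie algebra centralizes all its elements, applied to $\fm = C_\fg(x_s)$ and $x_n \in \fm$.

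So the actual proof is short: $(1) \Leftrightarrow (2)$ because a Levi subalgebra is the centralizer in $\fg$ of its center (apply this to $\fl$ and to $C_\fg(x_s) = \Lie C_G(x_s)$, the latter being a Levi since $x_s$ is semisimple), and $(2) \Rightarrow (3)$ is the containment $C_\fg(x) \subseteq C_\fg(x_s)$, while $(3) \Rightarrow (2)$ follows since $\fz(C_\fg(x_s)) \subseteq C_\fg(x)$ — because $x_n \in C_\fg(x_s)$ and the center of any Lie algebra centralizes it, so $\fz(C_\fg(x_s)) \subseteq C_\fg(x_n) \cap C_\fg(x_s) = C_\fg(x)$ — combined with the fact that $C_\fg(x_s)$ is the centralizer of $\fz(C_\fg(x_s))$, giving $C_\fg(x) \subseteq \fl \Rightarrow \fz(C_\fg(x_s)) \subseteq \fl \Rightarrow C_\fg(\fl) \subseteq C_\fg(\fz(C_\fg(x_s))) = C_\fg(x_s)$, and then $C_\fg(x_s) \subseteq \fl$ by applying $(1)\Leftrightarrow(2)$ in reverse. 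The main obstacle is bookkeeping: making sure every use of ``centralizer of a semisimple element is a Levi'' and ``a Levi is the centralizer of its center'' is deployed correctly at the group and Lie algebra levels, and correctly handling connectedness of centralizers of semisimple elements (which holds for $G$ reductive, possibly after recalling Steinberg's theorem or restricting attention to the simply connected case and then deducing the general one). None of the steps requires Jacobson–Morozov after all — the $\fsl_2$ machinery is a red herring; the proof is purely about torus-centralizers.
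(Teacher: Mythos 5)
Your handling of $(1)\Leftrightarrow(2)$ (Lie algebra and group level containments match because $C_G(x_s)$ is a Levi subgroup of $G$ with Lie algebra $C_\fg(x_s)$) and of $(2)\Rightarrow(3)$ (the containment $C_\fg(x)\subseteq C_\fg(x_s)$) are both correct, and you are right that Jacobson--Morozov is a red herring. The problem is the implication $(3)\Rightarrow(2)$.

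The only consequence of $(3)$ that you actually extract is $\fz(C_\fg(x_s))\subseteq C_\fg(x)\subseteq\fl$. Your chain then reads
\[
\fz(C_\fg(x_s))\subseteq\fl \;\Rightarrow\; C_\fg(\fl)\subseteq C_\fg(\fz(C_\fg(x_s)))=C_\fg(x_s) \;\Rightarrow\; C_\fg(x_s)\subseteq\fl,
\]
and it is the last arrow that has no justification. Note that $C_\fg(\fl)=\fz(\fl)$, so the middle statement is $\fz(\fl)\subseteq C_\fg(x_s)$ --- which is automatically true (because $x_s\in\fl$ centralizes $\fz(\fl)$) and contains no information about whether $C_\fg(x_s)\subseteq\fl$. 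The phrase ``applying $(1)\Leftrightarrow(2)$ in reverse'' does not close this gap: that equivalence translates between $C_G(x_s)\subseteq L$ and $C_\fg(x_s)\subseteq\fl$, neither of which is available yet. More fundamentally, the inference ``$\fz(\fm)\subseteq\fl \Rightarrow \fm\subseteq\fl$ for a Levi $\fm$'' is simply false: take $\fm=\fg$ semisimple, whose center is $0\subseteq\fl$ for every $\fl$. (This scenario, $C_\fg(x_s)=\fg$, i.e.\ $x$ nilpotent, is indeed excluded by hypothesis $(3)$ --- but your argument never uses $(3)$ beyond the weak consequence $\fz(C_\fg(x_s))\subseteq\fl$, so it cannot distinguish these cases.) What you would actually need is the \emph{opposite} inclusion $\fz(\fl)\subseteq\fz(C_\fg(x_s))$, which does imply $C_\fg(x_s)=C_\fg(\fz(C_\fg(x_s)))\subseteq C_\fg(\fz(\fl))=\fl$, but you never establish it.

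The paper's argument for $(2)\Leftrightarrow(3)$ takes a different and sharper route: choose an invariant form and write $\fg=\fl\oplus\fl^\perp$. Then $(2)$ is the statement that $\ad(x_s)|_{\fl^\perp}$ is non-singular, and $(3)$ is the statement that $\ad(x)|_{\fl^\perp}$ is non-singular (both restrictions make sense because $\fl^\perp$ is $\ad(x)$-invariant, and the Jordan decomposition restricts to invariant subspaces). The equivalence is then the elementary fact that a linear operator is non-singular iff its semisimple part is, since they share a characteristic polynomial. This exploits the full force of $(3)$ --- that the nilpotent perturbation $\ad(x_n)|_{\fl^\perp}$ cannot create new kernel --- whereas the center-of-the-centralizer observation discards exactly this information.
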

	\begin{proof}
		Recall that $x\in \fl^\reg$ means that $C_G(x_s) \subseteq L$. It is standard that this is equivalent to (2): the centralizer $C_G(x_s)$ is a Levi subgroup, and $C_\fg(x_s)$ is its Lie algebra. 
		
		Now suppose $x\in \fg$ and consider $\alpha = \ad(x): \fg \to \fg$. Thus $C_\fg(x) = \ker(\alpha)$ and $C_\fg(x_s) = \ker(\alpha_s)$. Choose an invariant bilinear form, and write $\fg = \fl \oplus \fl^\perp$. Statement (2) is equivalent to $\ker(\alpha_s|_{\fl^\perp}) = 0$, whereas Statement (3) is equivalent to $\ker(\alpha|_{\fl^\perp}) = 0$. Thus the result follows from a standard fact in linear algebra, which we record in the following lemma.
	\end{proof}
	
	\begin{lemma}\label{lemmalinear}
		Suppose $\alpha \in \End(V)$ is a linear transformation of a finite dimensional vector space. Then $\alpha$ is invertible if and only if $\alpha_s$ is invertible.
	\end{lemma}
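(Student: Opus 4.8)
The plan is to reduce the statement about an arbitrary $\alpha \in \End(V)$ to a one-dimensional computation by working over the algebraic closure (the eigenvalues of $\alpha$ need not live in the ground field, but passing to $\bar k$ does not change whether a map is non-singular), and then exploiting the Jordan decomposition $\alpha = \alpha_s + \alpha_n$ with $[\alpha_s,\alpha_n]=0$. Recall that $\alpha_s$ and $\alpha_n$ are simultaneously triangulable, so in a suitable basis $\alpha$ is block upper triangular with the eigenvalues $\lambda_1, \ldots, \lambda_n$ of $\alpha$ on the diagonal, while $\alpha_s$ is the diagonal part $\mathrm{diag}(\lambda_1, \ldots, \lambda_n)$.

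The key steps, in order: First, observe that $\det(\alpha)$ equals the product of the diagonal entries of any upper triangular matrix representing it, hence $\det(\alpha) = \prod_i \lambda_i$. Second, by the same reasoning applied to $\alpha_s$, which is diagonalizable with the same eigenvalues and multiplicities, $\det(\alpha_s) = \prod_i \lambda_i$ as well. Therefore $\det(\alpha) = \det(\alpha_s)$, and $\alpha$ is non-singular if and only if this common determinant is non-zero, which is the case if and only if $\alpha_s$ is non-singular. Alternatively, and perhaps cleaner, one argues directly: $\alpha$ is non-singular iff $0$ is not an eigenvalue of $\alpha$, iff $0 \notin \{\lambda_1, \ldots, \lambda_n\}$, iff $0$ is not an eigenvalue of $\alpha_s$ (since $\alpha$ and $\alpha_s$ have the same eigenvalues, namely the roots of the characteristic polynomial), iff $\alpha_s$ is non-singular. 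A third route, worth mentioning for robustness: $\alpha_s$ is a polynomial in $\alpha$ with zero constant term plus a correction — more precisely, $\alpha_s = p(\alpha)$ for some polynomial $p$ with $p(0) = 0$ when $0$ is not an eigenvalue, so $\ker \alpha \subseteq \ker \alpha_s$; conversely $\alpha_n = \alpha - \alpha_s$ is nilpotent and commutes with $\alpha_s$, and on $\ker \alpha_s$ the map $\alpha$ restricts to the nilpotent $\alpha_n$, so $\ker \alpha_s \neq 0$ forces $\ker \alpha \neq 0$.

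I do not expect a genuine obstacle here: this is a standard linear algebra fact and the Jordan/eigenvalue argument is essentially immediate. The only point requiring a word of care is the passage to $\bar k$ to guarantee that $\alpha$ is triangulable and that its characteristic polynomial splits; I would state this at the outset, noting that non-singularity of a linear map (equivalently, non-vanishing of its determinant) is insensitive to field extension. With that remark in place, the cleanest writeup is the eigenvalue argument: the characteristic polynomial of $\alpha$ and of $\alpha_s$ coincide (both equal $\prod_i (t - \lambda_i)$), so $\det(\alpha) = (-1)^n \prod_i \lambda_i = \det(\alpha_s)$, and non-singularity of either is equivalent to all $\lambda_i$ being non-zero.
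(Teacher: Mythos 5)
The paper does not actually give a proof of this lemma: it is introduced immediately after the proof of Lemma~\ref{lemmaregular}, which simply says ``the result follows from a standard fact in linear algebra, which we record in the following lemma,'' and then the lemma is stated with no proof block. So there is nothing to compare against. Your argument is correct and complete; the eigenvalue/determinant version is the cleanest: over an algebraically closed field $\alpha$ and $\alpha_s$ are simultaneously triangulable with the same diagonal entries, hence have the same characteristic polynomial and the same determinant, and non-singularity of either is equivalent to $0$ not being an eigenvalue. (For the ambient paper the ground field is $\C$, so the reduction to $\bar k$ is not even needed.) Your third route also works and is arguably more structural: $\alpha_s$ is a polynomial in $\alpha$ with zero constant term, giving $\ker\alpha \subseteq \ker\alpha_s$, while on $\ker\alpha_s$ the restriction of $\alpha$ is the nilpotent $\alpha_n$, which has nontrivial kernel whenever $\ker\alpha_s \neq 0$; this yields both inclusions without passing through determinants.
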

	
	Given a parabolic $P$ containing $L$ as a Levi factor, we define $\fp^\reg$ to be the preimage of $\fl^\reg$ under the projection $\fp \to \fl$. 
	\begin{proposition}
		The natural morphism
		\[
		\up^\reg \to \ul^\reg
		\]
		is an equivalence.
	\end{proposition}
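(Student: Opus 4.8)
The plan is to reduce the statement to the assertion that the unipotent radical $U$ of $P$ acts simply transitively on the fibres of the projection $\fp^\reg \to \fl^\reg$; granting this, the equivalence of stacks is a purely formal manipulation of quotients. First I would fix a Levi splitting $\fl \hookrightarrow \fp$, so that $\fp = \fl \oplus \fu$, and observe that the projection $\pi\colon \fp \to \fl$ is $P$-equivariant for the action of $P$ on $\fl$ through $P \to L$: for $p = ul$ with $u \in U$, $l \in L$ one has $(\Ad(u)-1)(\fp) \subseteq \fu$, so $\pi(\Ad(p)(y)) = \Ad(l)(\pi(y))$. Restricting over $\fl^\reg$ gives a surjective $P$-equivariant morphism $\pi\colon \fp^\reg \to \fl^\reg$ (surjective because $\fp^\reg = \pi^{-1}(\fl^\reg)$ by definition), whose fibre over $\bar x$ is the affine space $\bar x + \fu$ and which admits the tautological section $\fl^\reg \hookrightarrow \fp^\reg$.

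The heart of the matter is to show that for $x \in \fl^\reg \subseteq \fp^\reg$ the orbit map $U \to \bar x + \fu$, $u \mapsto \Ad(u)(x)$, is an isomorphism. The stabiliser $U_x$ has Lie algebra $\{\xi \in \fu : [\xi,x]=0\} = \fu \cap C_\fg(x)$, which vanishes because $C_\fg(x) \subseteq \fl$ by Lemma \ref{lemmaregular} while $\fl \cap \fu = 0$; hence $U_x$ is finite, and therefore trivial since $U$ is unipotent and we are in characteristic zero. Thus the orbit $U\cdot x$ has dimension $\dim U = \dim \fu$; being the orbit of a unipotent group on an affine variety it is closed, so it is an irreducible closed subvariety of full dimension in the affine space $\bar x + \fu$ and must equal it. Hence $U \to \bar x + \fu$ is a bijective morphism of smooth varieties, so an isomorphism, and consequently the map $U \times \fl^\reg \isom \fp^\reg$, $(u,x)\mapsto \Ad(u)(x)$, is an isomorphism which is $U$-equivariant over $\fl^\reg$; in other words $\pi$ is the trivial $U$-torsor.

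It then remains to pass to quotient stacks. From $\fp^\reg \simeq U \times \fl^\reg$ we obtain $\fp^\reg/U \simeq \fl^\reg$, and a direct computation of the $P = U \rtimes L$ action in these coordinates shows that $U$ acts by left translation on the first factor and that the residual $L = P/U$ action on $\fl^\reg$ is the adjoint one; therefore $\up^\reg = \fp^\reg/P \simeq (\fp^\reg/U)/L \simeq \fl^\reg/L = \ul^\reg$, and tracing through the identifications shows that this equivalence is precisely the natural morphism. The only step where regularity enters — and the only genuine obstacle — is the vanishing of $\fu \cap C_\fg(x)$ for $x\in\fl^\reg$ (equivalently, the invertibility of $\ad(x)|_\fu$), which is immediate from Lemma \ref{lemmaregular}; everything else is bookkeeping. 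One could equally run the whole argument without choosing a splitting, working throughout with the intrinsic $U$-action on the fibres of $\fp^\reg \to \fl^\reg$.
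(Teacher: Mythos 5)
Your proof is correct and follows essentially the same strategy as the paper's: reduce to showing that $U$ acts simply transitively on the fibre $x + \fu$ for $x\in\fl^\reg$, with the key input being $C_\fg(x)\cap\fu = 0$ from Lemma \ref{lemmaregular}. The only genuine difference is in the final step establishing that the orbit map $U \to x+\fu$ is an isomorphism: you argue via triviality of the stabiliser (Lie algebra is $\fu\cap C_\fg(x)=0$, hence finite, hence trivial for unipotent $U$ in characteristic zero) together with closedness of unipotent orbits and dimension count, and then invoke that a bijective morphism of smooth varieties in characteristic zero is an isomorphism; the paper instead passes to the linearized orbit map $\fu\to\fu$, $\xi\mapsto[\xi,x]$, which is injective for the same reason and hence bijective by linear algebra, and then invokes that the exponential identifies $U$ with $\fu$. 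Both routes are standard and correct; yours is perhaps a touch more geometric, the paper's a touch more elementary. The stack bookkeeping at the end is also fine and matches what the paper leaves implicit.
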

	\begin{proof}
		The fibers of the morphism $\up \to \ul$ are the quotient $(x+ \fu)\adjquot U$. Thus, we need to show that that $U$ acts simply transitively on $x+\fu$, where $x\in \fl^\reg$ (here, we consider $\fl$ to be a subalgebra of $\fp$), or in other words, the following map is an isomorphism:
		\begin{align*}
		U & \to x + \fu     \\
		g & \mapsto \Ad(g)x 
		\end{align*}
		As $U$ is unipotent, it suffices to show that the the corresponding map
		\begin{align*}
		\fu & \to x + \fu        \\
		u   & \mapsto \ad(u)(x). 
		\end{align*}
		is an isomorphism If $\ad(u)(x) = \ad(v)(x)$, then $u-v$ is an element of $C_\fg(x) \cap \fu$, so must be zero. Thus the map is injective. Any injective morphism between affine spaces of the same dimension must be an isomorphism.
	\end{proof}

	\begin{proposition}\label{propositionetale}
		The morphism of stacks 
		\[
		d^\reg:\ul^\reg \simeq \up^\reg \to \ug
		\]
		is \'etale with image $\ug_{\leq(L)}$.
	\end{proposition}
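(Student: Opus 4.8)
The plan is to reduce the claim to an elementary statement about $\ad x$ for $x\in\fp^\reg$. First I would note that $\fg_{\leq(L)} = {}^G(\fl^\reg)$ is \emph{open} in $\fg$: the conjugation map $G\times\fl^\reg\to\fg$ has surjective differential at every $(g,y)$, since its image is $\Ad(g)([\fg,y]+\fl)$ and $([\fg,y]+\fl)^\perp = C_\fg(y)\cap\fl^\perp$, which vanishes because $C_\fg(y)\subseteq\fl$ by Lemma \ref{lemmaregular} and an invariant form is nondegenerate on the reductive subalgebra $\fl$ (so $\fl\cap\fl^\perp=0$). Hence this map is smooth, so its image $\fg_{\leq(L)}$ is open and $\ug_{\leq(L)}$ is a smooth open substack of $\ug$; the source $\up^\reg\simeq\ul^\reg$ is likewise smooth. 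Under the identification $[\fp^\reg/P]\simeq[(G\times^P\fp^\reg)/G]$, the morphism $d^\reg$ is induced by the $G$-equivariant map $q\colon G\times^P\fp^\reg\to\fg_{\leq(L)}$, $[g,x]\mapsto\Ad(g)x$, which is surjective since $\fg_{\leq(L)} = {}^G(\fp^\reg)$. As étaleness is tested after the smooth atlas $\fg_{\leq(L)}\to\ug_{\leq(L)}$, it suffices to show $q$ is étale.

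Both $G\times^P\fp^\reg$ and $\fg_{\leq(L)}$ are smooth of dimension $\dim G$, so it is enough that $dq$ be an isomorphism at each point; equivalently, passing to tangent complexes, that $\fp\hookrightarrow\fg$ induce a quasi-isomorphism $[\fp\xrightarrow{\ad x}\fp]\to[\fg\xrightarrow{\ad x}\fg]$ for every $x\in\fp^\reg$. On $H^{-1}$ this is $C_\fg(x)\cap\fp\to C_\fg(x)$, an isomorphism because $C_\fg(x)\subseteq\fp$ by Lemma \ref{lemmapreg}. On $H^0$ it is $\fp/[\fp,x]\to\fg/[\fg,x]$; taking orthogonal complements (for which $\fp^\perp=\fu$), surjectivity here is equivalent to $C_\fg(x)\cap\fu=0$, and injectivity is equivalent to the implication $[\xi,x]\in\fp\Rightarrow\xi\in\fp$, i.e.\ to $\ad x$ being injective on $\fg/\fp$, which is dual to $\ad x$ being surjective on $\fu$. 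So everything reduces to the claim that $\ad x\colon\fu\to\fu$ is bijective for all $x\in\fp^\reg$.

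This last claim is the one real point. To prove it I would filter $\fu$ by its lower central series $\fu=\fu_1\supseteq\fu_2\supseteq\cdots$; each $\fu_i$ is an ideal of $\fp$, so $\ad x$ preserves the filtration for any $x\in\fp$. Writing $x=x_\fl+x_\fu$ with $x_\fl\in\fl^\reg$ the image of $x$ in $\fl$ and $x_\fu\in\fu$, the operator $\ad x_\fu$ carries $\fu_i$ into $\fu_{i+1}$, so $\ad x$ and $\ad x_\fl$ induce the same operator on $\operatorname{gr}\fu$; since the filtration is finite, $\ad x|_\fu$ is bijective iff $\ad x_\fl|_\fu$ is. The latter holds because $\ker(\ad x_\fl|_\fu)=C_\fg(x_\fl)\cap\fu\subseteq\fl\cap\fu=0$, where $C_\fg(x_\fl)\subseteq\fl$ again by Lemma \ref{lemmaregular} since $x_\fl\in\fl^\reg$. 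Everything outside of this lower-central-series argument --- the identification of $d^\reg$ with $q$, the passage to tangent complexes, and the bookkeeping with an invariant form --- is formal, so I expect the filtration step to be where the content of the proposition lies.
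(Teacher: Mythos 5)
Your proof is correct, and the core mechanism is the same as the paper's: check that the relative tangent complex is acyclic, and reduce this to a nonsingularity statement for $\ad(x)$, which ultimately comes from Lemma~\ref{lemmaregular}. The difference is where you choose to test. The paper tests at a point $x\in\fl^\reg$ (using the already-established equivalence $\up^\reg\simeq\ul^\reg$ to reduce to such points), where $\ad(x)$ preserves the decomposition $\fg=\fl\oplus\fu\oplus\overline{\fu}$, so that acyclicity is immediately equivalent to $\ad(x)|_{\overline{\fu}}$ (equivalently $\ad(x)|_{\fu}$) being nonsingular --- and $\ker(\ad x)\cap\fl^\perp=0$ follows directly from Lemma~\ref{lemmaregular}. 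You instead test at an arbitrary $x\in\fp^\reg$, which forces you to handle the nilpotent perturbation $x_\fu$, hence the lower-central-series argument. That argument is correct, but it is a detour: since $\fp^\reg = \Ad(U)\fl^\reg$ (this is exactly what the proposition immediately preceding this one proves) and bijectivity of $\ad(x)|_\fu$ is conjugation-invariant, you could have reduced to $x\in\fl^\reg$ and skipped the filtration entirely. So your diagnosis that ``the filtration step is where the content lies'' is slightly off; the content the paper actually uses is just Lemma~\ref{lemmaregular}, and the filtration is extra work created by testing at a more general class of points. Your preliminary argument that $\fg_{\leq(L)}$ is open via surjectivity of the differential of the conjugation map is correct and a useful observation, though not logically necessary (openness of the image would also follow a posteriori from étaleness). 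One small point: for $x\in\fp^\reg$ the relevant linear-algebra reduction is really to the induced map $\ad(x)\colon\fg/\fp\to\fg/\fp$, not to $\ad(x)|_{\overline{\fu}}$ (which is not even defined since $\ad(x)$ need not preserve $\overline{\fu}$); you correctly pass to the dual statement about $\ad(x)|_\fu$, which is a cleaner formulation than the one in the paper.
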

	\begin{proof}
		\footnote{The idea behind this proof was communicated to me by Dragos Fratila.} A morphism of (derived) stacks is \'etale if and only if its relative tangent complex is acyclic. Given a point $x\in \fl^\reg$, the relative tangent complex at $x$ is given by the total complex of:
		\[
		\xymatrix{
			\fg & \ar[l] \fl \\
			\fg \ar[u]^{\ad(x)} & \ar[l] \fl \ar[u]_{\ad(x)}
		}
		\]
		(The columns are the tangent complex of $\ug$ and $\ul$ at the point $x$, and the horizontal maps is the map of complexes induced by $d:\ul \to \ug$.) We can write $\fg = \fp \oplus \fl^\perp$. The acyclicity of the complex is equivalent to the statement that $\ad(x)|_{\overline{\fu}}$ is invertible; this follows from the fact that $x$ is regular and Lemma \ref{lemmaregular}.
	\end{proof}
	\begin{remark}
		In more down-to-earth terminology, Proposition \ref{propositionetale} states that $\widetilde{\fg}_P \to \fg$ is \'etale over the regular locus.
	\end{remark}
	
	\begin{proposition}
		The map $d^\reg_\heartsuit: \ul^\reg_\heartsuit \simeq \up^\reg_\heartsuit \to \ug_{\geq (L)}$ is a $W_{G,L}$-Galois cover onto the open subset $\ug_{(L)}$.
	\end{proposition}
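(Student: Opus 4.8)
**Plan for proving that $d^\reg_\heartsuit: \ul^\reg_\heartsuit \simeq \up^\reg_\heartsuit \to \ug_{(L)}$ is a $W_{G,L}$-Galois cover.**

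The strategy is to build on Proposition \ref{propositionetale}, which already tells us that $d^\reg: \ul^\reg \to \ug_{\leq(L)}$ is \'etale; the claim here is a refinement over the cuspidal stratum. First I would record that, by construction, $\ug_{(L)} = \ls{G}{\left(\fl^\reg_\heartsuit\right)}/G$, so that $d^\reg_\heartsuit$ is surjective, and it is \'etale as the restriction of $d^\reg$ to the locally closed substack $\up^\reg_\heartsuit \subseteq \up^\reg$ (one checks $\up^\reg_\heartsuit$ is the preimage of $\ug_{(L)}$, using Lemma \ref{lemmapreg} and the definition $\fp_\heartsuit = \fl_\heartsuit + \fu$). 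So the content is to compute the fibers, or equivalently to show the map is finite of degree $|W_{G,L}|$ and Galois with that group.

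The heart of the argument is a fiber computation. Fix $x \in \fl^\reg_\heartsuit$, so $x = z + n$ with $z \in \fz(\fl)$ regular (i.e. $C_G(z) = L$) and $n \in \cN_L$; I would describe the groupoid of preimages of the $G$-orbit of $x$ under $d^\reg_\heartsuit$ as pairs $(\fp', y)$ with $\fp'$ a parabolic conjugate to $\fp$, $y \in \fp'$ lying over $x$ in the Levi quotient $\fl' = \fp'/\fu'$, modulo $G$. Because $x$ is $G$-regular, Lemma \ref{lemmapreg} forces $C_G(x_s) = C_G(z) \subseteq \fp'$, hence $L = C_G(z) \subseteq L'$ forces $L' = L$ after conjugation, i.e. the only parabolics $\fp'$ that can appear are those with Levi exactly $L$; and then $y$ and $x$ are $L$-conjugate by the $\up^\reg \simeq \ul^\reg$ equivalence. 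The upshot is that the fiber over $[x] \in \ug_{(L)}$ is identified with $\{g \in G : \ls{g}{L} = L\text{ (as subgroups, using the chosen splitting)}\}$ modulo the stabilizer, which is exactly $N_G(L)/L = W_{G,L}$ — here one uses that $C_G(x_s) = L$ so that the stabilizer of $x$ in $N_G(L)$ is $L$ itself (an element of $N_G(L)$ fixing $z$ must centralize it, hence lies in $C_G(z) \cap N_G(L) = L$). Combined with the \'etale-ness, this shows $d^\reg_\heartsuit$ is a finite \'etale map of degree $|W_{G,L}|$.

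To upgrade "finite \'etale of degree $|W_{G,L}|$" to "$W_{G,L}$-Galois", I would exhibit the $W_{G,L}$-action: $N_G(L)$ acts on $\fl^\reg$ (it normalizes $L$, hence acts on $\fl$, and preserves regularity and the cuspidal locus $\fl_\heartsuit = \fz(\fl) \times \cN_L$), $L$ acts trivially on the quotient stack $\ul = \fl/L$, so $W_{G,L} = N_G(L)/L$ acts on $\ul^\reg_\heartsuit$, and this action is clearly over $\ug_{(L)}$ since conjugation by $N_G(L)$ is absorbed into the $G$-action after applying $d$. The fiber computation above shows $W_{G,L}$ acts simply transitively on geometric fibers, which together with the established \'etale-ness is precisely the statement that $d^\reg_\heartsuit$ is a $W_{G,L}$-torsor, i.e. a $W_{G,L}$-Galois cover.

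The main obstacle I anticipate is bookkeeping the difference between working with parabolic \emph{subalgebras} $\fp' \subseteq \fg$ and working with a fixed splitting $L \subseteq P$: the clean identification of the fiber with $W_{G,L}$ requires being careful that two pairs $(\fp', y)$, $(\fp'', y')$ are $G$-equivalent exactly when they differ by an element of $N_G(L)$, and that no extra identifications or automorphisms sneak in from the unipotent radicals — this is where the hypothesis that $x$ is cuspidal-regular (so $C_G(x_s) = L$, not merely contained in $L$) does the real work, via Lemma \ref{lemmapreg} and the simple-transitivity of $U$ on $x + \fu$ established in the proof of the proposition preceding Lemma \ref{lemmapreg}. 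Everything else is a formal consequence of Proposition \ref{propositionetale} and descent.
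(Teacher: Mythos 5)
Your proof is correct and rests on the same two essential facts that drive the paper's argument: (i) every $x \in \fg_{(L)}$ is $G$-conjugate to an element of $\fl^\reg_\heartsuit$, and (ii) for $x \in \fl^\reg_\heartsuit$ the regularity forces $C_G(x_s) = L$, from which one reads off $C_G(x) = C_{N_G(L)}(x) = C_L(x)$ — the stabilizer identity that kills any extra automorphisms and makes the $W_{G,L}$-action free. The difference is in how the argument is packaged. The paper's proof is a one-step stack comparison: it forms the quotient $\ul^\reg_\heartsuit/W_{G,L} \simeq \fl^\reg_\heartsuit/N_G(L)$ and checks directly, using (i) and (ii), that the map to $\fg_{(L)}/G$ is an isomorphism of stacks; the Galois-cover (and hence étale) structure then comes along for free. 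You instead first invoke Proposition \ref{propositionetale} to get étaleness, verify that $\up^\reg_\heartsuit$ is the preimage of $\ug_{(L)}$ under $d^\reg$ (true, though it is an extra lemma not needed in the paper's route: for $x \in \fl^\reg$, being in $\fg_{(L)}$ is equivalent to $C_G(x_s) = L$, i.e.\ $x_s \in \fz(\fl)$), and then compute the geometric fibers by hand to exhibit them as $W_{G,L}$-torsors. Your version gives a more explicit picture of the fiber groupoid and may be helpful for a reader who prefers concrete orbit-stabilizer bookkeeping, at the cost of being longer and pulling in the étaleness result as an ingredient rather than a consequence.
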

	\begin{proof}
		The inclusion of $\fl_\heartsuit^\reg$ in to $\fg_{(L)}$ induces a morphism of stacks: 
		\[
		\ul_\heartsuit^\reg/W_{G,L} \simeq \fl_\heartsuit^\reg/N_G(L) \to \ug_{(L)} \simeq \fg_{(L)}/G
		\]
		The proposition is equivalent to the statement that this is an isomorphism. To see this, note that every $x\in \fg_{(L)}$ is conjugate to an element of $\fl^\reg_\heartsuit$. Moreover, if $x\in \fl_\heartsuit^\reg$, $C_G(x) = C_{N_G(L)}(x) = C_{L}(x)$, from which the result follows.
	\end{proof}

 The results of this subsection can be summarized in the following diagram:\footnote{Given a parabolic $P$ with Levi factor $L$, we define $\fp_\heartsuit$ to be the preimage $\fl_\heartsuit + \fu$.}
	\begin{proposition}\label{propositiondiagram}
	There is a commutative diagram:
	\begin{equation*}\label{diagramloci}
	\xymatrix{%
	%
		&\ug_{\geq(L)} \ar@{_{(}->}[dl]_-{\text{closed}}
	&&\ar@{->}[ll]\hole
	\up_\heartsuit
	\ar@{->}[rr]
	\ar@{_{(}->}[dl]
	&&\ul_\heartsuit\ar@{_{(}->}[dl]
	\\
		\ug&&\ar@{->}[ll]
		\up\ar@{->}[rr]
	&&\ul
	\\
	&\ug_{(L)}\ar@{_{(}->}[dl] 
\ar@{^{(}->}[uu]|!{[u];[u]}\hole 
&&\ar@{->}[ll]_(0.65){W_{G,L}}^(0.65){\text{Galois}}|!{[l];[l]}\hole
\up^\reg_{\heartsuit} \ar@{->}[rr]^(0.35){\sim}|!{[r];[r]}\hole
\ar@{_{(}->}[dl]\ar@{^{(}->}[uu]|!{[u];[u]}\hole
&&\ul^\reg_{\heartsuit}\ar@{_{(}->}[dl]\ar@{^{(}->}[uu]
	\\
	\ug_{\leq(L)} 
	\ar@{^{(}->}[uu]^{\text{open}}
&&\ar@{->}[ll]_{\text{\'etale}}
\up^\reg\ar@{^{(}->}[uu]
\ar@{->}[rr]^{\sim}
&&\ul^\reg \ar@{^{(}->}[uu]
	}
	\end{equation*}
	where:
	\begin{itemize}
	\item the vertical morphisms are open embeddings;
	\item the morphisms coming out of the page are closed embeddings; and
	\item all squares on the right hand side of the diagram, and all squares coming out of the page are cartesian.
	\end{itemize}
\end{proposition}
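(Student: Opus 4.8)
The plan is to read off each clause of the proposition directly from the definitions of the various loci, together with the three propositions just established: the isomorphism $\up^\reg\simeq\ul^\reg$, the fact that $d^\reg$ is \'etale (Proposition~\ref{propositionetale}), and the fact that $d^\reg_\heartsuit$ is a $W_{G,L}$-Galois cover. There are three things to check: that the maps fit into a commutative diagram, that the two indicated families of embeddings are open, resp.\ closed, and that the indicated squares are cartesian. Commutativity is automatic, since every horizontal arrow is induced by the inclusion $\fp\hookrightarrow\fg$ or by the projection $\fp\twoheadrightarrow\fl=\fp/\fu$ (restricted to a sublocus and passed to adjoint quotient stacks), and every other arrow is the inclusion of a smaller locus into a larger one inside a fixed ambient space; so nothing needs chasing beyond tracking which stack is which.

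For the embeddings I would argue locus by locus. By Lemma~\ref{lemmaregular}, $\fl^\reg=\{x\in\fl : C_\fg(x)\subseteq\fl\}$ is the non-vanishing locus of the polynomial $x\mapsto\det(\ad(x)|_{\fl^\perp})$ for an $\ad(\fl)$-stable splitting $\fg=\fl\oplus\fl^\perp$, hence open in $\fl$; the locus $\fl_\heartsuit=\fz(\fl)\times\cN_L$ is closed; and $\fl^\reg_\heartsuit=\fl^\reg\cap\fl_\heartsuit$ is therefore open in $\fl_\heartsuit$ and closed in $\fl^\reg$. Since $\fp^\reg$, $\fp_\heartsuit$ and $\fp^\reg_\heartsuit$ are, by definition, the preimages of $\fl^\reg$, $\fl_\heartsuit$ and $\fl^\reg_\heartsuit$ under $\fp\to\fl$, the same open/closed statements hold on $\fp$. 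Finally $\fg_{\leq(L)}$ is open and $\fg_{\geq(L)}=\overline{\fg_{(L)}}$ is closed in $\fg$ by the structure of the Lusztig partition recalled in Subsection~\ref{subsectionregular}, and $\fg_{(L)}=\fg_{\geq(L)}\cap\fg_{\leq(L)}$---using antisymmetry of $\Levi_G$, so that $(L)\leq(M)\leq(L)$ forces $(M)=(L)$---is locally closed, hence open in $\fg_{\geq(L)}$ and closed in $\fg_{\leq(L)}$. All of these subloci are stable under the acting group ($G$, or $P$ acting on $\fl$ through $P\to L$, or $L$), so passing to adjoint quotient stacks carries open/closed immersions of varieties to open/closed immersions of stacks.

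The cartesian squares fall into two groups. The squares lying entirely on the right---those internal to the $\up$- and $\ul$-columns, and the four squares joining those two columns---all reduce to the tautologies $\fp^\reg=\fp\times_\fl\fl^\reg$, $\fp_\heartsuit=\fp\times_\fl\fl_\heartsuit$ and $\fp^\reg_\heartsuit=\fp\times_\fl\fl^\reg_\heartsuit=\fp^\reg\times_{\fl^\reg}\fl^\reg_\heartsuit$, all immediate from the preimage definitions; one then notes that pulling back an open (resp.\ closed) immersion along $\up\to\ul$, which factors as $\fp/P\to\fl/P\to\fl/L$, produces exactly the open (resp.\ closed) substack it cuts out, the unipotent-gerbe ambiguity of the last map being harmless. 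The squares ``coming out of the page'' are the three squares internal to the $\fg$-, $\fp$- and $\fl$-columns, and the fact that they are cartesian amounts to the identities $\fg_{(L)}=\fg_{\geq(L)}\cap\fg_{\leq(L)}$, $\fp^\reg_\heartsuit=\fp^\reg\cap\fp_\heartsuit$ and $\fl^\reg_\heartsuit=\fl^\reg\cap\fl_\heartsuit$, read with reduced structures.

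The main obstacle, such as it is, is bookkeeping: one must match each square claimed to be cartesian to the correct preimage or intersection identity, and take care \emph{not} to over-claim, since several squares visible in the figure are not cartesian. For instance the square with vertices $\up_\heartsuit$, $\ug_{\geq(L)}$, $\up$, $\ug$ fails, because $\fp\cap\fg_{\geq(L)}$ strictly contains $\fp_\heartsuit$ in general---already for $G=GL_4$ with $L$ of type $(2,2)$, where $x=\mathrm{diag}(1,2,1,2)$ lies in $\fp\cap\fg_{\geq(L)}$ but not in $\fp_\heartsuit$---and likewise the square with vertices $\up^\reg$, $\ug_{\leq(L)}$, $\up$, $\ug$ fails because $\fp\cap\fg_{\leq(L)}\supsetneq\fp^\reg$. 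The one genuinely non-formal ingredient is the topological structure of the Lusztig partition, which is imported from Subsection~\ref{subsectionregular}.
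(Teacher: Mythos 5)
The paper gives no proof of this proposition---it is presented as a ``summary'' of the preceding two subsections---so you are filling in details the author leaves implicit. Your unpacking is correct and is the only natural route. A few confirming remarks.

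Your characterization of $\fl^\reg$ as the non-vanishing locus of $x\mapsto\det(\ad(x)|_{\fl^\perp})$ is legitimate: the splitting $\fg=\fl\oplus\fl^\perp$ coming from an invariant form is $\ad(\fl)$-stable, so $\ad(x)$ preserves $\fl^\perp$ for $x\in\fl$, and $C_\fg(x)\subseteq\fl$ is exactly the injectivity (equivalently, by dimension count, the bijectivity) of $\ad(x)|_{\fl^\perp}$, as used in Lemma~\ref{lemmaregular}. Your reduction of all the cartesian claims to the preimage identities $\fp^\reg=\fp\times_\fl\fl^\reg$, $\fp_\heartsuit=\fp\times_\fl\fl_\heartsuit$, $\fp^\reg_\heartsuit=\fp^\reg\times_{\fl^\reg}\fl^\reg_\heartsuit$ (for the right-hand squares) and to the intersection identities $\fg_{(L)}=\fg_{\geq(L)}\cap\fg_{\leq(L)}$, etc.\ (for the out-of-page squares) is exactly what the diagram encodes. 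And the caution is well taken: the proposition deliberately asserts only the \emph{right-hand} and out-of-page squares are cartesian; your $GL_4$ example with $x=\mathrm{diag}(1,2,1,2)$ and $L$ of type $(2,2)$ correctly shows that $\fp\cap\fg_{\geq(L)}\supsetneq\fp_\heartsuit$ because $x_s\notin\fz(\fl)$, so the left-hand squares genuinely fail. The only slight soft spot is the passage from subvarieties to substacks, but since every sublocus in sight is stable under the relevant acting group and the stack morphisms $\up\to\ul$ and $\up\to\ug$ come from equivariant maps of varieties (possibly composed with a unipotent gerbe, which is faithfully flat), open and closed immersions and fiber products descend without incident---as you say. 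Nothing is missing.
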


	\subsection{The Lusztig partition of the commuting variety}\label{subsectionlusztigcomm}
	In this subsection, we generalize the partition of Subsection \ref{subsectionpartition} to a partition of the variety $\comm(\fg)$ of commuting elements in $\fg$. Recall that given an element $x\in \fg$, $H(x) = H_G(x) =C_G(x_s)$ is defined to be the centralizer of the semisimple part of $x$. Similarly, given $(x,y) \in \comm(\fg)$, we define $H(x,y) = H(x) \cap H(y)$. It is the stabilizer of the pair of commuting semisimple elements $(x_s,y_s)$.
	
	Given a Levi subgroup $L$ of $G$, define
	\[
	\comm(\fg)_{(L)} = \{(x,y) \in \comm(\fg) \mid H(x,y) \quad \text{is conjugate to}\quad  L \},
	\]
	Note that $H(x)$ and $H(y)$ are Levi subgroups of $G$ which contain a common maximal torus (as $x_s$ and $y_s$ commute), thus their intersection is indeed a Levi subgroup of $G$. As for the partition of $\fg$, we write $\comm(\fg)_{\heartsuit}$ for $\comm(\fg)_{(G)}$, and (for example) $\comm(\fg)_{\nleq(L)}$ for the union of $\comm(\fg)_{(M)}$, where $(M) \nleq(L)$ in the poset $\Levi_G$.

\begin{example}\label{examplecommutinggln}
Continuing Examples \ref{examplegln} and \ref{examplestratificationgln}, let us consider the case $G=GL_n$. Given a partition $p = (p_1,\ldots,p_k)$ with corresponding Levi subgroup $L_p$, the subspace $\comm(\fgl_n)_{(L_p)}$ consists of pairs of commuting matrices $(x,y)$ with $k$ distinct simultaneous eigenvalues $\lambda_1, \ldots, \lambda_k$, such that the dimension of the $\lambda_i$-generalized simultaneous eigenspace (i.e. the simultaneous eigenspace for the commuting semisimple matrices $x_s$,$y_s$) is $p_i$.
\end{example} 
	
	\begin{remark}\label{remarkpartitioncommuting} Let us examine how the loci in the commuting variety interact with those in $\fg$. We have the following identities:
		\begin{itemize}
			\item $\comm(\fg)_{\heartsuit}= \left(\fg_{\heartsuit}\times \fg_{\heartsuit}\right) \cap \comm(\fg) = \fz(\fg)\times \fz(\fg) \times \comm(\cN_G)$. 
			\item $\comm(\fg)_{(L)} \cap \left(\fg\times \fg_{\heartsuit}\right) =\left( \fg_{(L)} \times \fg_{\heartsuit} \right)\cap \comm(\fg)$
		\end{itemize}
Moreover, there is an inclusion
\[
\comm(\fg)_{\nleq(L)} \subseteq \left(\fg_{\nleq(L)} \times \fg_{\nleq(L)}\right) \cap \comm(\fg)
\]
but it is not an equality in general. For example, take $G=GL_3$, and $L=GL_1^3$ to be the maximal torus, consisting of diagonal matrices. Then consider the pair $(x,y) \in \comm(\fgl_3)$, where $x=diag(0,0,1)$ and $y=diag(1,0,0)$. Then $H(x) = GL_2 \times GL_1$ and $H(y) = GL_1 \times GL_2$ (thought of as block matrices). Thus both $x$ and $y$ are contained inside $\fgl_{\nleq(GL_1^3)}$ (which is the complement of the regular semisimple locus). However $H(x)\cap H(y) = GL_1^3$, so $(x,y)$ is not contained in $\comm(\fgl_3)_{\nleq (GL_1^3)}$. In words, $x$ and $y$ are not regular semisimple matrices (they have repeated eigenvalues), but they are ``simultaneously regular semisimple'' (they have distinct simultaneous eigenvalues).
	\end{remark}
	
	Given a Levi subgroup $L$ of $G$, we define 
	\[
	\comm(\fl)^{\reg} = \{ (x,y) \in \comm(\fl) \mid H_G(x) \cap H_G(y) \subseteq L \}.
	\]
	For a parabolic subgroup $P$ containing $L$ as a Levi factor, we define $\comm(\fp)^{\reg}$ as the preimage of $\comm(\fl)^{\reg}$ under the canonical map $\comm(\fp) \to \comm(\fl)$. Similarly $\comm(\fl)^\nonreg$ is the complement of $\comm(\fl)^\reg$ in $\comm(\fl)$.
	
\begin{remark}
It is natural to ask whether the partition of $\comm(\fg)$ considered here refines to a stratification as in the case of $\fg$. In the case of $\fg$, this stratification arises essentially from the stratification of the nilpotent cone in to orbits (this is the theory of Jordan normal form in the $GL_n$ case). Following the analogy between $\fg$ and $\comm(\fg)$, we are reduced to finding a stratification of $\comm(\cN_G)$. However, unlike for $\cN_G$, there are infinitely many $G$-orbits on $\comm(\cN_G)$, so it is not so clear how to proceed. The variety $\comm(\cN_G)$ is studied by Premet \cite{premet_nilpotent_2003}, where it is shown that the irreducible components of $\comm(\cN_G)$ are in bijection with distinguished nilpotent orbits for $G$ (and moreover the variety is equidimensional).
\end{remark}

	\section{Properties of Induction and Restriction}\label{sectionproperties}
	In this section, we will prove that parabolic restriction and induction, restrict to exact functors on the level of abelian categories, and characterize the kernel of parabolic restriction in terms of singular support.
	
	\subsection{Induction and restriction over the regular locus}\label{subsectionregular}
		In this subsection we will keep the conventions of Section \ref{sectionmackey} (so $P$ and $Q$ are parabolic subgroups of $G$ with Levi quotients $L$, $M$ etc.), but additionally fix Levi splittings of $L$ and $M$ inside $P$ and $Q$.  Consider the set $\cS(M,L)$ which is defined to be the set of conjugates,$\ls gL$ of $L$ such that $M\cap \ls gL$ contains a maximum torus of $G$ (for convenience, let us assume that $M\cap L$ contains a maximum torus $H$ of $G$). There is a bijection $\quot{M}{\cS(M,L)}{L} \simeq \quot QGP$.
		
	According to the results of Subsection \ref{sec:etale-maps-and-galois-covers} (see Proposition \ref{propositiondiagram}), parabolic induction and restriction on the regular loci are just given by pullback and pushforward along the {\'e}tale map $d^\reg: \ul^\reg \to \ug$ (whose image is $\ug_{\leq(L)}$). More precisely, we have
		\[
		\RES^G_{P,L}(\fM)|_{\ul^\reg} \simeq \left(d^\reg\right)^!(\fM).
		\]

	This result has a number of important consequences. First let us give the following key definition:

	\begin{definition}
		An object $\fM \in \bD(\ug)$ is called \emph{cuspidal} if $\RES^G_{P,L}(\fM) \simeq 0$ for every proper parabolic subgroup $P$ of $G$. The full subcategory of cuspidal objects is denoted $\bD(\ug)_{cusp}$.
	\end{definition}

	\begin{proposition}\label{propositionsupport}
		Suppose $\fM \in \bD(\ug)$ and $\RES^G_{P,L}(\fM) \simeq 0$. Then $Supp(\fM) \subseteq \fg_{\nleq(L)}$. In particular, if $\fM$ is cuspidal, then $Supp(\fM)\subseteq \fg_{\heartsuit}$.
	\end{proposition}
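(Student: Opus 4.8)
The plan is to deduce the support statement entirely from the behaviour of parabolic restriction over the regular locus, as recorded in the displayed identity just above: $\RES^G_{P,L}(\fM)|_{\ul^\reg} \simeq (d^\reg)^!(\fM)$, where $d^\reg\colon \ul^\reg \simeq \up^\reg \to \ug_{\leq(L)}$ is the \'etale morphism of Proposition~\ref{propositionetale} (this is part of the content of Proposition~\ref{propositiondiagram}). Here $(d^\reg)^!\fM$ is shorthand for $(d^\reg)^!$ applied to the restriction of $\fM$ to the open substack $\ug_{\leq(L)} \subseteq \ug$, since $d^\reg$ factors through that open substack.

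So, assuming $\RES^G_{P,L}(\fM) \simeq 0$, I would first restrict to the open substack $\ul^\reg \subseteq \ul$ to obtain $(d^\reg)^!\bigl(\fM|_{\ug_{\leq(L)}}\bigr) \simeq 0$. The next step is to observe that $(d^\reg)^!$ is conservative: $d^\reg$ is \'etale by Proposition~\ref{propositionetale}, and it is surjective because $\fg_{\leq(L)} = \ls{G}{(\fl^\reg)}$ by construction, so every point of $\ug_{\leq(L)}$ lies in its image; an \'etale surjection is faithfully flat, and since $d^\reg$ has relative dimension $0$ one has $(d^\reg)^! \simeq (d^\reg)^\ast$, whose exactness and faithfulness for $D$-modules give conservativity. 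Hence $\fM|_{\ug_{\leq(L)}} \simeq 0$. Finally, since $\ug_{\leq(L)}$ is open in $\ug$ (again Proposition~\ref{propositiondiagram}), the vanishing of $\fM$ on it says precisely that $\mathrm{Supp}(\fM)$ is disjoint from $\fg_{\leq(L)}$, i.e.\ $\mathrm{Supp}(\fM) \subseteq \fg \setminus \fg_{\leq(L)} = \fg_{\nleq(L)}$, as desired. Note that only the restriction of $\RES^G_{P,L}(\fM)$ to $\ul^\reg$ is actually used.

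The argument is short, so there is no serious obstacle; the one point deserving a moment of care is the conservativity of $!$-pullback along the \'etale surjection $d^\reg$ of stacks (equivalently, faithfully flat descent of vanishing of $D$-modules), together with the bookkeeping that $\RES$ over the regular locus is genuinely computed by $(d^\reg)^!$ on $\ug_{\leq(L)}$ — both of which are supplied by the preceding subsection.
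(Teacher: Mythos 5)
Your proof is correct and takes essentially the same route as the paper: both compute $\RES^G_{P,L}(\fM)|_{\ul^\reg}$ as $(d^\reg)^!\fM$ and use that $d^\reg$ is \'etale with image $\ug_{\leq(L)}$ to conclude $\fM|_{\ug_{\leq(L)}}\simeq 0$. You have simply spelled out the conservativity step (\'etale surjective $\Rightarrow$ $!$-pullback conservative) that the paper leaves implicit.
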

	\begin{proof}
		Given $\fM$ with $\RES^G_{P,L}(\fM) \simeq 0$, we have
		\[
		d^{\reg,!}(\fM) \simeq \RES^G_{P,L}(\fM)|_{\ul^\reg} \simeq 0.
		\]
		But $d^\reg$ is \'etale with image $\ug_{\leq (L)}$, thus $\fM |_{\ug_{\leq (L)}} \simeq 0$. Thus $\fM$ is supported in $\ug_{\nleq(L)}$. Now, if $\fM$ is cuspidal, then $\RES^G_{P,L}(\fM)\simeq 0$ for every proper parabolic subgroup $P$. Thus $\fM$ is supported in the intersection of $\fg_{\nleq(L)}$ for all proper Levi subgroups $L$ of $G$; this intersection is precisely $\fg_{\heartsuit}$. Thus $\fM$ is supported in $\fg_{\heartsuit}$ as required.
	\end{proof}
	
	The next result explains how the Mackey filtration is naturally split on the regular locus. Recall that we have fixed another Levi subgroup $M$ inside a parabolic $Q$ (in addition to $L$ and $P$). 
	
	\begin{proposition}\label{propositionindresreg}
		Let $j: \um^\reg \hookrightarrow \um$ denote the inclusion. There is a canonical natural isomorphism:
		\[
		j^! \circ \lrsubsuper{M,Q}{}{\ST}{P,L}{} \simeq \bigoplus_{w\in \quot{M}{\cS(M,L)}{L}} j^! \circ \lrsubsuper{M,Q}{}{\ST}{P,L}{w}.
		\]
	\end{proposition}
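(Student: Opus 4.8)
The plan is to reduce the statement to a single geometric fact about the Steinberg stack and then deduce that fact from the étale‑ness of the Grothendieck–Springer morphism over the regular locus. Recall from Subsection~\ref{ssparabolic} that $\ST\simeq\alpha_\ast\beta^!$ for the correspondence $\um\xleftarrow{\;\alpha\;}\St QP\xrightarrow{\;\beta\;}\ul$, that the Mackey filtration is the filtration associated to the stratification $\St QP=\bigsqcup_w\Stw QPw$, and hence that $\lrsubsuper{M,Q}{}{\ST}{P,L}{w}\simeq(\alpha_w)_\ast(\beta_w)^!$, where $\alpha_w,\beta_w$ are the restrictions of $\alpha,\beta$ to the stratum. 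Set $\cY:=\alpha^{-1}(\um^\reg)$ and $\cY_w:=\alpha_w^{-1}(\um^\reg)=\Stw QPw\cap\cY$, and write a bar for the restriction of a morphism to these loci. Since $j\colon\um^\reg\hookrightarrow\um$ is an open immersion, base change gives $j^!\ST\simeq\bar\alpha_\ast\bar\beta^!$ and $j^!\lrsubsuper{M,Q}{}{\ST}{P,L}{w}\simeq(\bar\alpha_w)_\ast(\bar\beta_w)^!$. As the category of $D$‑modules on a disjoint union of substacks is the direct sum of the categories on the pieces, and $\bar\alpha_\ast,\bar\beta^!$ are additive, it suffices to prove that $\cY=\bigsqcup_w\cY_w$ is a decomposition into \emph{open and closed} substacks.

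To prove this: projecting $\St QP=\uq\times_{\ug}\up$ to the first factor exhibits $\cY$ as $\uq^\reg\times_{\ug}\up$, and by Proposition~\ref{propositiondiagram} the morphism $\uq^\reg\simeq\um^\reg\to\ug$ occurring here is the étale morphism of Proposition~\ref{propositionetale}. Hence $\cY=\um^\reg\times_{\ug}\up$ is étale over $\up$; since $\up=\fp/P$ is a smooth stack, $\cY$ is smooth, and therefore $\cY$ is the disjoint union of its connected components, each of which is irreducible and clopen. On the other hand each $\cY_w$ is an open substack of $\Stw QPw\simeq(\fq\cap\ls w\fp)\adjquot(Q\cap\ls wP)$ — a quotient stack of an affine space — hence is irreducible or empty; and by the equidimensionality of the Steinberg stratification recorded in Subsection~\ref{subsectionsteinberg}, all nonempty $\cY_w$ share a common dimension $D$. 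Now each $\cY_w$, being connected, lies in a single component $C$ of $\cY$, which then has $\dim C=D$; so $\cY_w$ is a $D$‑dimensional irreducible locally closed subset of the irreducible $D$‑dimensional $C$, hence is dense in $C$, hence is \emph{open} in $C$. Two disjoint nonempty open substacks of the irreducible $C$ cannot coexist, so $C$ contains exactly one nonempty $\cY_w$, and that $\cY_w$ equals $C$. Thus every nonempty $\cY_w$ is clopen and $\cY=\bigsqcup_w\cY_w$ is the required decomposition. This is the precise form of the heuristic that $\widetilde{\fg}\times_{\fg}\widetilde{\fg}$, restricted over the regular semisimple locus $\fg^{rs}$, breaks up as $\coprod_{w\in W}\fg^{rs}$, a disjoint union of graphs of Weyl group elements.

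The one place to be careful is to resist restricting to $\up^\reg$: a point of $\cY$ is regular on the $\um$‑side but need not be regular on the $\ul$‑side — already for $G=SL_3$ with $P$ a Borel, $\cY=\um^\reg\times_{\ug}\up$ is strictly larger than $\um^\reg\times_{\ug}\up^\reg$ — so one cannot invoke Proposition~\ref{propositionetale} on the $\ul$‑side. This is harmless, because the argument uses only that $\cY$ is smooth, which follows from étale‑ness on the $\um$‑side alone. Everything else is formal: base change along the open immersion $j$, the irreducibility and equidimensionality of the Steinberg strata, and the fact that a smooth finite‑type stack is the disjoint union of its irreducible components. (Alternatively one could run the same argument with $\up$ replaced by the commuting‑variety model of Proposition~\ref{propositioncommuting}, where étale‑ness holds over the whole regular locus.)
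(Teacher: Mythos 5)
Your proof is correct, but it establishes the key ingredient --- Lemma~\ref{lemmastreg}, that the stratification of $(\St QP)^\reg$ by $\quot QGP$ is by clopen pieces --- by a genuinely different route from the paper. The paper's argument is a symmetry/combinatorial one: identifying $\St QP^\reg \simeq \St{\overline Q}{P}^\reg$ via $\uq^\reg\simeq\um^\reg\simeq\overline{\uq}^\reg$, it observes that the same stratification is indexed by $\quot QGP$ and by $\quot{\overline Q}{G}{P}$, and these induce opposite partial orders on $\quot{M}{\cS(M,L)}{L}$. Since the closure relations among the strata must be compatible with both orders, they are self-opposed, hence trivial, hence every stratum is closed (and so clopen). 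Your argument instead proceeds geometrically: étale-ness of $\um^\reg\to\ug$ makes $\cY=\um^\reg\times_\ug\up$ smooth, hence a disjoint union of irreducible components; each nonempty $\cY_w$ is irreducible and, by the equidimensionality of the Steinberg stratification, of full dimension in its ambient component, so it is dense-open in that component, and distinct strata cannot share a component. Both proofs are sound and roughly the same length; yours trades the opposite-parabolic/Bruhat-order trick for smoothness plus equidimensionality (the latter is recorded in Subsection~\ref{subsectionsteinberg}, with its own small remark about the orbit-stabilizer theorem). Your parenthetical caution about $\up$ versus $\up^\reg$ is well taken and correct; the reduction to base change along the open immersion $j$, and the use of $\alpha^{-1}(\um^\reg)=\uq^\reg\times_\ug\up$, matches the paper's setup exactly.
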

	\begin{proof}
		We consider the open substack $\left(\St {Q}{P}\right)^\reg := \uq^\reg \times _{\ug} \up$ of $\St QP$. We denote by $\left(\Stw {Q}{P}{w}\right)^\reg$ the intersection of $\Stw QPw$ with $\left(\St {Q}{P}\right)^\reg$.
		Recall that the functor $\lrsubsuper{M,Q}{}{\ST}{P,L}{}$ is given by $\alpha_\ast \beta^!$ where:
		\[
		\xymatrix{
			\um & \St QP \ar[l]_\alpha \ar[r]^\beta & \ul
		}
		\]
		Thus $j^! \circ \lrsubsuper{M,Q}{}{\ST}{P,L}{w}$ is given by $\alpha^\reg _\ast \beta^!$ (where $\alpha^\reg: \St{Q}{P}^\reg \to \fm^\reg$). The result now follows from Lemma \ref{lemmastreg} below.
	\end{proof}
	
	\begin{lemma}\label{lemmastreg}
		Each stratum in the stratification
		\[
		\left(\St QP\right)^\reg \simeq \bigsqcup_{w\in \quot QGP} \left(\Stw QPw\right)^\reg.
		\]
		is both open and closed (i.e. the stratification is a disjoint union of connected components).
	\end{lemma}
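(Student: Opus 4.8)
The plan is to reduce the claim to the assertion that the \emph{relative position} function is locally constant on $\left(\St QP\right)^\reg$, and then to extract that local constancy from the regularity condition via the étale description of the previous subsections.

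First I would dispose of the soft part. Each $\left(\Stw QPw\right)^\reg$ is locally closed in $\left(\St QP\right)^\reg$ (it is the trace there of the locally closed stratum $\Stw QPw\subseteq\St QP$), there are finitely many of them, and together they partition $\left(\St QP\right)^\reg$; hence it suffices to show that each one is \emph{open}, for then its complement is a finite union of the others and is again open, so every stratum is clopen and the stratification is by connected components. Passing to the smooth atlas $\{(x,\fq',\fp'): x\in\fq'\cap\fp'\}^\reg$ and using $\Stw QPw\simeq(\fq\cap\ls w\fp)\adjquot(Q\cap\ls wP)$, the locus $\left(\Stw QPw\right)^\reg$ is --- up to the quotient by the (connected) group $Q\cap\ls wP$ --- the open subvariety of the \emph{linear} space $\fq\cap\ls w\fp$ cut out by the open $\fq$-regularity condition on $x$; so each non-empty $\left(\Stw QPw\right)^\reg$ is irreducible, and by the equidimensionality of the stratification of $\St QP$ it has the maximal dimension. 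Consequently the non-empty strata are exactly the irreducible components of $\left(\St QP\right)^\reg$, each open and dense in its closure, and the remaining point is to show that these components are \emph{pairwise disjoint}.

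For this I would argue that the relative position cannot drop along a path that stays inside $\left(\St QP\right)^\reg$. Suppose $(x_t,\fq'_t,\fp'_t)$ is a family in $\left(\St QP\right)^\reg$ with $\fq'_t$ and $\fp'_t$ in relative position $w$ for $t\neq 0$ and with limit $(x_0,\fq'_0,\fp'_0)\in\left(\St QP\right)^\reg$. A priori the relative position $w'$ of $\fq'_0$ and $\fp'_0$ is some specialization $w'\le w$, and I must rule out $w'<w$. The key is the regularity constraint $C_\fg\bigl((x_t)_s\bigr)\subseteq\fq'_t$, which survives in the limit. By Proposition \ref{propositiondiagram}, over the regular locus the parabolic $\fq'$ is pinned down by $(x,\fp')$ up to an étale cover: concretely, $\fq'$ is the parabolic attached to a cocharacter valued in the connected centre $Z\bigl(C_G(x_s)\bigr)^\circ$, and decomposing $\fp'$ into weight spaces for this torus the relative position of $\fq'$ and $\fp'$ is read off from a combinatorial ``chamber'' datum that is locally constant as $x_s$ varies continuously. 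A drop $w'<w$ would mean $\fq'_t$ and $\fp'_t$ ``align'' strictly more in the limit; since both pass through $x_t$ and $\fq'_t\supseteq C_\fg\bigl((x_t)_s\bigr)$, this forces the conjugacy class of the Levi $C_G\bigl((x_t)_s\bigr)$ to degenerate incompatibly, so that $C_\fg\bigl((x_0)_s\bigr)$ becomes too large to lie inside $\fq'_0$ --- contradicting $(x_0,\fq'_0,\fp'_0)\in\left(\St QP\right)^\reg$. The simplest instance, $G=SL_2$ with $P=Q=B$, already exhibits the mechanism: two Borels through a common regular semisimple $x$ are two Borels containing the fixed torus $C_\fg(x)$, and their relative position cannot change as $x$ moves, because that torus varies continuously while the (two) Borels containing it form a discrete cover.

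The step I expect to be the genuine obstacle is this last one: making the ``chamber is locally constant'' argument uniform across the boundaries of the Lusztig strata of $\ug_{\le(M)}$ --- that is, controlling how the finite set of parabolics adapted to $C_G(x_s)$, and their relative positions to a second parabolic through $x$, behave when the Levi $C_G(x_s)$ itself jumps. An alternative I would keep in reserve, in case the one-parameter-subgroup bookkeeping becomes unwieldy, is to bypass this entirely by transporting the étaleness statement for commuting stacks (Proposition \ref{propositioncommuting}) through the Fourier transform, as is done elsewhere in the paper.
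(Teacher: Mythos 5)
Your soft reductions are correct as far as they go, but the proof is genuinely incomplete, and you yourself flag the hole. Let me spell out why the hole is real and then describe the quite different (and much shorter) route the paper takes.

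The preliminary steps are fine: it does suffice to show each stratum is open, each $\left(\Stw QPw\right)^\reg$ is irreducible (open subset of a linear space modulo a connected group $Q\cap\ls wP$), and the strata are equidimensional. But equidimensionality plus irreducibility plus locally-closedness do \emph{not} by themselves force the strata to be closed. Concretely: in $\A^2$ take $X$ to be the union of the line $\{y=0\}$ and the parabola $\{y=x^2\}$, stratified into $X_1=\{y=0\}$ and $X_2=\{y=x^2,\,y\neq 0\}$. Both strata are locally closed, irreducible, and one-dimensional, yet $X_2$ is not closed in $X$: the origin lies in $\overline{X_2}\setminus X_2$ and in $X_1$. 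So "the non-empty strata are the irreducible components, each open and dense in its closure" is accurate, but it leaves open exactly the possibility you identify --- that distinct components might meet --- and closing that gap is the entire content of the lemma.

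That gap is where your argument stays heuristic. You propose that the relative position of $\fq'$ and $\fp'$ cannot drop along a path in $\left(\St QP\right)^\reg$ because a "chamber" datum attached to $Z\bigl(C_G(x_s)\bigr)^\circ$ should be locally constant; but you explicitly concede that making this "uniform across the boundaries of the Lusztig strata" --- i.e.\ across jumps in the Levi $C_G(x_s)$ --- is the genuine obstacle, and no argument is given. Your suggested fallback via Proposition \ref{propositioncommuting} also does not obviously apply: that proposition concerns the maps $\ucomm(\fp)^\reg\to\ucomm(\fg)$ and $\ucomm(\fp)^\reg\to\ucomm(\fl)$, not the Steinberg fiber product $\uq^\reg\times_{\ug}\up$, and there is no direct specialization from one statement to the other.

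The paper's proof bypasses all of this with a symmetry argument. Since the projection $\fq^\reg\to\fm^\reg$ and its analogue for the opposite parabolic $\overline{Q}$ are both isomorphisms on the regular locus, one obtains $\uq^\reg\simeq\um^\reg\simeq\overline{\uq}^\reg$, hence $\left(\St QP\right)^\reg\simeq\left(\St{\overline Q}{P}\right)^\reg$. The index sets $\quot QGP$ and $\quot{\overline Q}{G}{P}$ are identified with $\quot{M}{\cS(M,L)}{L}$ but carry \emph{opposite} closure orders, so the closure relations among the strata of $\left(\St QP\right)^\reg$ are simultaneously order-preserving and order-reversing, hence trivial. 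This proves the lemma in a few lines, with no degeneration analysis and no case-by-case discussion of how $C_G(x_s)$ varies. If you want to salvage your approach, you would need to replace the informal chamber argument with a precise statement and proof; but the opposite-parabolic trick is considerably cleaner and I'd recommend it.
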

	\begin{proof}
		Consider the opposite parabolic $\overline{Q}$ of $Q$ with respect to the Levi subgroup $M$ (with Lie algebra $\overline{\fq}$ etc.). We have isomorphisms $\uq^\reg \simeq \um^\reg \simeq \overline{\uq}^\reg$, and thus 
		\[
		\left(\St QP\right)^\reg \simeq \um^\reg \times_{\ug} \up \simeq \left(\St{\overline{Q}}{P}\right)^\reg
		\]
		Note that the bijections 
		\[
		\quot QGP \simeq \quot{M}{\cS(M,L)}{L} \simeq \quot {\overline{Q}}{G}{P}
		\]
		induce the opposite partial order on $\quot{M}{\cS(M,L)}{L}$. Thus the closure relations amongst the strata $\left(\Stw QPw\right)^\reg$ are self opposed. It follows that each stratum is both open and closed, as required.
	\end{proof}

\subsection{Fourier transform and parabolic restriction}
	The Fourier transform functor for $D$-modules defines an involution (see \cite{hotta_d_2008} 3.2.2, or \cite{ginzburg_lectures_1998} 4.8):
	\[
	\Four_\fg: \bD(\ug) \xrightarrow{\sim} \bD(\ug).
	\]
	\begin{lemma}[Lusztig, Mirkovi\'c \cite{mirkovic_character_2004}, 4.2]\label{lemmafourier}
		The functors of induction and restriction commute with the Fourier transform functor for all parabolic subgroups $P$ with Levi factor $L$:
		\begin{align*}
		\Four_\fl \RES^G_{P,L} \simeq \RES^{G}_{P, L} \Four_\fg   \\
		\Four_\fg \IND^{G}_{P, L} \simeq \IND^G_{P,L} \Four_\fl . 
		\end{align*}
	\end{lemma}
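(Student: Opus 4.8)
The plan is to reduce the statement about parabolic induction and restriction commuting with the Fourier transform to the defining correspondence diagrams, using the fact that the Fourier transform is a Weyl-algebra automorphism compatible with pullback and pushforward along linear maps. First I would recall that for a linear map $f : V \to W$ of vector spaces, the Fourier transform intertwines $f_\ast$ with $f^!_\vee$ (pushforward along $f$ becomes pullback along the transpose $f^\vee : W^\vee \to V^\vee$) and likewise $f^!$ with $(f^\vee)_\ast$, up to the appropriate shifts which are absorbed by the relative-dimension-zero normalization already built into our conventions; this is the content of Appendix \ref{sec:d-modules-on-vector-spaces-and-singular-support}, and it is equivariant for the relevant group actions. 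The key observation is that the three stacks $\ug$, $\up$, $\ul$ in Diagram \ref{diagramgs} are adjoint quotients of the \emph{vector spaces} $\fg$, $\fp$, $\fl$, and that under the identification of each with its dual via a fixed invariant form, the transpose of the inclusion $\fp \hookrightarrow \fg$ is the projection $\fg = \fp \oplus \overline{\fu} \to \fp$ composed with... more precisely, $\fp^\vee \cong \fg/\fu \cong \overline{\fp}$ and the transpose of $r : \fp \hookrightarrow \fg$ is the projection onto $\overline{\fp}$, while the transpose of the projection $s : \fp \twoheadrightarrow \fl$ is the inclusion $\fl \hookrightarrow \overline{\fp}$ (using $\fl = \fl^\vee$). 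In other words, applying Fourier transform to the correspondence $\ug \xleftarrow{r} \up \xrightarrow{s} \ul$ defining $\RES^G_{P,L} = s_\ast r^!$ produces the correspondence $\ug \xleftarrow{} \underline{\overline{\fp}} \xrightarrow{} \ul$ defining parabolic restriction \emph{with respect to the opposite parabolic} $\overline{P}$.

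The second ingredient is therefore the independence of parabolic restriction (and induction) from the choice of parabolic subgroup containing $L$ as a Levi factor — in particular $\RES^G_{P,L} \simeq \RES^G_{\overline{P},L}$. This is part of Theorem \ref{maintheoremindres}; but to avoid circularity (that theorem is proved later using, among other things, these geometric inputs), I would instead argue directly at the level of the correspondence diagrams, or simply cite the independence as it is used here only in the derived setting where it follows from the Mackey formula of Proposition \ref{propositionmackey} applied with $Q = \overline{P}$, $M = L$: the double coset space $\quot{L}{G}{L}$ relevant to $\Stw{\overline{P}}{P}{w}$ has a distinguished element whose stratum gives the identity component, and a dimension count (or the fact that $\overline{\fp} \cap \fp = \fl$) shows this stratum already exhausts the regular locus, which by Lemma \ref{lemmairregsingsupp}-type reasoning forces the other terms to contribute nothing after one checks the relevant supports — alternatively, one observes directly that $\underline{\overline{\fp}} \times_{\ug} \up \to \underline{\overline{\fp}}$ and $\to \up$ witness an equivalence of the two restriction functors. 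Chaining these two facts gives
\[
\Four_\fl \RES^G_{P,L} \simeq \RES^G_{\overline{P},L} \Four_\fg \simeq \RES^G_{P,L} \Four_\fg,
\]
and the statement for $\IND$ follows either by the same argument run on the dual correspondence, or formally by passing to adjoints, since $\Four$ is an involutive equivalence and induction is adjoint to restriction.

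The main obstacle I anticipate is bookkeeping the cohomological shifts and the precise identification of transposes: the Fourier transform swaps the roles of $\fu$ and $\overline{\fu}$ and turns a smooth-of-relative-dimension-$\dim\fu$ morphism into a smooth-of-relative-dimension-$(-\dim\fu)$ morphism, so one must verify that the net shift is zero — but this is exactly the reason the paper works with the non-representable morphism $s : \up \to \ul$ of relative dimension $0$, as emphasized in the remark following Diagram \ref{diagramgs}, so the shifts should cancel cleanly. A secondary subtlety is ensuring all the intertwining isomorphisms are compatible with the $G$-equivariant structures and with base change, but since every map in sight is $G$-equivariant and linear on the relevant vector spaces, the equivariant Fourier transform of Appendix \ref{sec:d-modules-on-vector-spaces-and-singular-support} handles this. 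Since the result is already available in the constructible/character-sheaf setting by Lusztig and Mirković, and our $\RES$, $\IND$ are defined by the same correspondences, I would also remark that the proof is essentially unchanged; the only novelty is that it applies verbatim to \emph{all} $D$-modules rather than just holonomic ones, which requires nothing beyond the general compatibility of $\Four$ with $f_\ast$ and $f^!$ for linear $f$.
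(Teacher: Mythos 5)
You have the right ingredients (Fourier transform swaps $f_\ast$ with $(f^\vee)^!$ and $f^!$ with $(f^\vee)_\ast$, the transposes of $r:\fp\hookrightarrow\fg$ and $s:\fp\twoheadrightarrow\fl$ are the projection $\fg\twoheadrightarrow\fg/\fu$ and the inclusion $\fl\hookrightarrow\fg/\fu$, and the relative-dimension-zero normalization kills the shifts), but you misassemble them. Applying Fourier to $\RES^G_{P,L}=s_\ast r^!$ gives $(s^\vee)^! (r^\vee)_\ast$, which is the functor attached to the \emph{co}-correspondence $\ug\xrightarrow{\pi}\underline{\fg/\fu}\xleftarrow{\iota}\ul$ (arrows pointing \emph{into} the middle), \emph{not} the correspondence $\ug\xleftarrow{}\underline{\overline{\fp}}\xrightarrow{}\ul$ for the opposite parabolic. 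Identifying $\fg/\fu$ with $\overline{\fp}$ via a splitting does not reverse the arrows: the functor you obtain is push-then-pull through $\fg/\fu$, not pull-then-push through $\overline{\fp}$. These are genuinely different; indeed, if they agreed you would have $\RES^G_{\overline{P},L}\simeq\RES^G_{P,L}$ in the derived category, and the paper explicitly flags that this fails in general (this is the whole content of the second adjunction being a theorem and not a tautology).

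The actual mechanism, which is what the paper's sketch records, is that the diamond
\[
\xymatrix{
&\fg \ar@{->>}[dr]^{\pi}&\\
\fp \ar@{^{(}->}[ur]^{r} \ar@{->>}[rd]_{s}&& \fg/\fu\\
&\fl \ar@{^{(}->}[ur]_{\iota}&
}
\]
is cartesian, so base change identifies $\iota^!\pi_\ast$ (the Fourier-transformed, wrong-way composition) with $s_\ast r^!$ (the original $P$-restriction). No opposite parabolic enters; the Fourier transform flips the diamond along its horizontal axis and base change flips it back. By routing through $\overline{P}$ you introduce an independence-of-parabolic step which (a) is not available to you at this point without circularity, since Theorem~\ref{maintheoremindres} is downstream of this lemma via Lemma~\ref{lemma2implies3} and Proposition~\ref{propresexact}, and (b) is in any case false at the level of $\bD(\ug)$, so the detour cannot be patched by the sketched Mackey-filtration or regular-locus arguments. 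The fix is simply to drop the $\overline{P}$ identification and invoke the cartesianness of the diamond above.
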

\begin{proof}[Sketch of proof]
	In the case of parabolic restriction, the lemma follows from the observation that the following diagram is cartesian, and taking duals swaps the right hand side with the left hand side:
	\[
\xymatrix{
&\fg \ar@{->>}[dr]&\\
\fp \ar@{^{(}->}[ur] \ar@{->>}[rd]&& \fg/\fu\\
&\fl \ar@{^{(}->}[ur]& 	
}\]
\end{proof}

\begin{lemma}\label{lemmasingsuppreszero}
Let $\fM$ be an object of $\bD(\ug)$. 
\begin{enumerate}
\item If $\RES^G_{P,L}(\fM)\simeq 0$, then the singular support of $\fM$ and of $\F(\fM)$ is contained in $\left(\fg_{\nleq(L)} \times \fg_{\nleq(L)}\right) \cap \comm(\fg)$. 
\item If $\fM$ is cuspidal, then the singular support of $\fM$ and of $\F(\fM)$ is contained in $\comm(\fg)_{\heartsuit}$.
\end{enumerate}
\end{lemma}
\begin{proof}
By Proposition \ref{propositionsupport} we have that $Supp(\fM) \subseteq \fg_{\nleq(L)}$. By Lemma \ref{lemmafourier}, we also have that $\RES^G_{P,L}(\F \fM) \simeq 0$, and thus $Supp(\F \fM) \subseteq \fg_{\nleq(L)}$. By Lemma \ref{lemmasingsuppFourier}, we see that $SS(\fM) \subseteq \fg_{\nleq(L)}\times \fg_{\nleq(L)}$; the same argument applies to $\F\fM$, which proves the first claim. If $\fM$ is cuspidal, then by the first part of the lemma, $SS(\fM)$ and $SS(\F\fM)$ are contained in $(\fg_{\heartsuit}\times \fg_{\heartsuit}) \cap \comm(\fg)$; but this is precisely $\comm(\fg)_{\heartsuit}$, as required (see Remark \ref{remarkpartitioncommuting}).
\end{proof}

	\subsection{Second adjunction}
	The following result was proved by Drinfeld-Gaitsgory \cite{drinfeld_theorem_2014} (using ideas developed by Braden \cite{braden_hyperbolic_2003}).
	\begin{theorem}\label{theoremsecond}
		Let $P^-$ denote the opposite parabolic subgroup of $P$ with respect to $L$. Then $\RES^G_{P^-,L}$ is left adjoint to $\IND^G_{P,L}$.
	\end{theorem}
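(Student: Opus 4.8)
The plan is to recognize this as an instance of hyperbolic localization and to reduce it to the contraction principle of Braden \cite{braden_hyperbolic_2003}, in the stacky form established by Drinfeld--Gaitsgory \cite{drinfeld_theorem_2014}; parabolic induction and restriction are in fact among the motivating examples there, so the work is almost entirely a matter of setting up the correct geometry.

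First I would unwind the formal structure of the functors. Since $\IND^G_{P,L} = r_\ast s^!$ with $r\colon\up\to\ug$ proper (so $r_\ast = r_!$) and $s\colon\up\to\ul$ very safe and smooth of relative dimension $0$ (so $s^! = s^\ast$), the functor $r^\ast$ is left adjoint to $r_\ast$ and $s_!$ is left adjoint to $s^!$; hence $\IND^G_{P,L}$ admits a left adjoint, namely $s_!\,r^\ast\colon\bD(\ug)\to\bD(\ul)$. As ``being a left adjoint of $\IND^G_{P,L}$'' is preserved under isomorphism of functors, it then suffices to produce a natural isomorphism
\[
s_!\,r^\ast \;\simeq\; s^-_\ast\,(r^-)^! \;=\; \RES^G_{P^-,L},
\]
where $r^-\colon\up^-\to\ug$ and $s^-\colon\up^-\to\ul$ (with $\up^- = \fp^-/P^-$) are the structure maps attached to the opposite parabolic $P^-$.

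Next I would introduce the relevant torus action: choose a cocharacter $\lambda\colon\G_m\to G$ with $P = P(\lambda)$, $L = L(\lambda) = C_G(\lambda)$, $P^- = P(-\lambda)$, and let $\G_m$ act on $\fg$ through $\Ad\circ\lambda$. Then $\fg^{\G_m} = \fl$, the attracting locus is $\fp$, carrying exactly the two maps $\fg\leftarrow\fp\to\fl$ underlying $\IND^G_{P,L}$, and the repelling locus is $\fp^-$, carrying the two maps underlying $\RES^G_{P^-,L}$; this picture survives the passage to adjoint quotient stacks because all the maps in question are equivariant for $P$ (resp.\ $P^-$) and for the torus. In these terms $s_!\,r^\ast$ is the ``$!$-average over the attractor'' and $s^-_\ast\,(r^-)^!$ the ``$\ast$-average over the repeller'', precisely the two hyperbolic localization functors attached to $(\fg,\lambda)$. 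Since a $G$-equivariant $D$-module on $\fg$ is in particular $\G_m$-monodromic (restrict equivariance along $\mathrm{im}(\lambda)\hookrightarrow G$) and the four functors involved all carry compatible $G$-equivariant enhancements, Braden's theorem in the form of \cite{drinfeld_theorem_2014} supplies the desired natural isomorphism on $\bD(\ug)$, and hence the adjunction.

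The hard part is not conceptual but lies entirely in the contraction principle itself: one must check that its hypotheses are met in this stacky, equivariant, monodromic $D$-module context, and that the cohomological shift and twist normalizations (which our definitions of $\IND$ and $\RES$ via very safe morphisms have arranged to be trivial) match those of \cite{drinfeld_theorem_2014}. Since both points are already settled in \emph{loc.\ cit.}, the proof reduces to the bookkeeping above, and no essentially new argument is required.
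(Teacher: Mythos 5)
Your proposal is correct and follows precisely the route the paper relies on: the paper does not give its own proof of Theorem~\ref{theoremsecond} but simply cites Drinfeld--Gaitsgory \cite{drinfeld_theorem_2014} (building on Braden \cite{braden_hyperbolic_2003}), and your argument is a faithful unwinding of that hyperbolic-localization proof, correctly identifying the formal left adjoint $s_!r^\ast$ of $\IND^G_{P,L}=r_\ast s^!$ and matching it with $\RES^G_{P^-,L}=s^-_\ast(r^-)^!$ via Braden's contraction principle for the $\G_m$-action through a cocharacter $\lambda$ with $P=P(\lambda)$, $L=C_G(\lambda)$, $P^-=P(-\lambda)$.
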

	
	\begin{remark}\label{asdf}
		The theorem gives a ``cycle of adjoints'' of length 4:
		\[
		\ldots \dashv \IND^G_{P,L} \dashv \RES^G_{P,L} \dashv \IND^G_{\overline{P},L} \dashv \RES^G_{\overline{P},L} \dashv \IND^G_{P,L} \dashv \ldots.
		\]
	\end{remark}

	\begin{corollary}\label{corollarysecond}
	The functor $\RES^G_{P,L}$ preserves coherent $D$-modules.
	\end{corollary}

	\begin{proof}[Proof of Corollary \ref{corollarysecond}]
%
		
		Let us first note that $\bM(\ug)$ is a full subcategory of $\bM(\fg)$ (warning: this is not true for the equivariant derived category), which is the category of modules for the Noetherian ring $\fD_{\fg}$. In particular, coherent objects are the same as finitely generated modules, and every object in $\bM(\ug)$ is a union of its coherent subobjects.
		
		Given a coherent object $\fM\in \bM(\ug)$, consider the canonical map:
		\[
		\xymatrix{
			\fN := \res^G_{P,L} (\fM) \ar[r]^-\alpha _-\sim & \bigcup\limits_{i\in I} \fN_i,
		}
		\]
		where the union is over the coherent submodules $\fN_i$ of $\fN$. Consider the corresponding map under the (second) adjunction
		\[
		\fM \xrightarrow{\beta} \ind^G_{P^-,L} (\bigcup\limits_{i\in I} \fN_i) = \bigcup\limits_{i\in I} \ind^G_{P^-,L}(\fN_i).
		\]
		Here we have used the fact that $\ind^G_{P^-,L}$ commutes with filtered colimits (in this case, the union). As $\fM$ is coherent, and each $\ind^G_{P^-,L}\fN_i$ is coherent, the map $\beta$ factors through a \emph{finite} union 
		\[
		\xymatrix{
			\fM \ar[r]_-{\beta_0} \ar@/^2pc/[rr]^\beta & \bigcup\limits_{i\in I_0} \ind^G_{P^-,L}(\fN_i) \ar[r]& \bigcup\limits_{i\in I} \ind^G_{P^-,L} (\fN_i).
		}
		\]
		It follows that $\alpha$ must also factor through a finite union; in other words, $\fN$ is a finite union of coherent submodules so is itself coherent.
	\end{proof}
	
\subsection{Exactness of parabolic restriction}\label{ss:exact}
The goal of this section is to prove the following result.

\begin{theorem}[\cite{bezrukavnikov_parabolic_2018} Theorem 5.6]\label{thm:exact}
	The functors $\RES^G_{P,L}$ and $\IND^G_{P,L}$ are $t$-exact.
\end{theorem}

We will prove the left $t$-exactness of parabolic restriction in Proposition \ref{prop:left exact}. For the deduction of Theorem \ref{thm:exact} from Proposition \ref{prop:left exact} we follow \cite{bezrukavnikov_parabolic_2018}. Here is a brief outline of the argument. As $\RES^G_{P,L}$ is left $t$-exact, $\IND^G_{P,L}$ is right $t$-exact. But $\IND^G_{P,L}$ commutes with Verdier duality (it is the composite of a smooth pullback and proper pushforward). It follows that $\IND^G_{P,L}$ is also left $t$-exact. This is clear in the context of holonomic $D$-modules (where Verdier duality is a $t$-exact anti-auto-equivalence) but needs further justification in general - see \cite{bezrukavnikov_parabolic_2018}[Proof of Theorem 5.4]. By \cite{gunningham_generalized_2018}[Theorem 3.24] (Second Adjunction), we have that $\RES^G_{P^-,L}$ is right $t$-exact. But this argument holds for any parabolic $P$, and thus $\RES^G_{P,L}$ is right $t$-exact as required.

Thus we are reduced to proving the following result, and the remainder of the section is devoted to this proof.
\begin{proposition}\label{prop:left exact}
	If $\fG \in \bD(\ug)^{\geq 0}$, then for any parabolic $P$ with Levi factor $L$, $\RES^G_{P,L}(\fM) \in \bD(\ul)^{\geq 0}$.
\end{proposition}

Let $\fL = \RES^G_{P,L}(\fG) \in \bD(\ul)$ as in Proposition \ref{prop:left exact}. According to Lemma \ref{lem:stratifications}, to show that $\fL$ is concentrated in non-negative degrees it is enough to show that the $!$-restriction to $\ul_{(K)}$ is concentrated in non-negative degrees for each Levi subgroup $K$ of $L$ (with $\fk = \Lie(K)$). Fix such a subgroup $K$ and let
\[
\fL' := (\ul_{(K)} \to \ul)^!\fL
\]
denote the restriction. By \cite{gunningham_generalized_2018}[Proposition 2.13], we have an \'etale covering of stacks
\[
d^K_L:\uk_{\heartsuit}^{L-\reg} \to \ul_{(K)}
\]
where $\uk_\heartsuit^{L-\reg} \cong \fz(\fk)^{L-\reg} \times \ucN_K$. It follows that $\fL'$ is concentrated in non-negative degrees if and only if
\[
\fL'' := (\uk_{\heartsuit}^{L-\reg} \to \ul_{(K)})^! \fL'
\]
is concentrated in non-negative degrees. Note that 
\[
\uk_\heartsuit^{L-\reg} = \fz(\fk)^{L-\reg} \times \ucN_K
\] 
is partitioned in to locally closed subsets of the form 
\[
\fk_\heartsuit^M := \fz(\fm)^\reg \times \cN_K
\]
where $\fm = \Lie(M)$ and $M$ ranges over the (finite) set of Levi subgroups of $M$ containing $L$ such that $L\cap M = K$. Thus, applying Lemma \ref{lem:stratifications} again, we see that $\fL''$ is concentrated in non-negative degrees if and only if for each such $M$ 
\[
\fL''' = (\uk_\heartsuit^M \to \uk_\heartsuit^{L-\reg})^!\fL''
\]
is concentrated in non-negative degrees.

In summary we have shown the following:
\begin{lemma}\label{lem:strata by strata}
	Let $\fL \in \bD(\ul)$ be a bounded below complex. Then $\fL$ is concentrated in non-negative degrees if and only if
	$
	(\uk_\heartsuit^M \to \ul)^!(\fL)
	$
	is concentrated in non-negative degrees for each Levi subgroup $M$ of $G$ and $K$ of $L$ such that $K=M\cap L$.
\end{lemma}

Now fix $K,M$ Levi subgroups of $G$ with $L \cap M = K$ as above. Let $Q = M\cap P$; this is a parabolic subgroup of $M$ with Levi factor $K$. We write $\uq_\heartsuit^M$ for the substack $\fz(\fm)^\reg \times \ucN_Q$ of $\uq$. Consider the diagram of stacks:
\begin{equation}\label{diagram GPLHQK}
\xymatrix{
	\ug & \ar[l] \up \ar[r] & \ul\\
	\um_\heartsuit^\reg \ar[u] & \ar[l] \uq_\heartsuit^M \ar[r] \ar[u] & \uk_\heartsuit^M \ar[u]\\
}
\end{equation}

\begin{lemma}\label{lem:cartesian}
	The right hand square in Diagram \ref{diagram GPLHQK} is cartesian.
\end{lemma}
\begin{proof}
	Concretely, the lemma states that the following $P$-equivariant morphism of varieties is an isomorphism.\footnote{In this note, the associated bundle construction is always denoted by a superscript over the $\times$ symbol, and fiber products are written with a subscript.}
	\[
	\xymatrixrowsep{1pt}
	\xymatrix{
		P\times^Q \fq_\heartsuit^M \ar[r]^-\rho & \fp \times_{\fl} \left(L\times^K \fk_\heartsuit^M \right)\\
		(g,x) \ar@{|->}[r] & (\Ad(g)(x), \overline{g}, \overline{x})
	}
	\]
	Here, the overline notation denotes the projection from a parabolic subgroup or subalgebra to its Levi quotient. We will show that $\rho$ is bijective; we phrase the argument in terms of $\C$-points, but the same argument shows that $\rho$ is bijective on $R$-points for any commutative $\C$-algebra $R$ (alternatively, one could note that the source is connected and the target is normal, so bijection on $\C$-points implies isomorphism).
	
	Let us first note that the morphism $\rho$ is surjective. In other words, given an element $x\in \fp$ such that its image $y:=\overline{x} \in \fl$ lies in some $L$-conjugate of $ \fk_\heartsuit^M$, then we must show that $x$ lies in some $P$-conjugate of $\fq_\heartsuit^M$. But we can always conjugate $x$ by an element of $P$ so that $x_s$ lies in the Levi subalgebra $\fl$ and thus $x_s= y_s$. Conjugating further by an element of $L$, we may assume that $y_s \in \fz(\fm)^\reg$. Thus $x_n \in C_{\fp}(x_s) = \fp \cap \fm = \fq$, so $x = x_s + x_n \in \fq_\heartsuit^M$ as required. Similarly, to check that $\rho$ is injective boils down to the fact that if we have $(g,x) \in P \times \fq_\heartsuit^M$ such that $\ls gx = x$, then $g\in C_P(x) = P \cap C_G(x) \subseteq P \cap M = Q$. Thus $\rho$ is bijective at the set-theoretic level. As the target is normal and the source connected, it follows that $\rho$ is an isomorphism as required.
\end{proof}

We are now ready to finish the proof of Proposition \ref{prop:left exact}. Let $\fG \in \bD(\ug)^{\geq 0}$ and $\fL := \RES^G_{P,L}(\fG)$. We must show that $\fL$ is concentrated in non-negative degrees. By Lemma \ref{lem:strata by strata} it is enough to show that $(\uk_\heartsuit^M \to \ul)^!\fL$ is concentrated in non-negative degrees for each pair $M,K$ of Levis with $M\cap L = K$. Then by Lemma \ref{lem:cartesian} and base-change, we are reduced to showing that
\begin{equation}\label{to show}
\left(\RES^M_{Q,K}\right)^\reg_\heartsuit (\um_\heartsuit^\reg \to \ug)^! \fG
\end{equation}
is concentrated in non-negative degrees, where $\left(\RES^M_{Q,K}\right)^\reg_\heartsuit$ denotes the functor given by pull-push with respect to the bottom row of Diagram \ref{diagram GPLHQK}.
Note that $\um_\heartsuit^\reg \to \ug$ is a composition of an \'etale covering and a locally closed embedding, thus the corresponding $!$-pullback is left t-exact. Thus we are reduced to showing that $\left(\RES^M_{Q,K}\right)^\reg_\heartsuit$ is left t-exact. 

The bottom row of Diagram \ref{diagram GPLHQK} can also be written as
\[
\xymatrix{
	\fz(\fm)^\reg \times \ucN_M & \ar[l] \fz(\fm)^\reg \times \ucN_Q \ar[r] & \fz(\fm)^\reg \times \ucN_K
}
\]
Thus $\left(\RES^M_{Q,K}\right)^\reg_\heartsuit$ is an external product of the identity functor on $\bD(\fz(\fm)^\reg)$ and the usual parabolic restriction functor between $\bD(\ucN_M)$ and $\bD(\ucN_K)$. Thus we are reduced to showing that parabolic restriction is left $t$-exact for nilpotent orbital $D$-modules. 

At this point, we may appeal to one of the various proofs of (left) $t$-exactness for parabolic restriction of orbital (or character) sheaves in the literature, e.g. \cite{ginzburg_induction_1993}[Theorem 4.1]\cite{mirkovic_character_2004}[Theorem 7.3]. Alternatively, we may run the above argument again but now for the Fourier transform $\F_{\ug} \fG$, where $\fG$ is now assumed to be a nilpotent orbital $D$-module. In this way, we reduce to the situation where both $\fG$ and its Fourier transform are supported on the nilpotent cone. Such a $D$-module is readily seen to be cuspidal (see e.g. \cite{mirkovic_character_2004}[Theorem 4.7]) from which the result follows.

	\section{Generalized Springer Theory}\label{sectionabelian}
	In this section we apply the theory of parabolic induction and restriction to the give a description of the abelian category $\bM(\ug)$. 
	
	\subsection{The recollement situation associated to parabolic induction and restriction}\label{ssrecollement} To begin this section, we will show how our results on parabolic induction and restriction give rise to a recollement situation for the category $\bM(\ug)$. Later, we will see that the recollement is, in fact, an orthogonal decomposition.

\begin{definition}
	For a given parabolic subgroup $P$ of $G$ and Levi  factor $L \subseteq P$, we define $\bM(\ug)_{\nless(L)}$ to be the full subcategory of $\bM(\ug)$ consisting of objects $\fM$ for which $\RES^G_{P,L}(\fM)$ is cuspidal. We define $\bM(\ug)_{\nleq(L)}$ to be the further subcategory consisting of objects such that $\RES^G_{P,L}(\fM)$ is zero (recall that the zero object is considered cuspidal here).\footnote{We note that \cite{lusztig_intersection_1984}[Theorem 9.2] implies that if there exists a non-zero cuspidal object in $\bM(\ul)$, then all parabolic subgroups $P$ containing $L$ as a Levi factor are conjugate (see \cite{li_derived_2018}[Lemma 3.1]). It follows that these subcategories indeed depend only on the conjugacy class of $L$, independently of $P$.} 
\end{definition}

\begin{lemma}\label{lemmarestrictadjoint}
	Parabolic induction and restriction restrict to a pair of adjoint functors: 
	\[
	\xymatrixcolsep{5pc}
	\xymatrix{
		\ind^G_{P,L}: \bM(\ul)_{cusp} \ar@/_0.3pc/[r] &  \ar@/_0.3pc/[l] \bM(\ug)_{\nless(L)}:  
		\res_{P,L}^G
	} 
	\] 
\end{lemma}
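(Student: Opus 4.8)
The plan is to establish the two inclusions
\[
\res^G_{P,L}\bigl(\bM(\ug)_{\geq(L)}\bigr)\subseteq\bM(\ul)_{cusp},\qquad \ind^G_{P,L}\bigl(\bM(\ul)_{cusp}\bigr)\subseteq\bM(\ug)_{\geq(L)},
\]
from which the statement follows immediately: since $\bM(\ul)_{cusp}$ and $\bM(\ug)_{\geq(L)}$ are full subcategories and both functors already act between the ambient abelian categories (Proposition \ref{propresexact}, Corollary \ref{corollarysecond}), the adjunction $\ind^G_{P,L}\dashv\res^G_{P,L}$ restricts automatically.

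For the first inclusion I would fix $\fM\in\bM(\ug)_{\geq(L)}$ and a proper Levi subgroup $L'$ of $L$, realised as the Levi factor of a parabolic $R$ of $L$, and show $\res^L_{R,L'}\res^G_{P,L}\fM\simeq 0$. Taking $P''\subseteq P$ to be the preimage of $R$ under $P\twoheadrightarrow L$ --- a parabolic subgroup of $G$ with Levi factor $L'$ --- Proposition \ref{propositionindresfactor} (applied with $P''\subseteq P$ in the role of $P\subseteq P'$) gives $\res^L_{R,L'}\res^G_{P,L}\simeq\res^G_{P'',L'}$. Since $L'\subsetneq L$ contains no conjugate of $L$, Corollary \ref{corollarypartition}(3) (equivalently Theorem \ref{theoremkernel}) forces $\res^G_{P'',L'}\fM\simeq 0$; as $L'$ was an arbitrary proper Levi of $L$, this means $\res^G_{P,L}\fM$ is cuspidal.

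For the second inclusion I would fix a cuspidal $\fN\in\bM(\ul)$ and a parabolic $Q$ of $G$ whose Levi factor $M$ contains no conjugate of $L$, and show $\res^G_{Q,M}\ind^G_{P,L}\fN\simeq 0$. Because all the functors in play are $t$-exact, the Mackey filtration of $\RES^G_{Q,M}\IND^G_{P,L}$ descends to a finite filtration of $\res^G_{Q,M}\ind^G_{P,L}\fN$ in $\bM(\um)$ whose subquotients, by Proposition \ref{propositionmackey}, are
\[
\ST^w(\fN)\;\simeq\;\ind^M_{M\cap\ls\dw P,\,M\cap\ls\dw L}\;\res^{\ls\dw L}_{Q\cap\ls\dw L,\,M\cap\ls\dw L}\;\dw_\ast\fN,\qquad w\in\quot QGP.
\]
Conjugation preserves cuspidality, so $\dw_\ast\fN$ is a cuspidal object of $\bM(\ls\dw\ul)$; and since $M$ contains no conjugate of $L$, the Levi $M\cap\ls\dw L$ is a \emph{proper} Levi of $\ls\dw L$ for every $w$ (otherwise $\ls\dw L\subseteq M$). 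Hence $\res^{\ls\dw L}_{Q\cap\ls\dw L,\,M\cap\ls\dw L}\dw_\ast\fN\simeq 0$ by the definition of cuspidality, so every subquotient $\ST^w(\fN)$ vanishes, and therefore so does $\res^G_{Q,M}\ind^G_{P,L}\fN$.

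I do not expect a serious obstacle --- the real content sits in Theorem \ref{theoremkernel} and the Mackey formula --- but the one point to be careful about is that, at this stage of the paper, the abelian Mackey formula is available only in its filtered form (Proposition \ref{propositionmackey}) and not yet as an honest direct sum; the second inclusion must therefore be argued via the vanishing of \emph{all} subquotients of a finite filtration rather than via a decomposition, which is harmless since such a filtration has vanishing total object. The remaining care is purely in matching up the various parabolics and Levi factors when invoking Propositions \ref{propositionindresfactor} and \ref{propositionmackey} and Corollary \ref{corollarypartition}.
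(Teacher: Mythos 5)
Your proposal is correct and follows essentially the same line of reasoning the paper uses, just spelled out in more detail: the first inclusion via the factorization of Proposition \ref{propositionindresfactor} together with the characterization of $\bM(\ug)_{\geq(L)}$ from Corollary \ref{corollarypartition}, and the second via term-by-term vanishing of the Mackey filtration. Your observation that at this point the Mackey formula is only available as a filtration --- so one must argue via vanishing of all subquotients rather than via a direct-sum decomposition --- is exactly the right caution, and is implicit in the paper's phrase ``the terms in the Mackey formula \ldots{} all vanish.''
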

\begin{proof}
	By definition, parabolic restriction takes objects of $\bM(\ug)_{\nless(L)}$ to cuspidal objects on $L$. On the other hand, if $\fM$ is a cuspidal object of $\bM(\ul)$, then it follows from the Mackey formula (Proposition \ref{propositionmackey}) that $\ind^G_{P,L}(\fM)$ is in $\bM(\ug)_{\geq(L)}$. Indeed, to check that $\res^G_{P,L}\ind^G_{P,L}(\fM)$ is cuspidal, take $M$ to be a Levi subgroup of $L$, and $Q$ a parabolic subgroup of $G$ contained in $P$ with $M$ as a Levi factor. Then the terms in the Mackey formula for $\res^G_{Q,M}\ind^G_{P,L}(\fM)$ all vanish, as required.
\end{proof}	

Note that the kernel of $\res^G_{P,L}$ restricted to $\bM(\ug)_{\nless(L)}$ is $\bM(\ug)_{\nleq(L)}$. Let us denote by $\bM(\ug)_{(L)}$ the quotient category of $\bM(\ug)_{\nless(L)}$ by $\bM(\ug)_{\nleq(L)}$. According to the results of Appendix \ref{appendixrecollement}, we may deduce: 

\begin{proposition}
There is a recollement situation:
\[
\xymatrixcolsep{7pc}
\xymatrix{
	\bM(\ug)_{\nleq(L)} \ar[r]^{\ffi_{\nleq(L)\ast}} & \ar@/_2pc/[l]_{\ffi_{\nleq (L)}^\ast}\ar@/^2pc/[l]^{\ffi_{\nleq(L)}^!} \bM(\ug)_{\nless(L)} \ar[r]^{\fj_{(L)}^\ast} & \ar@/^2pc/[l]^{\fj_{(L)!}} \ar@/_2pc/[l]_{\fj_{(L)\ast}} \bM(\ug)_{(L)},
}
\]
\end{proposition}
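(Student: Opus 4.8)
The plan is to deduce the statement from the general recollement criterion of Appendix~\ref{appendixrecollement} (the ``Barr--Beck'' upgrade of an adjunction to a recollement), whose hypotheses are: (i) a biadjoint pair of exact functors $\ind^G_{P,L}\dashv\res^G_{P,L}\dashv\ind^G_{P,L}$ between $\bM(\ul)_{cusp}$ and $\bM(\ug)_{\geq(L)}$, and (ii) an identification of $\ker\bigl(\res^G_{P,L}\bigr)$ with the Serre subcategory $\bM(\ug)_{>(L)}$. So the work reduces to assembling these two inputs from results already in hand, together with some routine ``niceness'' checks, and then invoking the appendix.

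For (i): Lemma~\ref{lemmarestrictadjoint} already gives the adjunction $\ind^G_{P,L}\dashv\res^G_{P,L}$ on the restricted subcategories. The second adjunction (Theorem~\ref{theoremsecond}) provides $\res^G_{P^-,L}\dashv\ind^G_{P,L}$, and the independence of the parabolic (the abelian consequence of Theorem~\ref{maintheoremindres}) identifies $\res^G_{P^-,L}\simeq\res^G_{P,L}$ and $\ind^G_{P^-,L}\simeq\ind^G_{P,L}$; hence $\res^G_{P,L}$ is bi-adjoint to the single functor $\ind^G_{P,L}$, and in particular it preserves all limits and all colimits. Exactness of $\res^G_{P,L}$ is Proposition~\ref{propresexact} and exactness of $\ind^G_{P,L}$ is Corollary~\ref{corollarysecond}. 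For (ii): by Theorem~\ref{theoremkernel} an object $\fM\in\bM(\ug)$ is killed by $\res^G_{P,L}$ exactly when $SS(\fM)\subseteq\comm(\fg)_{\nleq(L)}$; intersecting with the defining condition of $\bM(\ug)_{\geq(L)}$ and using the poset identity $\comm(\fg)_{\geq(L)}\cap\comm(\fg)_{\nleq(L)}=\comm(\fg)_{>(L)}$ gives $\ker\bigl(\res^G_{P,L}|_{\bM(\ug)_{\geq(L)}}\bigr)=\bM(\ug)_{>(L)}$, exactly as remarked after Corollary~\ref{corollarypartition}. I would also record the ambient smallness facts: $\bM(\ug)$ is a locally noetherian Grothendieck category (a full Serre subcategory of $\fD_\fg$-modules closed under coproducts, cf. the proof of Corollary~\ref{corollarysecond}); its subcategory $\bM(\ug)_{\geq(L)}$ of objects with singular support in the closed conical subset $\comm(\fg)_{\geq(L)}$ is closed under subquotients and coproducts and so is again Grothendieck; and $\bM(\ug)_{>(L)}$, being the kernel of the limit- and colimit-preserving functor $\res^G_{P,L}$, is closed under both products and coproducts.

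With these inputs, the appendix does the rest: the Serre quotient $\bM(\ug)_{(L)}=\bM(\ug)_{\geq(L)}/\bM(\ug)_{>(L)}$ with its exact quotient functor $\fj^\ast_{(L)}$ acquires a fully faithful left adjoint $\fj_{(L)!}$ from the monadic comparison functor for $\ind^G_{P,L}\dashv\res^G_{P,L}$ (which identifies $\bM(\ug)_{(L)}$ with the category of modules over the monad $\res^G_{P,L}\ind^G_{P,L}$ on $\bM(\ul)_{cusp}$, precisely because $\res^G_{P,L}$ preserves reflexive coequalizers), and dually a fully faithful right adjoint $\fj_{(L)\ast}$ from the comonadic comparison for $\res^G_{P,L}\dashv\ind^G_{P,L}$; the inclusion $\ffi_{>(L)\ast}$ then inherits its left adjoint $\ffi^\ast_{>(L)}$, its right adjoint $\ffi^!_{>(L)}$, and all the recollement identities from the standard glueing formalism (the four recollement axioms beyond the existence of adjoints—full faithfulness of $\fj_{(L)!},\fj_{(L)\ast}$, exactness, and $\operatorname{im}\ffi_{>(L)\ast}=\ker\fj^\ast_{(L)}$—are formal once $\bM(\ug)_{>(L)}$ is the Serre kernel and all six functors exist). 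The one genuinely load-bearing point is the existence of all six adjoints, and this rests entirely on the \emph{biadjointness} of $\res^G_{P,L}$ furnished by the second adjunction theorem: without it one obtains only a one-sided localization (a semiorthogonal decomposition) and not a full recollement. In this sense the ``hard part'' of the proposition has already been carried out in Theorems~\ref{theoremsecond} and~\ref{maintheoremindres} and Corollary~\ref{corollarypartition}; what remains here is the formal packaging performed in Appendix~\ref{appendixrecollement}.
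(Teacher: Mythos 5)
Your proposal follows essentially the same route the paper intends: feed the adjunction of Lemma~\ref{lemmarestrictadjoint}, the exactness results (Proposition~\ref{propresexact}, Corollary~\ref{corollarysecond}), and the kernel identification (Theorem~\ref{theoremkernel}, Corollary~\ref{corollarypartition}) into the abstract machine of Appendix~\ref{appendixrecollement} (Theorem~\ref{theoremcoloc}), which does the rest. Your references are correct, and the poset computation $\comm(\fg)_{\geq(L)}\cap\comm(\fg)_{\nleq(L)}=\comm(\fg)_{>(L)}$ identifying the kernel of $\res^G_{P,L}$ on $\bM(\ug)_{\geq(L)}$ with $\bM(\ug)_{>(L)}$ is also correct.

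I would, however, push back on your diagnosis that ``the one genuinely load-bearing point \ldots rests entirely on the biadjointness of $\res^G_{P,L}$, \ldots without it one obtains only a semiorthogonal decomposition.'' This misreads what the appendix actually needs. Assumptions~\ref{assumptions} ask only for a \emph{left} adjoint $F^L = \ind^G_{P,L}$ with both $F$ and $F^L$ exact and coproduct-preserving. Under these hypotheses the right section $\fj_{(L)\ast}$ arises from Gabriel localization theory (the kernel of an exact, coproduct-preserving functor on a Grothendieck category is localizing), not from a second adjunction; $\fj_{(L)!}$ arises from the Barr--Beck comparison for the monad $\res\circ\ind$; and $\ffi^\ast_{>(L)}$, $\ffi^!_{>(L)}$ come from the cokernel of the unit and the kernel of the counit, respectively. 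So the recollement is already present once exactness is in hand. The second adjunction (Theorem~\ref{theoremsecond}) enters the paper's development only as a convenient way to prove that $\ind$ is $t$-exact (Corollary~\ref{corollarysecond}), and the paper even notes this can be done without it. Your alternative comonadic construction of $\fj_{(L)\ast}$ does work, but it is more structure than required and it shifts the emphasis away from the actual mechanism. This is a matter of attribution rather than a gap; the proof as you have assembled it is valid.
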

In particular:
\begin{itemize}
	\item Every object $\fM \in \bM(\ug)_{\nless (L)}$ fits in to an exact sequence:
	\begin{equation}\label{equationses}
	\xymatrix{ 
		\fj_{(L)!} \fj_{(L)}^\ast \fM \ar[r] &
		\fM \ar[r] &
		\ffi_{\nleq(L)\ast} \ffi_{\nleq(L)}^\ast \fM \ar[r] &
		0
	}
	\end{equation}
	\item The projection functor $\fj_{(L)!} \fj_{(L)}^\ast \fM$ is given by:
	\[
	\coker\left(\ind^G_{P,L}\res^G_{P,L}\ind^G_{P,L}\res^G_{P,L}(\fM) \too \ind^G_{P,L}\res^G_{P,L}(\fM) \right)
	\]
	where the map is given by the difference of the two morphisms coming from the counit of the adjunction. 
	\item 
The quotient category $\bM(\ug)_{(L)}$ can be identified via $\fj_{(L)!}$ with the subcategory of $\bM(\ug)$ consisting of quotients of objects which are parabolically induced from cuspidals on $\ul$. 
\item Any object $\fM \in \bM(\ug)_{\nless(L)}$ can be written as an extension of an object in $\bM(\ug)_{\nleq(L)}$ by a subquotient of an object in the essential image of $\ind^G_{P,L}|_{cusp}$.
\end{itemize}

It follows that we can express any object in $\bM(\ug)$ as an iterated extension of (subquotients of) objects which are parabolically induced from cuspidals on various Levis. This can be seen by the following algorithm:

\begin{enumerate}
	\item First, pick a minimal Levi $L$ such that $\fM \in \bM(\ug)_{\nless(L)}$. Note that $L$ could be equal to $G$ (if $\fM$ is cuspidal) or to the maximal torus $H$ (if $\res^G_{B,H}(\fM)$ is non-zero).
	\item If $L=G$, stop.
	\item Consider the exact sequence in (\ref{equationses}). 
	\item Replace $\fM$ by $\ffi_{\nleq(L) \ast} \ffi_{\nleq(L)}^\ast \fM$, and go back to step one.
\end{enumerate}
\begin{remark}
	In Subsection \ref{subsectionsteinbergweyl} we will see that the situation is much nicer: the monad $\lrsub{L,P}{\st}{P,L}$ restricted to cuspidal objects, is equivalent to the group monad of $W_{G,L}$. As explained in Appendix \ref{subsectionfinite}, this means that every object in the essential image of $\bM(\ug)_{(L)}$ is a direct summand (as opposed to just a quotient) of a parabolic induction from some cuspidal on $L$. 
\end{remark} 

\begin{remark}
	There is an analogous recollement situation in the derived setting too, see \cite{gunningham_derived_2017}.
\end{remark}

\subsection{The Springer block}\label{sec:the-springer-block}
In this subsection, we show how to deduce the ``non-generalized'' part of the generalized Springer theory presented in this paper (this is sufficient to prove the main results in the case $G=GL_n$). Fix a Borel subgroup $B$ containing a maximal torus $H$ with Weyl group $W$. To simplify notation we will write $\ind = \ind^G_{B,H}$, $\res=\res^G_{B,H}$, $\st = \res \circ \ind$, and $\St{}{}=\St{B}{B}$. 

Note that the adjunction $\ind,\res$ fits in to the set-up of Appendix \ref{appendixrecollement}. With that in mind, let $\bM(\ug)_{>(H)}$ denote the Serre subcategory of $\bM(\ug)$ consisting of objects killed by the functor $\res$, and $\bM(\ug)_{(H)}$ the quotient category of $\bM(\ug)$ by $\bM(\ug)_{>(H)}$. As explained in \ref{appendixrecollement}, we can identify $\bM(\ug)_{(H)}$ with the full subcategory of $\bM(\ug)$ consisting of quotients of objects in the essential image of $\ind$ (we refer to this subcategory as the \emph{Springer block}). Moreover, by the Barr-Beck Theorem we have the following:
\begin{lemma}\label{lemmabb}
The functor $\res$ identifies $\bM(\ug)_{(H)}$ with modules for the monad $\st = \res \circ  \ind$ acting on $\bM(\fh)$.
\end{lemma}

The Weyl group $W$ acts on $\bM(\fh)$, and $W$-equivariant objects are the same thing as modules for the monad $W_\ast$ (see Appendix \ref{subsectionfinite}). The following theorem captures the essence of the main results of this paper.
\begin{theorem}\label{theoremheartspringer}
	There is an equivalence of monads
	$
	\st \simeq W_\ast
	$.
\end{theorem}

Lemma \ref{lemmabb} then gives the following immediate corollary:
\begin{corollary}\label{corollaryabelianspringer}
	There is an equivalence of categories 
	\[
	\bM(\ug)_{(H)} \simeq \bM(\fh)^W \simeq \fD_{\fh}-\module
	\]
\end{corollary}
We recover the classical result of Springer theory from Theorem \ref{corollaryabelianspringer} by restricting to the subcategory $\bM(\ucN)$ of $\bM(\ug)$ consisting of objects supported on the nilpotent cone. 
\begin{corollary}[\cite{borho_partial_1983}]\label{corollaryspringernilp}
	There is an equivalence of categories 
	\[
	\bM(\ucN_G)_{Spr} \simeq \Rep(W)
	\]
In particular, the set of irreducible representations of $W$ inject in to the set of simple objects of $\bM(\ucN_G)_{Spr}$, which are indexed by pairs $(\cO,\cL)$ of a nilpotent orbit $\cO$ and a simple equivariant local system $\cL$ on $\cO$.
\end{corollary}
\begin{remark}
In the case $G=GL_n$ or $PGL_n$, the Springer block is the whole of $\bM(\ug)$, and thus the main result Theorem \ref{theoremabelian} is implied by Corollary \ref{corollaryabelianspringer} in this case. In general (e.g. for $G=SL_2$), there are other blocks that must be taken in to account.
\end{remark}

In order to prove Theorem \ref{theoremheartspringer}, we will identify the abelian category of colimit preserving endofunctors of $\bM(\fh) \simeq \fD_\fh-\module$ with $\fD_{\fh}$-bimodules, or equivalently, as objects in $\bM(\fh\times \fh)$. We refer to an objects of $\bM(\fh\times \fh)$ as \emph{integral kernels}. Given an integral kernel $\fK$ the corresponding functor is given by 
\[
\fM \mapsto p_{2\ast}(p_1^\circ(\fM) \otimes^\circ \fK)
\]
where $p_1,p_2:\fh\times \fh \to \fh$ are the projections and $\otimes^\circ$ denotes the tensor product of left $D$-modules given by $\otimes_{\cO}$ on the underlying $\cO$-modules. Similarly, the composition of endofunctors corresponds to \emph{convolution} of integral kernels:
\[
\fK_1 \ast \fK_2 := p_{13 \ast} \left(p_{12}^\circ(\fK_1) \otimes^\circ p_{23}^\circ(\fK_2)\right)
\]
where $p_{12}$, $p_{23}$, and $p_{13}$ are the projection maps $\fh^3 \to \fh^2$. Thus monads acting on $\bM(\fh)$ are the same as algebra objects in the monoidal category $\bM(\fh\times \fh)$, under convolution.\footnote{More algebraically, we could just think of a monad acting on $\bM(\fh)=\fD_{\fh}-\module$ as a $\fD_{\fh}$-ring. However, it will be more convenient to think of it geometrically as an object in $\bM(\fh\times \fh)$. For example, we will want to localize over certain open subsets of $\fh\times \fh$.}

Let $\fK_{\st}$ denote the integral kernel corresponding to the monad $\st$.
The integral kernel corresponding $w_\ast$ (for $w\in W$) is given by $\cO_w:= \Gamma_{w\ast} \cO_\fh$ where $\Gamma_w: \fh \to \fh \times \fh$ is the graph of the $w$-action. 
In this language, the Mackey formula says that $\fK_{\st} \in \bM(\fh\times \fh)$ carries a filtration such that the associated graded object is $\bigoplus_w \cO_w$. Theorem \ref{theoremheartspringer} states that the Mackey filtration on $\fK$ is canonically split, and moreover the splitting $\fK \simeq \bigoplus_w \cO_w$ is an isomorphism of monads.

\begin{remark}
	The corresponding statement is false in the derived category setting: the Mackey filtration is non-split even in the $SL_2$ case, as shown in \cite{gunningham_derived_2017}.
\end{remark}

\begin{remark}
One can check that the kernel $\fK_{\st}$ is given by $\cH^0(\alpha\times \beta)_\ast\omega_{\St{}{}}$, where
\[
\xymatrix{
	\uh 	&\St{}{}\ar[r]^-\alpha \ar[l]_-\beta &	\uh 
} 	
\]
are the projection maps on the Steinberg stack. Thus the $D$-module $\fK_{\st}$ over $\fh\times \fh$ records the fiberwise Borel-Moore homology (in a certain degree) of $\St{}{}$. The monad structure on the functor $\st$ reflects the convolution structure on Borel-Moore homology of the fibers. Theorem \ref{theoremheart} can be thought of as a ``relative'' version of the statement that the Borel-Moore homology of the nilpotent Steinberg stack (which is the fiber of $\St{}{}$ over $(0,0)\in \fh\times \fh$)  gives the group algebra of the Weyl group (see Chriss-Ginzburg \cite{chriss_representation_1997}). The proof uses the same basic principle (as do all proofs in Springer theory): the fact that there is an honest Weyl group action over the regular locus, and specialize to the non-regular locus. The difference between our approach and that of other authors, is that the focus is on the entire integral kernel representing the Steinberg functor, rather than the endomorphism algebra of a single object (the Springer sheaf).
\end{remark}

\subsection{Proof of Theorem \ref{theoremheartspringer}}\label{sec:proof-of-theorem-reftheoremheart}
The proof is divided in to three elementary lemmas: the first implies that there \emph{exists} a splitting of the Mackey filtration on $\fK_{\st}$; the second implies that there is a canonical splitting (as monads) over the regular locus; the third implies that such a splitting extends uniquely. 

\begin{lemma}\label{lemmaextvanish2}
	Let $v, w \in W$, $v \neq w$. Then 
	\[
	\Ext^i_{\bM(\fh\times \fh)}(\cO_v, \cO_w) = 0,
	\]
	for $i= 0,1$.
\end{lemma}

\begin{lemma}\label{lemmaregmonad}
	There is an isomorphism of algebra objects:
	\[
	\fK_{\st}|_{\fh^\reg \times \fh^{\reg}} \simeq \bigoplus_{w\in W} \cO_w |_{\fh^\reg\times\fh^\reg}.
	\]
\end{lemma}

\begin{lemma}\label{lemmaextend}
	We have:
	\[
	\bbHom_{\bM(\fh\times \fh)} (\cO_v, \cO_w) \simeq \bbHom_{\bM(\fh^\reg \times \fh^\reg)}(\cO_v \mid_{\reg}, \cO_w \mid_\reg)
	\]
\end{lemma}

Let us deduce Theorem \ref{theoremheartspringer} now, leaving the proofs of the various lemmas for the end of this subsection. Lemma \ref{lemmaextvanish2} shows that there must be \emph{some} splitting of the Mackey filtration, i.e. there exists an isomorphism $\fK_{\st} \simeq \bigoplus_w \cO_w$. Lemma \ref{lemmaregmonad} defines an isomorphism of monads
\[
\phi^\reg: \fK_{\st}|_{\fh^\reg \times \fh^\reg} \to \bigoplus_w \cO_w|_{\fh^\reg \times \fh^\reg}
\]
after restricting to the regular locus. Lemma \ref{lemmaextend} shows that this isomorphism extends to an isomorphism in $\bM(\fh\times \fh)$:
\[
\phi: \fK_{\st} \to \bigoplus_w \cO_w.
\]

It remains to show that the monad structures on $\st$ and on $W_\ast$ agree. The monad structure on the object $\bigoplus \cO_w$ is given by a collection of isomorphisms
\[
\tau_{v,w}: \cO_w \ast \cO_v \xrightarrow{\sim} \cO_{wv},
\]
for each pair $(v,w) \in W^2$. Similarly, the monad structure on $\st$ induces a (a priori different) monad structure on $\left(W_{G,L}\right)_\ast$ via the natural isomorphism $\phi$; we denote the corresponding structure morphisms by $\tau'_{v,w}$. Lemma \ref{lemmaregmonad} implies that $\tau = \tau'$ after restricting to the regular locus (we know $\phi$ is an isomorphism of monads there). On the other hand, by Lemma \ref{lemmaextend}, if $\tau$ and $\tau'$ agree on the regular locus, then they must agree on all of $\fh\times \fh$. Thus $\phi$ is an isomorphism of monads as required. This completes the proof of Theorem \ref{theoremheartspringer}.

\begin{proof}[Proof of Lemma \ref{lemmaextvanish2}]
	Let $\fh^{v,w} = \left\{ x \in \fh \mid v(x) = w(x) \right\}$. We have the cartesian diagram:
	\[
	\xymatrix{
		\fh^{v,w} \ar[r]^{\gamma_w} \ar[d]_{\gamma_v} & \fh \ar[d]^{\Gamma_v} \\
		\fh \ar[r]_{\Gamma_w} & \fh \times \fh
	}
	\]
	where $\gamma_w = \gamma_v$ is the inclusion of $\fh^{v,w}$ into $\fh$. By our assumption $w \neq v$, $\fh^{v,w} \neq \fh$. Let $d$ denote the codimension of $\fh^{v,w}$ in $\fh$.
	We compute:
	\begin{align*}
	R\bbHom_{\bD(\fh\times \fh)}( \cO_v,\cO_w) & =                                                                     
	R\bbHom_{\bD(\fh\times \fh)}(\Gamma_{v\ast} \cO_{\fh},\Gamma_{w\ast} \cO_{\fh}) \\
	& = R\bbHom_{\bD(\fh)}(\cO_{\fh}, \Gamma_v^!\Gamma_{w\ast}\cO_{\fh})    \\
	& = R\bbHom_{\bD(\fh)}(\cO_{\fh}, \gamma_{w\ast}\gamma^{!}_v \cO_{\fh}) \\
	& = R\bbHom_{\bD(\fh^{v,w})}(\gamma_w^\ast \cO_{\fh},                   
	\gamma_v^! \cO_{\fh})
	\\
	& = R\bbHom_{\bD(\fh^{v,w})}(\cO_{\fh^{v,w}}[d],                        
	\cO_{\fh^{v,w}} [-d])
	= H_{dR}^{\ast-2d}(\fh^{v,w}).
	\end{align*}
	As $d>0$, $\Ext^1( \cO_{v},\cO_{w})= H^{1-2d}_{dR}(\fh^{v,w}) = 0$ as required.
\end{proof}

\begin{proof}[Proof of Lemma \ref{lemmaregmonad}]
	By Proposition \ref{propositiondiagram}, the diagram which defines parabolic induction and restriction restricts to the $W_{G,L}$-Galois cover:
	\[
	\fh^\reg \times \ucN_L \simeq \ul_{\heartsuit}^\reg \simeq \ul_\heartsuit^\reg \to \ug_{(L)}
	\]
	In particular, the restriction of the Steinberg stack $\left(\St{P}{P}\right)_{(L)}$ is equivalent to the action groupoid of $W$ on $\ul^\reg_\heartsuit$. This directly translates in to the claimed result on $\fK_{\st}$.
\end{proof}	
The proof of Lemma \ref{lemmaextend} is a straightforward calculation (the result only depends on the fact that $\fh$ and $\fh^\reg$ are both connected).
\begin{remark}
	Lemma \ref{lemmaextend} does not hold in the derived category (i.e. the $R\bbHom$ complexes are not quasi-isomorphic). This is why one cannot deduce the corresponding statement to Theorem \ref{theoremheart} in the derived setting.
\end{remark}

	\subsection{Orthogonality of parabolic induction from non-conjugate Levi subgroups}
The following result is an important step in the orthogonal decomposition of Theorem \ref{theoremabelian}.
\begin{proposition}\label{propositionsplit}
	Suppose $L$ and $M$ are non-conjugate Levi subgroups. Given $\fN \in \bD^b_{coh}(\ul)_{cusp}$, and $\fM \in \bD^b_{coh}(\um)_{cusp}$ we have that $\IND^G_{P,L}(\fN)$ and $\IND^G_{Q,L}(\fM)$ are orthogonal, i.e.
	\[
	R\bbHom(\IND^G_{P,L}(\fN), \IND^G_{Q,L}(\fM)) \simeq R\bbHom(\IND^G_{Q,L}(\fM),\IND^G_{P,L}(\fN)) \simeq 0.
	\]
\end{proposition}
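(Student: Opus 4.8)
The plan is to combine the second adjunction (Theorem~\ref{theoremsecond}), the Mackey formula (Proposition~\ref{propositionmackey}), and the basic fact that a cuspidal object is killed by any proper parabolic restriction. The slogan is: adjoint across one of the $\IND$'s to turn the $R\bbHom$ into a computation of a Steinberg functor applied to a cuspidal, then apply Mackey and dispose of each piece.

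Concretely, first I would use the adjunction $\IND^G_{P,L}\dashv\RES^G_{P,L}$ to rewrite
\[
R\bbHom\big(\IND^G_{P,L}(\fN),\,\IND^G_{Q,M}(\fM)\big)\;\simeq\;R\bbHom\big(\fN,\,\RES^G_{P,L}\IND^G_{Q,M}(\fM)\big),
\]
so the first vanishing reduces to $R\bbHom(\fN,\RES^G_{P,L}\IND^G_{Q,M}(\fM))\simeq 0$. Since $R\bbHom(\fN,-)$ is triangulated and $\RES^G_{P,L}\IND^G_{Q,M}(\fM)$ is a finite iterated extension, via the Mackey filtration of the composite $\RES^G_{P,L}\IND^G_{Q,M}$, of its associated graded pieces
\[
X_w\;=\;\IND^L_{L\cap\ls\dw Q,\;L\cap\ls\dw M}\;\RES^{\ls\dw M}_{P\cap\ls\dw M,\;L\cap\ls\dw M}\;{\dw}_\ast(\fM),\qquad w\in\quot{P}{G}{Q},
\]
it suffices to show $R\bbHom(\fN,X_w)\simeq 0$ for each $w$. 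Here I am applying Proposition~\ref{propositionmackey} with the roles of $(P,L)$ and $(Q,M)$ exchanged relative to its statement.

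The heart of the argument is a dichotomy on $w$. If $\ls\dw M\nsubseteq L$, then $L\cap\ls\dw M$ is a proper Levi of $\ls\dw M$, so $P\cap\ls\dw M$ is a proper parabolic of $\ls\dw M$; as ${\dw}_\ast(\fM)$ is again cuspidal, the inner restriction $\RES^{\ls\dw M}_{P\cap\ls\dw M,\,L\cap\ls\dw M}{\dw}_\ast(\fM)$ already vanishes, hence $X_w\simeq 0$. If instead $\ls\dw M\subseteq L$, then the inclusion is proper because $L$ and $M$ are non-conjugate, the inner restriction is the identity, and $X_w\simeq\IND^L_{L\cap\ls\dw Q,\,\ls\dw M}({\dw}_\ast\fM)$ is a parabolic induction of a cuspidal from the \emph{proper} Levi $\ls\dw M$ of $L$. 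Now applying the second adjunction \emph{inside} $L$, namely $\RES^L_{(L\cap\ls\dw Q)^-,\,\ls\dw M}\dashv\IND^L_{L\cap\ls\dw Q,\,\ls\dw M}$, gives
\[
R\bbHom(\fN,X_w)\;\simeq\;R\bbHom\big(\RES^L_{(L\cap\ls\dw Q)^-,\,\ls\dw M}(\fN),\;{\dw}_\ast\fM\big)\;\simeq\;0,
\]
the last vanishing because $\fN$ is cuspidal on $\ul$ and $(L\cap\ls\dw Q)^-$ is a proper parabolic of $L$. In all cases $R\bbHom(\fN,X_w)\simeq 0$, so $R\bbHom(\fN,\RES^G_{P,L}\IND^G_{Q,M}(\fM))\simeq 0$, which is the first asserted orthogonality. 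The second, $R\bbHom(\IND^G_{Q,M}(\fM),\IND^G_{P,L}(\fN))\simeq 0$, follows by the identical argument with $(P,L,\fN)$ and $(Q,M,\fM)$ interchanged throughout.

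The point I expect to require the most care is that one should \emph{not} hope that $\RES^G_{P,L}\IND^G_{Q,M}(\fM)$ itself vanishes: the Mackey pieces indexed by conjugates of $M$ lying inside $L$ are genuinely nonzero (they are inductions of cuspidals from proper Levis of $L$). What kills the $R\bbHom$ is the \emph{second} invocation of the second adjunction, internal to $L$, which trades ``induced from a proper parabolic of $L$'' for ``restricted to a proper parabolic of $L$'', where cuspidality of $\fN$ finally applies. The only delicate bookkeeping is that properness of $P\cap\ls\dw M$ in $\ls\dw M$, respectively of $L\cap\ls\dw Q$ in $L$, is controlled by whether $\ls\dw M\subseteq L$, with the non-conjugacy hypothesis used precisely to exclude the remaining borderline case $\ls\dw M = L$.
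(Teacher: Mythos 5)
Your proof is correct, but it takes a genuinely different route from the paper's. The paper first observes that, by the symmetry of the statement, one may assume $L$ is not conjugate to a subgroup of $M$. Under that hypothesis the entire Steinberg functor $\RES^G_{Q,M}\IND^G_{P,L}$ kills cuspidals on $\ul$: in every Mackey piece $\IND^M_{M\cap\ls\dw P, M\cap\ls\dw L}\RES^{\ls\dw L}_{Q\cap\ls\dw L, M\cap\ls\dw L}\dw_\ast$, the group $M\cap\ls\dw L$ is proper in $\ls\dw L$, so the inner restriction of the cuspidal $\dw_\ast\fN$ vanishes. One of the two $R\bbHom$ vanishings then follows from the ordinary adjunction $\IND\dashv\RES$, and the \emph{other} is obtained by Verdier duality (using that $\D$ preserves cuspidals and intertwines $\IND$), not by the second adjunction; finally one switches the roles of $L$ and $M$. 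Your argument dispenses with both the WLOG reduction and the duality step: you keep both $R\bbHom$'s, feed each through the first adjunction, and then run the Mackey dichotomy directly, invoking the second adjunction inside $L$ to handle the pieces where $\ls\dw M$ does sit inside $L$. Both routes are valid. The paper's is a bit slicker, since the case analysis disappears, but yours is arguably more robust: it does not require the compatibility $\D_{\ug}\IND^G_{P,L}\simeq\IND^G_{P,L}\D_{\ul}$, and it makes the role of the second adjunction and of the non-conjugacy hypothesis (excluding $\ls\dw M = L$) completely explicit. Your concluding paragraph correctly flags that one should not expect $\RES^G_{P,L}\IND^G_{Q,M}(\fM)$ itself to vanish; the nonzero Mackey pieces are disposed of only after a second pass through adjunction inside $L$, which is the genuinely new step relative to the paper's argument.
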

\begin{proof}
	First let assume $L$ is not conjugate to a subgroup of $M$. Let $\fN \in \bD^b_{coh}(\ul)_{cusp}$ and $\fM \in \bD^b_{coh}(\um)_{cusp}$. 
	As $M$ is not conjugate to a subgroup of $L$, for any $g\in G$, $M \cap \ls gL $
	is a proper subgroup of $\ls gL$. Thus 
	\[
	\lrsubsuper{Q,M}{}{\ST}{P,L}{w} \fN \simeq \IND _{M\cap \ls \dw P, M \cap \ls \dw L} ^{M} \RES^{\ls \dw L} _{Q\cap \ls \dw L, M \cap \ls \dw L}
	{\dw _\ast} \fN \simeq 0
	\]
	as $\dw_\ast\fN$ is cuspidal for each $\dw$. By the Proposition \ref{propositionmackey} (the Mackey formula), $\lrsubsuper{Q,M}{}{\ST}{P,L}{} \fN \simeq 0$. Thus 
	\begin{equation}\label{equationorth}
	R\bbHom (\IND_{Q,M}^G \fM, \IND^G_P \fN) = R\bbHom (\fM, \lrsubsuper{Q,M}{}{\ST}{P,L}{} \fN) \simeq 0.
	\end{equation}
	Note that the Verdier duality functor $\D_\ul$ preserves the category of cuspidal objects in $\bD(\ul)$ and the functor $\IND^G_{P,L}$ intertwines the Verdier duality functors. Applying \ref{equationorth} with $\fN$ and $\fM$ replaced by $\D_\ul \fN$ and $\D_\um \fM$, we obtain:
	\begin{align*}
	R\bbHom (\IND^G_{P,L} \fN, \IND^G_{Q,M} \fM) & = R\bbHom(\D_{\ug} \IND^G_{Q,M}\fM , \D_{\ug} \IND^G_{P,L} \fN)       \\
	& = R\bbHom(\IND^G_{Q,M} \D_{\um} \fM ,  \IND^G_{P,L} \D_{\ul}\fN) = 0, 
	\end{align*}
	Thus $\IND^G_{P,L}(\fN)$ is orthogonal to $\IND^G_{Q,M}(\fM)$ whenever $M$ is not conjugate to a subgroup of $L$. By switching the roles of $M$ and $L$, we obtain that $\IND^G_{P,L}(\fN)$ and $\IND^G_{Q,M}(\fM)$ are also orthogonal whenever $L$ is not conjugate to a subgroup of $M$. Thus $\IND^G_{P,L}(\fN)$ and $\IND^G_{Q,M}(\fM)$ are orthogonal whenever $M$ is not conjugate to $L$. (Note that $M$ is conjugate to $L$ if and only if $M$ is conjugate to a subgroup of $L$ and $L$ is conjugate to a subgroup of $M$).
\end{proof}

\begin{remark}\label{remarksummand}
	The proposition immediately implies that \emph{direct summands} of parabolic inductions from cuspidals on non-conjugate Levi subgroups are orthogonal. We will see in the following section that every object of the abelian category $\bM(\ug)_{(L)}$ is a direct summand of $\ind^G_{P,L}(\fM)$ for some cuspidal object $\fM \in \bM(\ul)$. This proves that the subcategories $\bM(\ug)_{(L)}$ are orthogonal for non-conjugate Levis, $L$. 
\end{remark}

In the proof above, we noted that the terms of the Mackey filtration of $\lrsubsuper{M,Q}{}{\ST}{P,L}{}(\fM)$ vanish unless $L$ is conjugate to a subgroup of $M$. Let us observe that if $M$ is conjugate to $L$, then the Mackey formula takes a special form:
\begin{proposition}\label{propositionmackeycuspidal}
	Suppose $\fM \in \bD(\ul)_{cusp}$. Then $\lrsubsuper{L,P}{}{\ST}{P,L}{w} \fM \simeq 0$ unless $w\in W_{G,L} = N_G(L)/L$ (which is naturally identified with a subset of $\quot{L}{\cS(L,L)}{L} \sim \quot PGP$). 
\end{proposition}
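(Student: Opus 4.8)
The plan is to read this off directly from the Mackey formula (Proposition~\ref{propositionmackey}) together with the definition of cuspidality; no new geometric input is needed. Fix $w \in \quot PGP$ and a lift $\dw \in G$. Specializing Proposition~\ref{propositionmackey} to the case $M = L$, $Q = P$ gives
\[
\lrsubsuper{L,P}{}{\ST}{P,L}{w}(\fM) \simeq \IND^{L}_{L \cap \ls\dw P,\, L \cap \ls\dw L}\; \RES^{\ls\dw L}_{P \cap \ls\dw L,\, L \cap \ls\dw L}\; \dw_\ast(\fM),
\]
where, following the conventions of Section~\ref{sectionmackey}, the parabolic subgroup of $\ls\dw L$ appearing in the restriction functor (the image of $P \cap \ls\dw P$ in $\ls\dw L$) has Levi factor $L \cap \ls\dw L$. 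Since conjugation by $\dw$ is an equivalence of categories intertwining all parabolic induction and restriction functors, $\dw_\ast(\fM)$ is a cuspidal object of $\bD(\ls\dw\ul)$.

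The first step is the observation that, by the definition of cuspidal, the functor $\RES^{\ls\dw L}_{P',L'}$ annihilates $\dw_\ast(\fM)$ whenever $P'$ is a \emph{proper} parabolic subgroup of $\ls\dw L$. Hence $\lrsubsuper{L,P}{}{\ST}{P,L}{w}(\fM) \simeq 0$ unless the relevant parabolic of $\ls\dw L$ is improper, i.e.\ equal to all of $\ls\dw L$. A parabolic subgroup coincides with the ambient group exactly when its Levi factor does, so the non-vanishing of the $w$-term forces $L \cap \ls\dw L = \ls\dw L$, that is, $\ls\dw L \subseteq L$.

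The second step converts the condition $\ls\dw L \subseteq L$ into a statement about $w$. Since $\ls\dw L$ and $L$ are conjugate in $G$ they have the same dimension, and $L$ is connected, so $\ls\dw L \subseteq L$ forces $\ls\dw L = L$, i.e.\ $\dw \in N_G(L)$. Under the identification of $W_{G,L} = N_G(L)/L$ with a subset of $\quot{L}{\cS(L,L)}{L} \simeq \quot PGP$ described in the statement, this says precisely that $w \in W_{G,L}$, which completes the proof.

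I do not anticipate any serious obstacle. The only point demanding care is the bookkeeping in the first step: correctly identifying the parabolic subgroup of $\ls\dw L$ and its Levi factor that occur in Proposition~\ref{propositionmackey} --- namely the image of $P \cap \ls\dw P$ in $\ls\dw L$, whose Levi factor is $L \cap \ls\dw L$ --- which is exactly the content of Diagram~\ref{diagramintersect}. If one prefers, the whole argument can equally be run on Lie algebras, replacing the inclusion $\ls\dw L \subseteq L$ of groups by the inclusion $\fl \cap \ls w\fl \subseteq \ls w\fl$ of Levi subalgebras.
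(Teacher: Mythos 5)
Your proof is correct and follows exactly the argument the paper has in mind: the paper does not spell out a separate proof of this proposition, but explicitly flags it as the observation already made inside the proof of Proposition~\ref{propositionsplit} (``As $M$ is not conjugate to a subgroup of $L$, for any $g\in G$, $M\cap\ls gL$ is a proper subgroup of $\ls gL$, thus\ldots $\dw_\ast\fN$ is cuspidal''), and your proof is precisely the $M=L$, $Q=P$ specialization of that argument, together with the one-line observation that $\ls\dw L\subseteq L$ forces $\ls\dw L=L$ by dimension and connectedness.
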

Thus $\lrsubsuper{L,P}{}{\ST}{P,L}{}(\fM)$ is an iterated extension of $w_\ast(\fM)$ where $w$ ranges over $W_{G,L}$ when $\fM$ is cuspidal.

	\subsection{The category of cuspidal objects}\label{sscuspidal}
	Let $\fg'= \fg/\fz(\fg) \simeq [\fg,\fg]$; thus $\fg \simeq \fz(\fg)\times \fg'$. Note that the torus $Z^\circ(G)$ acts trivially on $\fg$, and equivariance for a connected group acting trivially does not affect the abelian category of $D$-modules. Thus equivariance for $G$ is the same thing as equivariance for $G'=G/Z^\circ(G)$. In particular, given an object $\fM\in \bM(\fz(\fg)$ and $\fK\in \bM(\ug')$, we have an object $\fM \boxtimes \fK \in \bM(\ug)$.
	\begin{lemma}\label{lemmacuspidalnilpotent}
		An object $\fM\boxtimes \fK \in \bM(\ug)$ is cuspidal if and only if $\fK \in \bM(\ug')$ is cuspidal.
	\end{lemma}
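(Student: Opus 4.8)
The key point is that cuspidality is detected by parabolic restriction to all proper parabolics, and we want to relate proper parabolics of $G$ to those of $G'$ and factors involving $\fz(\fl)$. First I would reduce the statement to a compatibility between parabolic restriction on $\fg$ and the external product decomposition $\fg \simeq \fz(\fg) \times \fg'$. Since $Z^\circ(G)$ acts trivially, a parabolic subgroup $P$ of $G$ corresponds to a parabolic subgroup $P'$ of $G'$ with Levi quotient $L'$, and $L = P/U$ fits into $\fl \simeq \fz(\fg) \times \fl'$; the unipotent radical $\fu$ of $\fp$ is the same as the unipotent radical $\fu'$ of $\fp'$ (it sits inside $\fg'$). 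The claim I would establish is:
\[
\RES^G_{P,L}(\fM \boxtimes \fK) \simeq \fM \boxtimes \RES^{G'}_{P',L'}(\fK),
\]
as objects of $\bM(\ul) \simeq \bM(\fz(\fg)) \boxtimes \bM(\ul')$. This follows directly from the definition $\RES^G_{P,L} = s_\ast r^!$ together with the fact that the diagram of stacks $\ug \leftarrow \up \rightarrow \ul$ splits as a product of the trivial diagram $\fz(\fg)/1 \xleftarrow{=} \fz(\fg)/1 \xrightarrow{=} \fz(\fg)/1$ with the corresponding diagram $\ug' \leftarrow \up' \rightarrow \ul'$ for $G'$; the pushforward and shriek-pullback functors are computed factorwise by the Künneth formula for $D$-modules on a product (which I would cite from the appendix on $D$-modules, or prove by the standard base-change argument for external products).

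Given this compatibility, the lemma is immediate. The proper parabolics $P$ of $G$ are in bijection with the proper parabolics $P'$ of $G' = G/Z^\circ(G)$ (conjugation by $Z^\circ(G)$ is trivial, and $Z^\circ(G) \subseteq P$ for every parabolic $P$). So $\fM \boxtimes \fK$ is cuspidal, i.e. $\RES^G_{P,L}(\fM \boxtimes \fK) \simeq 0$ for all proper $P$, if and only if $\fM \boxtimes \RES^{G'}_{P',L'}(\fK) \simeq 0$ for all proper $P'$. Now I would invoke the fact that external product with a \emph{fixed nonzero} object is conservative on nonzero objects: $\fM \boxtimes \fN \simeq 0$ with $\fM \neq 0$ forces $\fN \simeq 0$ (for instance because $SS(\fM \boxtimes \fN) = SS(\fM) \times SS(\fN)$, or directly because $\bM(X \times Y)$ contains $\bM(X) \boxtimes \bM(Y)$ faithfully). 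Hence, assuming $\fM \neq 0$ — which is harmless, since if $\fM = 0$ then $\fM \boxtimes \fK = 0$ is trivially cuspidal and the statement is vacuous, and one should read the lemma with this convention — we get $\RES^{G'}_{P',L'}(\fK) \simeq 0$ for all proper $P'$, which is exactly cuspidality of $\fK$ on $\ug'$. The converse direction is the same argument run backwards.

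The main obstacle is purely the bookkeeping of the first step: verifying that parabolic restriction genuinely factors through the product decomposition, i.e. that the identification $\fl \simeq \fz(\fg) \times \fl'$ is compatible with the correspondence $\up$ and that the shriek-pullback/pushforward along the relevant morphisms satisfies Künneth. This is routine but requires being careful that the ``very safe'' non-representable morphism $s : \up \to \ul$ behaves well under products of stacks and that no spurious cohomological shift is introduced — the shift in the definition of $\RES$ comes from $\dim \fu = \dim \fu'$, which is the same whether computed in $G$ or $G'$, so it splits off trivially. Once this is in place there is nothing further to do.
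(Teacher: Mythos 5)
Your proposal is correct and takes essentially the same approach as the paper: both proofs establish that the correspondence defining parabolic restriction splits as a product of the trivial correspondence on $\fz(\fg)$ with the corresponding correspondence for $G' = G/Z^\circ(G)$, yielding $\res^{G}_{P,L} \simeq \id \boxtimes \res^{G'}_{P',L'}$. The paper stops at this factorization and declares the lemma immediate; you spell out the remaining steps (the bijection between proper parabolics of $G$ and $G'$, the conservativity of $\fM\boxtimes(-)$ for $\fM\neq 0$, and the vacuous case $\fM = 0$), which is a reasonable unpacking of what "immediate" means here.
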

	\begin{proof}
		Suppose $P$ is a parabolic subgroup of $G$ with Levi factor $L$. Set $P'=P/Z^\circ(G)$ and $L'=L/Z^\circ(G)$ (and similarly for the Lie algebras). We have the diagram:
		\[
		\xymatrix{
			\ug & \ar[l] \up \ar[r] & \ul \\
			\ar@{<->}[u]^\wr \fz(\fg) \times \ug' & \ar@{<->}[u]^\wr \ar[l] \fz(\fg) \times \up' \ar[r] & \ar@{<->}[u]^\wr \fz(\fg)\times \ul'
		}
		\]
		Thus, the functor of parabolic restriction factors as:
		\[
		\res^{G}_{P,L} \simeq \res^{G'}_{P',L'} \boxtimes id_{\bM(\fz(\fl))}.
		\]
		The lemma follows immediately from this observation.
	\end{proof}

\begin{definition}
The category of \emph{orbital sheaves} is defined to be $\bM(\ucN_G)$ (which we identify with a full subcategory of $\bM(\ug)$). The category of \emph{character sheaves} is defined to be the essential image of $\bM(\ucN_G)$ under Fourier transform. 
\end{definition}

Let $\fE= IC(\cO,\cE)$ be a simple orbital sheaf, where $\cE$ is a simple equivariant local system on a nilpotent orbit $\cO$. Such a local system $\cE$ is called cuspidal if $\fE$ is a cuspidal orbital sheaf in $\bM(\ug)$.\footnote{Note that the Fourier transform of $\fE$ is a cuspidal character sheaf, necessarily of the form $\cO_{\fz(\fl)}\boxtimes \fE'$ (with respect to the decomposition $\ug = \fz(\fg) \times \ug'$), where $\fE'$ is another simple cuspidal $D$-module on the nilpotent cone, and $\cO_{\fz(\fl)}$ is the trivial flat connection.} Given a cuspidal local system $\cE$, we define the full subcategory $\bM(\ug)_{(\cE)}$ of $\bM(\ug)$ to consist of objects supported on $\ucN_G\times \fz(\fg)$ of the form $\fE \boxtimes \fM$, where $\fM$ is any object in $\bM(\fz(\fg))$. 

\begin{proposition}\label{propositioncuspidal}
	The category of cuspidal objects decomposes as an orthogonal sum
	\[
	\bM(\ug)_{cusp} \simeq \bigoplus^\perp \bM(\ug)_{(\cE)},
	\]
	indexed by cuspidal data $(\cO,\cE)$. Moreover,
	$
	\bM(\ug)_{(\cE)} \simeq \bM(\fz(\fg))
	$
	via $\fM \boxtimes \fE \mapsto \fM$.
\end{proposition}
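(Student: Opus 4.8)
The plan is to deduce everything from the semisimplicity of $\bM(\ucN_G)$ together with the two lemmas immediately above. First I would record the structural observation underlying the displayed claims in the paragraph preceding the proposition: by Corollary \ref{corollarycuspidal} any cuspidal $\fM\in\bM(\ug)$ is supported on $\ug_\heartsuit=\fz(\fg)\times\ucN_G$, and, using the splitting $\fg\simeq\fz(\fg)\times\fg'$ with $Z^\circ(G)$ acting trivially on the first factor, a $G$-equivariant $D$-module supported there is the same data as a $\fD_{\fz(\fg)}$-module object internal to $\bM(\ucN_G)$ (concretely $\fD_{\fz(\fg)\times\fg'}=\fD_{\fz(\fg)}\otimes\fD_{\fg'}$, and the $G$-action only touches the second factor). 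I expect this ``Künneth with one factor trivially acted on'' identification to be the only point needing genuine justification; since $\fz(\fg)$ is an affine space it is routine.

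Next I would feed in semisimplicity of $\bM(\ucN_G)$ (equivalently, of $G$-equivariant perverse sheaves on $\cN_G$), whose simple objects are the $\IC(\overline\cO,\cE)$ and for which $\bbHom$ between non-isomorphic simples vanishes. Any $\fD_{\fz(\fg)}$-module object internal to $\bM(\ucN_G)$ then splits canonically as $\bigoplus_\cE \fM_\cE\boxtimes\IC(\overline\cO,\cE)$ with $\fM_\cE\in\bM(\fz(\fg))$ --- the $\fD_{\fz(\fg)}$-action cannot mix distinct isotypic components --- and distinct summands admit no morphisms between them, so this is an \emph{orthogonal} direct sum of abelian categories. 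Combining with the preceding lemma, that $\fM\boxtimes\fK$ is cuspidal iff $\fK$ is, the cuspidal objects are exactly those for which every $\fM_\cE$ with $\cE$ non-cuspidal vanishes. This already gives $\bM(\ug)_{cusp}\simeq\bigoplus^\perp_\cE\bM(\ug)_{(\cE)}$, the sum over simple cuspidal $\cE$.

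Finally, for $\bM(\ug)_{(\cE)}\simeq\bM(\fz(\fg))$ I would check that $\fM\mapsto\fM\boxtimes\fE$ is an equivalence onto $\bM(\ug)_{(\cE)}$: essential surjectivity holds by definition of $\bM(\ug)_{(\cE)}$, and full faithfulness follows because $\fE$ is simple with $\End(\fE)=\C$, whence $\bbHom(\fM\boxtimes\fE,\fM'\boxtimes\fE)\simeq\bbHom(\fM,\fM')$ (either from the same semisimple Künneth picture, or directly via the projection $\fz(\fg)\times\fg'\to\fz(\fg)$, using $\bbHom_{\bM(\ucN_G)}(\fE,\fE)=\C$); the inverse is $\fM\boxtimes\fE\mapsto\fM$. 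The real obstacle throughout is the first paragraph --- pinning down that $D$-modules supported on $\fz(\fg)\times\ucN_G$ form, as an abelian category, the direct sum $\bigoplus_\cE\bM(\fz(\fg))$ --- after which the argument is bookkeeping; one can also sidestep the internal-category language by invoking the decomposition theorem on $\cN_G$ and pushing forward along the projection to $\fz(\fg)$.
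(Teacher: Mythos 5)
Your proof is correct and follows essentially the same route as the paper: use Corollary \ref{corollarycuspidal} to place cuspidal objects on $\fz(\fg)\times\ucN_G$, split $\fD_{\fg}=\fD_{\fz(\fg)}\otimes\fD_{\fg'}$ with $Z^\circ(G)$ acting trivially, invoke semisimplicity of $\bM(\ucN_G)$ to decompose into $\fM\boxtimes\fE$ pieces, and filter out the non-cuspidal $\fE$ via the preceding lemma. The paper compresses all of this into the one-sentence paragraph before the proposition; you simply spell out the Künneth identification and the orthogonality/full-faithfulness checks that the paper leaves implicit.
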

\begin{proof}
Recall that every cuspidal object of $\bM(\ug)$ is supported on $\ug_\heartsuit = \fz(\fg)\times \ucN_G$ (see Lemma \ref{lemmasingsuppreszero}). Also recall that the category $\bM(\ucN_G)$ is semisimple.\footnote{This follows from the parity vanishing condition in \cite{lusztig_character_1986} Theorem 24.8.
} 
Thus by Lemma \ref{lemmacuspidalnilpotent} any object of $\bM(\ug)_{cusp}$ can be written as a direct sum of objects of the form $\fE \boxtimes \fM$, where $\fE$ is a simple cuspidal object of $\bM(\ucN_G)$ and $\fM$ is any object of $\bM(\fz(\fg))$. As the endomorphisms of $\fE$ are one dimensional, we see that the functor $\fM \mapsto \fE \boxtimes \fM$ is fully faithful, as claimed.
\end{proof}
	
\subsection{The relative Steinberg and Weyl monads}\label{subsectionsteinbergweyl}
	Let us fix a parabolic subgroup $P$ with Levi factor $L$. As explained in Subsection \ref{ssrecollement}, we have a monadic adjunction:
	\[
		\xymatrixcolsep{5pc}
	\xymatrix{
		\ind^G_{P,L}: \bM(\ul)_{cusp} \ar@/_0.3pc/[r] &  \ar@/_0.3pc/[l] \bM(\ug)_{(L)}:  
		\res_{P,L}^G
	} 
	\]
	where $\bM(\ug)_{(L)}$ can be identified with the full subcategory of $\bM(\ug)$ which is generated under colimits by the essential image of $\ind^G_{P,L}$ restricted to cuspidal objects.
	
The corresponding monad is given by
\[
	\lrsubsuper{L,P}{}{\st}{P,L}{} = \res^G_{P,L} \ind^G_{P,L}:\bM(\ul)_{cusp} \to \bM(\ul)_{cusp}
	\]
	By the Mackey Formula (Proposition \ref{propositionmackey}), the functor $\lrsubsuper{L,P}{}{\st}{P,L}{}$ carries a filtration indexed by $\quot PGP$ for which the associated graded piece corresponding to $w\in \quot PGP$ is $\ind _{M \cap \ls \dw L} ^{M} \res^{\ls \dw L} _{M \cap \ls \dw L} {\dw _\ast}$. By Proposition \ref{propositionmackeycuspidal}, the restriction of the monad $\lrsubsuper{L,P}{}{\st}{P,L}{}$ to the subcategory of cuspidal objects has a filtration indexed by $W_{G,L}$ (with its poset structure defined by identification with a subset of $\quot PGP$) for which the associated graded functor is $\left(W_{G,L}\right)_\ast$. 
	
	The following key result generalizes Theorem \ref{theoremheartspringer} in the Springer block case. 
	\begin{theorem}\label{theoremheart}
		There is an equivalence of monads $\lrsubsuper{L,P}{}{\st}{P,L}{} \simeq \left(W_{G,L}\right)_\ast$ acting on $\bM(\ul)_{cusp}$.
	\end{theorem}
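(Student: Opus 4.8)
The plan is to reduce the statement to its behaviour over the regular locus $\fl^\reg_\heartsuit$ and then show that this restriction loses no information. Concretely, I would first use Proposition \ref{propositioncuspidal} to reduce to a single block $\bM(\ul)_{(\cE)} \simeq \bM(\fz(\fl))$ (with $L=G$ in that identification replaced by the ambient group, and the cuspidal support fixed), so that the monad $\lrsubsuper{L,P}{}{\st}{P,L}{}$ becomes a monad on $\bM(\fz(\fl))$. Both the Steinberg monad and the Weyl monad $(W_{G,L})_\ast$ are then endofunctors of $\bM(\fz(\fl))$ which, by Proposition \ref{propositionmackeycuspidal} and the Mackey filtration, carry compatible filtrations indexed by $W_{G,L}$ with associated graded $\bigoplus_{w\in W_{G,L}} w_\ast$. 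The goal is to produce an isomorphism of monads respecting these filtrations.

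Next I would construct the comparison map. Over the regular locus, Proposition \ref{propositionindresreg} says the Mackey filtration of $\lrsubsuper{L,P}{}{\st}{P,L}{}$ splits as a direct sum $\bigoplus_{w} w_\ast$ after applying $j^!$ for $j:\um^\reg \hookrightarrow \um$; moreover by Proposition \ref{propositiondiagram} the morphism $d^\reg_\heartsuit:\ul^\reg_\heartsuit \to \ug_{(L)}$ is a $W_{G,L}$-Galois cover, so parabolic induction followed by restriction on the regular cuspidal locus is literally computed by pull-push along a Galois cover, which is well known to give the group algebra monad $(W_{G,L})_\ast$ — this is the classical Grothendieck–Springer computation, and it furnishes a canonical isomorphism $j^!\lrsubsuper{L,P}{}{\st}{P,L}{} \simeq (W_{G,L})_\ast j^!$ compatible with the monad structures (the multiplication on both sides is the convolution on the Galois groupoid $\fl^\reg_\heartsuit \times_{\ug_{(L)}} \fl^\reg_\heartsuit \simeq W_{G,L} \times \fl^\reg_\heartsuit$). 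I would then need to check that this regular-locus isomorphism of monads extends uniquely to all of $\bM(\fz(\fl))$. Here is where Lemma \ref{lemmairregsingsupp} does the work: the ``difference'' between the extended Steinberg monad and the Weyl monad, if nonzero, would be an endofunctor landing in objects whose singular support is supported over $\comm(\fl)^\nonreg$ (since it vanishes after $j^!$), and cuspidality forces such objects to vanish. More precisely, both monads are exact functors (Theorem \ref{maintheoremindres}, Corollary \ref{corollarysecond}) preserving the block $\bM(\fz(\fl))$, and an exact functor together with a natural transformation that becomes an isomorphism after $j^!$ must already be an isomorphism, because the cone/kernel/cokernel would be a cuspidal $D$-module with singular support in the non-regular locus, hence zero by Lemma \ref{lemmairregsingsupp}.

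The one genuinely delicate point — and the step I expect to be the main obstacle — is promoting the regular-locus comparison from an isomorphism of \emph{functors} to an isomorphism of \emph{monads}, i.e. checking compatibility with the two multiplication natural transformations $\mu: \st\circ\st \Rightarrow \st$ and the analogous one for $W_{G,L}$. The subtlety is that $\st\circ\st$ involves an iterated fibre product of Steinberg stacks, and the associated-graded-to-associated-graded map of the resulting double filtration must be matched with the group multiplication $W_{G,L}\times W_{G,L} \to W_{G,L}$; naively this is a statement about composing correspondences, but one has to know the filtration on $\st\circ\st$ is ``multiplicative'' and that the extension problem is rigid. I would handle this by working on the regular locus, where everything is a genuine Galois cover and the monad $(W_{G,L})_\ast$ has no nontrivial automorphisms as a monad deforming the identity on associated graded pieces (the relevant $\Ext^{-1}$ or $\mathrm{Hom}$ groups between distinct $w_\ast$ vanish because the $\fz(\fl)$-action has distinct ``weights'', or more precisely because $w_\ast \neq w'_\ast$ have disjoint supports after a suitable twist), so the monad structure is determined by the functor structure plus the associated graded. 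Then the extension to $\bM(\fz(\fl))$ of the monad multiplication, once the functor isomorphism is extended, is again forced by the no-coisotropics lemma applied to the appropriate cones. I would also invoke the general categorical machinery of Appendix \ref{appendixcat} (recollement and Barr–Beck) to guarantee that an isomorphism of monads is exactly what is needed to then identify $\bM(\ug)_{(L)}$ with $(W_{G,L})_\ast\text{-mod} \simeq \bM(\fz(\fl))^{W_{G,L}}$, completing the deduction of Theorem \ref{theoremabelian}.
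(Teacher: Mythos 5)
Your high-level plan matches the paper's: reduce to a single cuspidal block, identify the monad over the regular locus with the group monad via the $W_{G,L}$-Galois cover, and then extend that identification over the non-regular locus. The paper also does exactly what you flag as delicate---checking compatibility with the monad multiplication separately from the underlying functor isomorphism---and both proofs run the extension argument twice, once for the map and once for its multiplicativity. So you have correctly located all the moving parts.

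However, there is a genuine gap in your extension step, and a difference in method worth noting. You argue that \emph{if} there is a natural transformation $\phi:\lrsubsuper{L,P}{}{\st}{P,L}{}\Rightarrow(W_{G,L})_\ast$ extending the regular-locus isomorphism, then its cone is forced to vanish by Lemma \ref{lemmairregsingsupp}. But you never produce $\phi$. A regular-locus isomorphism $j^!\st\simeq j^!W_\ast$ does not automatically induce a global map: the canonical morphisms $\st\to j_\ast j^!\st$ and $W_\ast\to j_\ast j^!W_\ast$ go the wrong way for you to splice them together, since $W_\ast\to j_\ast j^! W_\ast$ is not invertible. The paper resolves this by working with integral kernels in $\bM(\fz\times\fz)$ and running three small computations: Lemma \ref{lemmaextvanish2} shows $\Ext^1_{\bM(\fz\times\fz)}(\cO_v,\cO_w)=0$ for $v\neq w$ (so that the Mackey filtration of $\fK_{\st}$ abstractly splits and \emph{some} isomorphism $\fK_{\st}\simeq\bigoplus_w\cO_w$ exists); Lemma \ref{lemmaregmonad} gives the regular-locus isomorphism of algebra objects (your Galois-cover step); and Lemma \ref{lemmaextend} identifies $\bbHom_{\bM(\fz\times\fz)}(\cO_v,\cO_w)$ with its regular-locus analogue, which simultaneously pins down the splitting and forces the multiplications $\tau$, $\tau'$ to agree because they already agree on the regular locus. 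Your Lemma \ref{lemmairregsingsupp} route could plausibly replace Lemma \ref{lemmaextend} in the uniqueness direction, but cannot replace Lemma \ref{lemmaextvanish2}, which is the existence input you are missing. Also, your heuristic that distinct $w_\ast$, $w'_\ast$ have ``disjoint supports after a suitable twist'' is not correct as stated---the graphs $\Gamma_v,\Gamma_w\subseteq\fz\times\fz$ meet along $\fz^{v,w}$---and the actual content of Lemma \ref{lemmaextvanish2} is a codimension computation $\Ext^i(\cO_v,\cO_w)\simeq H^{i-2d}(\fz^{v,w};\C)$ with $d=\operatorname{codim}\fz^{v,w}>0$, not a support-disjointness argument. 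Finally, the kernel formulation is what lets the paper treat the monad multiplication as a literal morphism in $\bM(\fz\times\fz)$ to which the $\Hom$-extension lemma applies; your proposal handles this rigidity more informally, and adopting the integral-kernel language would make that part of the argument precise.
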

As in the case of the Springer block, the basic idea behind the proof of Theorem \ref{theoremheart} is to first observe that the statement holds when the functors are restricted to the regular locus, and then extend over the non-regular locus. This can be proved in a similar way to the Springer case, but it will be slightly more convenient to consider one cuspidal block at a time so we can apply the Lemmas in Subsection \ref{sec:proof-of-theorem-reftheoremheart} directly. For this to make sense, we must have that the Steinberg monad preserves the decomposition
\[
\bM(\ul)_{cusp} = \bigoplus_{(\cE)} \bM(\ul)_{(\cE)}
\]
This is equivalent to the statement that $W_{G,L}$ acts trivially on the set of cuspidal data $(\cO,\cE)$. This fact is a key feature of the generalized Springer correspondence, which we state below.

\begin{theorem}[\cite{lusztig_intersection_1984}, Theorem 9.2]\label{theoremequivariant}
\
	\begin{enumerate}
		\item Every cuspidal datum $(L,\cE)$ on $L$ is fixed under the action of $W_{G,L}$. 
		\item There is a summand of $\ind^G_{P,L}(\fE)$ which appears with multiplicity one.
	\end{enumerate}
\end{theorem}	
\begin{remark}
The results of \cite{lusztig_intersection_1984} are phrased in terms of the endomorphism algebra $A_{\fE}$ of $\ind^G_{P,L}(\fE)$. In particular, Lusztig shows in Theorem 9.2 that $A_{\fE}$ is isomorphic to the group algebra of $W_{G,L}$. We can view this statement as a combination of three separate statements:
\begin{enumerate}
\item The algebra $A_{\fE}$ is a twisted group algebra of $W_{G,L,\cE}$ (the stabilizer of the cuspidal datum $(\cO,\cE)$ in $W_{G,L}$).
\item The cuspidal datum $(\cO,\cE)$ is fixed by $W_{G,L}$ (so $W_{G,L,\cE} = W_{G,L}$).
\item The endomorphism algebra $A_{\fE}$ has a module of dimension one (it follows that $A_{\fE}$ is equivalent to the non-twisted group algebra) .
\end{enumerate}
The first statement above is a consequence of Theorem \ref{theoremheart} and can be proved directly using the techniques in this paper. The two other statements (which are equivalent to those in Theorem \ref{theoremequivariant}) require more delicate calculations, encoded in the dimension estimates appearing in the proof of \cite{lusztig_intersection_1984}, Theorem 9.2; we will not attempt to give a self-contained treatment of these results in the present paper.
\end{remark} 
 	\begin{remark}
		In fact, Theorem 9.2 in \cite{lusztig_intersection_1984} gives more.	
Let $P_1, \ldots , P_r$ be a complete list of minimal parabolic subgroups of $G$ properly containing $P$, and $L_1,\ldots, L_r$ corresponding Levi factors containing $L$. Then $W_{L_i,L}$ is a subgroup of $W_{G,L}$ of order at most 2 (it acts faithfully on the 1-dimensional vector space $\fz(\fl)/\fz(\fl_i)$ by reflections). Let $s_i$ denote the generator of $W_{L_i,L}$. The $s_i$ generate $W_{G,L}$, which acts by reflections on $\fz(\fl)/\fz(\fg)$.
In particular, if $L$ is a Levi subgroup of $G$ which admits a cuspidal local system on a nilpotent $L$-orbit, then all the subgroups $W_{L,i,L}$ are non-trivial (and thus of order 2). In this case $W_{G,L}$ is the Weyl group of a certain root system on $\fz(\fl)/\fz(\fg)$ (see \cite{lusztig_coxeter_1976} 5.9). This is a very strong restriction on which Levi subgroups of $G$ admit cuspidals. For example, the Levi subgroup $L_p$ of $SL_n$ consisting of block diagonal matrices corresponding to a partition $n = p_1 + \ldots + p_k$ can only admit a cuspidal when $p_1=\ldots =p_k$ for some divisor $p_k$ of $n$, in which case $W_{G,L_p} = S_k$ acting by permuting the blocks of a matrix (and there are in fact cuspidals in that case see \cite{lusztig_intersection_1984} 10.3).
	\end{remark}
	\begin{remark}
		It is a result of Bala-Carter theory that if $\cO$ is a distinguished nilpotent $L$-orbit, then the $G$-saturation of $\cO$ intersects $\fl$ precisely at $\cO$ (see \cite{sommers_generalization_1998}). In particular $\cO$ is preserved by the action of $W_{G,L}$. The first part of Theorem \ref{theoremequivariant} can be thought of as a generalization of this fact, where the set of distinguished orbits is refined by the set of cuspidal data.
	\end{remark}
\begin{proof}[Proof of Theorem \ref{theoremheart}]
By Theorem \ref{theoremequivariant} and the Mackey formula, parabolic induction and restriction restrict to a monadic adjunction for each cuspidal datum $(L,\cE)$:
\[
	\xymatrixcolsep{5pc}
	\xymatrix{
		\ind^G_{P,L}: \bM(\ul)_{(\cE)} \ar@/_0.3pc/[r] &  \ar@/_0.3pc/[l] \bM(\ug)_{(L,\cE)}:  
		\res_{P,L}^G
	} 
	\]
Let $\st_{(\cE)}$ denote the restriction of the Steinberg monad to the block $\bM(\ul)_{(\cE)}$, which is equivalent to $\bM(\fz(\fl))$. Thus we can identify endofunctors of the block with $\bM(\fz(\fl)\times \fz(\fl))$. Just as for Theorem \ref{theoremheartspringer}, the result then follows from the three lemmas in Subsection \ref{sec:proof-of-theorem-reftheoremheart}, after replacing $\fh$ by $\fz(\fl)$, and $W$ by $W_{G,L}$.
\end{proof}	

\begin{corollary}\label{corollaryequiv}
Every simple cuspidal object $\fE$ in $\bM(\ucN_L)$ carries a $W_{G,L}$-equivariant structure. In particular there is a $W_{G,L}$-equivariant equivalence:
$
\bM(\ul)_{(\cE)} \simeq \bM(\fz(\fl))
$
\end{corollary}
\begin{proof}
According to Theorem \ref{theoremequivariant}, $\fE$ is the parabolic restriction of an object in $\bM(\ucN_G)$ (namely the simple summand of $\ind^G_{P,L}(\fE)$ appearing with multiplicity one). This it carries the structure of a $\lrsub{L,P}{\st}{P,L}$-module. This is the same as a $W_{G,L}$-equivariant structure by Theorem \ref{theoremheart}.
\end{proof}

	\subsection{Proof of the main results}\label{ssmain}
	
	In this subsection we will deduce Theorem \ref{theoremabelian} and Theorem \ref{maintheoremindres} from Theorem \ref{theoremheart}. First we note:
	\begin{corollary}
		For each cuspidal datum $(L,\cE)$, there is an equivalence of categories:
		\[
		\bM(\ug)_{(L,\cE)} \simeq \bM(\fz(\fl))^{W_{G,L}}.
		\]
	\end{corollary}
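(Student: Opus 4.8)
The plan is to read off this equivalence from Theorem \ref{theoremheart} together with the monadic description of the block $\bM(\ug)_{(L,\cE)}$ set up in Subsection \ref{subsectionsteinbergweyl}. Recall that, after choosing a $W_{G,L}$-equivariant structure on $\cE$ (which exists by Theorem \ref{theoremequivariant}), one obtains a $W_{G,L}$-equivariant equivalence $\bM(\ul)_{(\cE)} \simeq \bM(\fz(\fl))$, and that parabolic induction and restriction restrict to an adjunction
\[
\ind^G_{P,L}: \bM(\ul)_{(\cE)} \;\rightleftarrows\; \bM(\ug)_{(L,\cE)}: \res^G_{P,L}.
\]
First I would check that this adjunction is monadic, using the abelian Barr--Beck statement of Appendix \ref{appendixcat}: the functor $\res^G_{P,L}$ is exact (Proposition \ref{propresexact}), it preserves all colimits since it has a further right adjoint $\ind^G_{P^-,L}$ (Theorem \ref{theoremsecond}), and it is conservative on $\bM(\ug)_{(L,\cE)}$ because that block is generated under colimits by the essential image of $\ind^G_{P,L}$ and meets $\ker(\res^G_{P,L})$ trivially, by the recollement of Subsection \ref{ssrecollement}. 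Hence $\bM(\ug)_{(L,\cE)} \simeq \st_{(\cE)}-\module$, the category of modules for the Steinberg monad $\st_{(\cE)} = \res^G_{P,L}\ind^G_{P,L}$ acting on $\bM(\fz(\fl))$.

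Next I would invoke Theorem \ref{theoremheart}, which identifies $\st_{(\cE)}$ with $\left(W_{G,L}\right)_\ast$ \emph{as monads}. Since an isomorphism of monads induces an equivalence of the corresponding Eilenberg--Moore categories, this yields $\bM(\ug)_{(L,\cE)} \simeq \left(W_{G,L}\right)_\ast-\module$. Finally I would identify modules over the group monad $\fM \mapsto \bigoplus_{w\in W_{G,L}} w_\ast\fM$ on $\bM(\fz(\fl))$ with $W_{G,L}$-equivariant $D$-modules: a module structure is precisely a coherent family of isomorphisms $w_\ast\fM \xrightarrow{\sim} \fM$ for $w \in W_{G,L}$, i.e.\ a $W_{G,L}$-equivariant structure, so the module category is $\bM(\fz(\fl))^{W_{G,L}} = \fD_{\fz(\fl)}\rtimes W_{G,L}-\module$ (the general fact recalled in Appendix \ref{subsectionfinite}). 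Composing the three equivalences proves the corollary.

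The genuinely substantive ingredient is Theorem \ref{theoremheart}; everything else is formal once monadicity is in place. Accordingly, the main thing to be careful about is the verification of the Barr--Beck hypotheses for $\res^G_{P,L}$ restricted to $\bM(\ug)_{(L,\cE)}$ — in particular conservativity, which is exactly where the orthogonality and recollement results of the preceding subsections enter — together with bookkeeping the chosen $W_{G,L}$-equivariant structure on $\cE$ used to trivialize $\bM(\ul)_{(\cE)}$, so that the two monad structures being compared in Theorem \ref{theoremheart} are the intended ones.
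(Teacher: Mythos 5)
Your proposal is correct and follows essentially the same route as the paper, which simply asserts monadicity ``by construction'' and then invokes Theorem \ref{theoremheart} to identify the monad with $\left(W_{G,L}\right)_\ast$. You have merely unpacked what ``by construction'' means: the Barr--Beck hypotheses for $\res^G_{P,L}$ restricted to the block — exactness (Proposition \ref{propresexact}), preservation of colimits via the further right adjoint $\ind^G_{P^-,L}$ from the second adjunction, and conservativity via $\fj^\ast\fj_!\simeq\mathrm{Id}$ in the recollement — which is a reasonable level of detail given that the paper relegates this to Appendix \ref{appendixcat} and the setup of Subsection \ref{ssrecollement}.
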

	\begin{proof}
		This is an application of the Barr-Beck theorem (see Appendix \ref{appendixbb}). By construction, the functor 
		\[
		\res^G_{P,L}: \bM(\ug)_{(L,\cE)} \to \bM(\ul)_{(\cE)}
		\]
		is monadic, and by Theorem \ref{theoremheart}, the corresponding monad is given by $\left(W_{G,L}\right)_\ast$ acting on $\bM(\ul)_{(\cE)}$ (which is $W_{G,L}$-equivariantly identified with $\bM(\fz(\fl))$ by Corollary \ref{corollaryequiv}).
	\end{proof}

By Proposition \ref{propositiongroup} and Remark \ref{remarkgroup}, we obtain:
\begin{lemma}\label{lemmasummand}
	For every object $\fM \in \bM(\ug)_{(L,\cE)}$, we have
	\[
	\fM \simeq \left(\ind^G_{P,L}\res^G_{P,L} (\fM)\right)^{W_{G,L}}.
	\]
	In particular, $\fM$ is a direct summand of $\ind^G_{L}\res^G_{L} (\fM)$.
\end{lemma}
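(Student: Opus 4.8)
The plan is to deduce the lemma formally from the preceding corollary together with the general facts about modules over the monad attached to a finite group action recorded in Appendix~\ref{subsectionfinite} (Proposition~\ref{propositiongroup} and Remark~\ref{remarkgroup}).

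First, recall from the corollary above that parabolic restriction
\[
\res^G_{P,L}: \bM(\ug)_{(L,\cE)} \longrightarrow \bM(\ul)_{(\cE)}
\]
is monadic, and that by Theorem~\ref{theoremheart} the associated monad is the group monad $\left(W_{G,L}\right)_\ast$ acting on $\bM(\ul)_{(\cE)}$, which a choice of $W_{G,L}$-equivariant structure on $\cE$ identifies $W_{G,L}$-equivariantly with $\bM(\fz(\fl))$. Via the Barr--Beck equivalence (Appendix~\ref{appendixbb}), this presents $\bM(\ug)_{(L,\cE)}$ as the category of $W_{G,L}$-equivariant objects of $\bM(\fz(\fl))$ in such a way that $\res^G_{P,L}$ becomes the forgetful functor and $\ind^G_{P,L}$ becomes the free (induction) functor $\fN \mapsto \bigoplus_{w\in W_{G,L}} w_\ast\fN$.

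Next I would invoke the abstract input: for any object $\fM$ of a $\C$-linear, idempotent-complete category carrying an action of the finite group $W = W_{G,L}$, the counit $\bigoplus_{w\in W} w_\ast \fM \to \fM$ of the free--forgetful adjunction is split by the averaging idempotent $\tfrac{1}{|W|}\sum_{w\in W} w$ (available because $|W|$ is invertible in $\C$), and the image of this idempotent is canonically $\fM$ itself. This is exactly Proposition~\ref{propositiongroup}. Transporting it back through the equivalence of the corollary yields $\fM \simeq \bigl(\ind^G_{P,L}\res^G_{P,L}(\fM)\bigr)^{W_{G,L}}$, with the superscript denoting the image of the averaging projector; since this exhibits $\fM$ as the image of an idempotent endomorphism of $\ind^G_{P,L}\res^G_{P,L}(\fM)$ and $\bM(\ug)$ is idempotent-complete, $\fM$ is in particular a direct summand, proving the lemma. (Independence of $P$, which justifies writing $\ind^G_L$, $\res^G_L$, is part of Theorem~\ref{maintheoremindres}.)

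The genuinely substantive input here is Theorem~\ref{theoremheart}; granting that, the only thing requiring attention is the bookkeeping of the equivalence --- namely that under Barr--Beck the composite $\ind^G_{P,L}\res^G_{P,L}$ really is the free--forgetful composite $\bigoplus_w w_\ast$ \emph{with its monad structure}, and that the $W_{G,L}$-action entering the averaging operator is the one supplied by Theorem~\ref{theoremheart} rather than a twist of it. Both are guaranteed by the fact that the isomorphism $\lrsubsuper{L,P}{}{\st}{P,L}{} \simeq (W_{G,L})_\ast$ of Theorem~\ref{theoremheart} is an isomorphism \emph{of monads}, so no real obstacle arises here.
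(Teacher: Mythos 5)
Your proof is correct and is essentially the paper's own argument: the paper deduces the lemma in one line from Proposition~\ref{propositiongroup} and Remark~\ref{remarkgroup}, applied to the monadic adjunction furnished by the preceding corollary and Theorem~\ref{theoremheart}. Your account merely spells out what those two statements say (the $W$-action on $\ind^G_{P,L}\res^G_{P,L}(\fM)$, the averaging idempotent in characteristic zero, and the identification of its image with $\fj_!(\res(\fM)) \simeq \fM$ via Barr--Beck), so there is no substantive difference.
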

Thus the category $\bM(\ug)_{(L,\cE)}$ is just the Karoubian completion of the essential image of $\ind^G_{L}$ on $\bM(\ul)_{(\cE)}$ (i.e. the subcategory consisting of direct summands of parabolic induction from $\bM(\ul)_{(\cE)}$).

\begin{remark}
	The corresponding statement is not true for the derived blocks $\bD(\ug)_{(L,\cE)}$: not every such object is a direct summand of a parabolic induction. See \cite{gunningham_derived_2017} for further details. 
\end{remark}

	Now we show that parabolic induction of cuspidal objects is independent of the choice of parabolic (later we will drop the cuspidal condition). 
	\begin{corollary}\label{corollaryindependent}
	Suppose $P$ and $P'$ are two parabolic subgroups which contain $L$ as a Levi factor. 
		Then there is a canonical natural isomorphism $\ind^G_{P,L} \simeq \ind^G_{P',L}$ after restricting to $\bM(\ul)_{(\cE)}$.
	\end{corollary}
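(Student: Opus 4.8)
The plan is to deduce everything from the monadic description of the block established just above. Set $W := W_{G,L}$ and $\fz := \fz(\fl)$, and recall from the preceding corollary that $\res^G_{P,L}$ identifies $\bM(\ug)_{(L,\cE)}$ with the category of modules over the monad $\lrsubsuper{L,P}{}{\st}{P,L}{} \simeq W_\ast$ on $\bM(\ul)_{(\cE)}$, under which $\ind^G_{P,L}$ is the free module functor. Both $\ind^G_{P,L}$ and $\ind^G_{P',L}$, restricted to $\bM(\ul)_{(\cE)}$, are left adjoint to $\res^G_{P,L}$ and $\res^G_{P',L}$ respectively; so it suffices to show that $\ind^G_{P',L}$ also takes values in $\bM(\ug)_{(L,\cE)}$ and is carried by $\res^G_{P,L}$ to the free module functor, and composing with the identification for $\ind^G_{P,L}$ will then give the desired isomorphism, manifestly natural in the cuspidal input.

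First I would check the containment $\ind^G_{P',L}(\bM(\ul)_{(\cE)}) \subseteq \bM(\ug)_{(L,\cE)}$. For cuspidal $\fN$ and a Levi $M$ not containing a conjugate of $L$, every term of the Mackey formula (Proposition \ref{propositionmackey}) for $\res^G_{Q,M}\ind^G_{P',L}(\fN)$ vanishes because $\dw_\ast\fN$ is cuspidal, so $\ind^G_{P',L}(\fN)\in\bM(\ug)_{\geq(L)}$ by Corollary \ref{corollarypartition}, exactly as in Lemma \ref{lemmarestrictadjoint}. Using $\ind^G_{P',L}\dashv\res^G_{P',L}$ and the fact that $\res^G_{P',L}$ annihilates $\bM(\ug)_{>(L)}$ (Corollary \ref{corollarypartition}(4), applied with $M=L$), the object $\ind^G_{P',L}(\fN)$ admits no nonzero morphism to any object of $\bM(\ug)_{>(L)}$; by the recollement of Subsection \ref{ssrecollement} (the exact sequence (\ref{equationses})) it follows that $\ind^G_{P',L}(\fN)$ lies in the quotient subcategory $\bM(\ug)_{(L)}$ and is a quotient of $\ind^G_{P,L}\res^G_{P,L}\ind^G_{P',L}(\fN)$. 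Finally, the argument of Proposition \ref{propositionmackeycuspidal} applies verbatim to the mixed Steinberg functor $\res^G_{P,L}\ind^G_{P',L}$ and, together with Lemma \ref{lemmaextvanish2} exactly as in the proof of Theorem \ref{theoremheart}, yields $\res^G_{P,L}\ind^G_{P',L}(\fN)\simeq\bigoplus_{w\in W}w_\ast\fN$, which lies in $\bM(\ul)_{(\cE)}$ since $\cE$ is $W$-equivariant (Theorem \ref{theoremequivariant}). Hence $\ind^G_{P',L}(\fN)\in\bM(\ug)_{(L,\cE)}$; one may alternatively invoke Lemma \ref{lemmasummand} at this point.

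It then remains to identify $\ind^G_{P',L}$, viewed via $\res^G_{P,L}$ as a functor to $W_\ast$-modules, with free module formation. Concretely I would re-run the proof of Theorem \ref{theoremheart} with the Steinberg stack $\St{P}{P'}$ and its associated integral kernel in place of $\St{P}{P}$: the Mackey filtration of the mixed kernel still has associated graded $\bigoplus_{w\in W}\cO_w$ by Propositions \ref{propositionmackey} and \ref{propositionmackeycuspidal}; a splitting exists by Lemma \ref{lemmaextvanish2}; the splitting is canonical over the regular locus because there both $\St{P}{P}$ and $\St{P}{P'}$ restrict to the action groupoid of $W$ on $\ul^\reg_\heartsuit$ (Proposition \ref{propositiondiagram}), so Lemma \ref{lemmaregmonad} goes through unchanged; and the comparison extends uniquely over all of $\fz\times\fz$ by Lemma \ref{lemmaextend}. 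This produces an isomorphism $\res^G_{P,L}\ind^G_{P',L}\simeq W_\ast$ of functors, and since by the previous step the source is valued in $W_\ast$-modules and the regular-locus comparison respects that module structure, the isomorphism is one of module-valued functors; i.e. $\ind^G_{P',L}$ is free module formation, hence canonically isomorphic to $\ind^G_{P,L}$ on $\bM(\ul)_{(\cE)}$. I expect the main obstacle to be precisely this last point — promoting the abstract splitting of the mixed Mackey filtration to an isomorphism compatible with the monad action, not merely of underlying objects. As in Theorem \ref{theoremheart}, this is forced by rigidity: the comparison is already a module isomorphism over the regular locus, and Lemma \ref{lemmaextend} leaves no room for it to degenerate upon extension to $\fz\times\fz$.
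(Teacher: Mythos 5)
Your proposal is correct and takes essentially the same approach as the paper: both hinge on re-running the argument of Theorem \ref{theoremheart} for the mixed Steinberg functor $\lrsubsuper{L,P}{}{\st}{P',L}{}$ (via the stack $\St{P}{P'}$, Mackey filtration, Ext-vanishing, splitting canonical over the regular locus via Proposition \ref{propositiondiagram}, unique extension via Lemma \ref{lemmaextend}) to produce a canonical isomorphism with $W_\ast$. The paper's version is terser --- it uses the $(\ind^G_{P,L},\res^G_{P,L})$ adjunction to reduce everything to the natural transformation $Id \to W_\ast$ given by inclusion of the identity --- while you explicitly check that $\ind^G_{P',L}$ lands in $\bM(\ug)_{(L,\cE)}$ and that the comparison respects the monad action (hence exhibits $\ind^G_{P',L}$ as free module formation), which fills in the verification, left implicit in the paper, that the resulting natural transformation is in fact an isomorphism.
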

	\begin{proof}
		By adjunction, the data of a natural transformation $\ind^G_{P,L} \to \ind^G_{P',L}$ is equivalent to that of a natural transformation
		\begin{equation}\label{equationnattrans}
		Id_{\bM(\ul)_{(\cE)}} \to \lrsubsuper{L,P}{}{\st}{P',L}{}
		\end{equation}
		By the same argument as in the proof of Theorem \ref{theoremheart}, there is a canonical natural isomorphism $\lrsubsuper{L,P}{}{\st}{P',L}{} \simeq \left(W_{G,L}\right)_\ast$. This isomorphism is constructed by first observing that such an isomorphism exists once we restrict to the regular locus (where neither functor depends on the choice of parabolic), and there is a unique extension over the non-regular locus. The inclusion of the identity $1 \in W_{G,L}$ defines the required natural transformation (\ref{equationnattrans}).
	\end{proof} 
By the uniqueness of adjoints, it follows that parabolic restriction $\res^G_{P,L}$ is also independent of the choice of parabolic when restricted to $\bM(\ug)_{(L)}$ (we may drop this last assumption soon).

		\begin{corollary}\label{corollaryorthogonal}
			The subcategories $\bM(\ug)_{(L,\cE)}$ and $\bM(\ug)_{(M,\cF)}$ are orthogonal if $(L,\cE)$ and $(M,\cF)$ are non-conjugate cuspidal data.
		\end{corollary}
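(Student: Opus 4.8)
The plan is to reduce the assertion to an orthogonality statement about parabolic inductions of \emph{cuspidal} objects, which then splits into two cases according to whether or not $L$ and $M$ are conjugate.

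First I would invoke Lemma~\ref{lemmasummand}: every object of $\bM(\ug)_{(L,\cE)}$ is a direct summand of $\ind^G_{P,L}(\fN)$ for some cuspidal $\fN \in \bM(\ul)_{(\cE)}$, and similarly every object of $\bM(\ug)_{(M,\cF)}$ is a direct summand of $\ind^G_{Q,M}(\fN')$ with $\fN' \in \bM(\um)_{(\cF)}$ cuspidal. Hence it is enough to show that $\bbHom\!\left(\ind^G_{P,L}(\fN),\ind^G_{Q,M}(\fN')\right)$ and $\bbHom\!\left(\ind^G_{Q,M}(\fN'),\ind^G_{P,L}(\fN)\right)$ both vanish. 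If $L$ and $M$ are not conjugate this is precisely Proposition~\ref{propositionsplit} (compare Remark~\ref{remarksummand}), so I would then assume $L$ and $M$ conjugate and, after conjugating, take $M=L$ and $Q=P$. Because $(L,\cE)$ and $(L,\cF)$ are non-conjugate cuspidal data, $\cE$ and $\cF$ lie in distinct orbits of $W_{G,L}=N_G(L)/L$ on the set of isomorphism classes of simple cuspidal local systems on $\ucN_L$; in particular $w_\ast\fF \not\simeq \fE$ and $w_\ast\fE \not\simeq \fF$ for every $w\in W_{G,L}$.

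Next I would use the bi-adjointness of $\ind^G_{P,L}$ and $\res^G_{P,L}$ on abelian categories (Theorem~\ref{maintheoremindres}, Corollary~\ref{corollarysecond}) to rewrite
\[
\bbHom\!\left(\ind^G_{P,L}(\fN),\ind^G_{P,L}(\fN')\right) \simeq \bbHom\!\left(\fN,\,\res^G_{P,L}\ind^G_{P,L}(\fN')\right).
\]
Since $\fN'$ is cuspidal, Proposition~\ref{propositionmackeycuspidal} furnishes a finite filtration of $\res^G_{P,L}\ind^G_{P,L}(\fN')$ whose associated graded pieces are the $w_\ast(\fN')$ for $w\in W_{G,L}$, and each $w_\ast(\fN')$ lies in the cuspidal block $\bM(\ul)_{(w\cdot\cF)}$, where $w\cdot\cF$ denotes the simple cuspidal local system $w_\ast\fF$. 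As $\cE \ne w\cdot\cF$ for all $w$, the orthogonal decomposition of the cuspidal category (Proposition~\ref{propositioncuspidal}) gives $\bbHom(\fN,w_\ast(\fN'))=0$ for each $w$; left-exactness of $\bbHom(\fN,-)$ together with an induction on the length of the filtration then yield $\bbHom(\fN,\res^G_{P,L}\ind^G_{P,L}(\fN'))=0$, hence $\bbHom(\ind^G_{P,L}(\fN),\ind^G_{P,L}(\fN'))=0$. Running the same computation with $\fN$ and $\fN'$ exchanged disposes of the other Hom-group.

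The argument is essentially formal: the only real work is assembling the reduction to cuspidals and the case distinction, after which the conjugate case falls out of the cuspidal Mackey formula (Proposition~\ref{propositionmackeycuspidal}) together with the orthogonality of cuspidal blocks (Proposition~\ref{propositioncuspidal}). So I do not expect a substantial obstacle; the point that deserves the most care is verifying that non-conjugacy of the cuspidal data translates into $\cE$ and $\cF$ lying in different $W_{G,L}$-orbits in the case where the Levis themselves are conjugate. One could alternatively carry out the whole argument in $\bD^b_{coh}(\ug)$ using $R\bbHom$ and the Verdier-duality symmetrisation already employed in Proposition~\ref{propositionsplit}; this variant simultaneously yields the derived orthogonality needed for Theorem~\ref{theoremod}.
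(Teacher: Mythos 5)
Your proof is correct and follows essentially the same route as the paper: reduce via Lemma~\ref{lemmasummand} to direct summands of parabolic inductions of cuspidals, invoke Proposition~\ref{propositionsplit} when the Levis are non-conjugate, and when the Levis coincide use the cuspidal Mackey filtration (Proposition~\ref{propositionmackeycuspidal}) together with the orthogonality of cuspidal blocks from Proposition~\ref{propositioncuspidal}. The paper's proof is terser, merely saying ``Applying the Mackey formula as in the proof of Proposition~\ref{propositionsplit} gives the result,'' but the adjunction-plus-filtration argument you spell out, including the observation that non-conjugacy of the cuspidal data forces $\cE$ and $w_\ast\cF$ to be non-isomorphic for all $w\in W_{G,L}$, is precisely what is meant.
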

		\begin{proof}
			By Proposition \ref{propositionsplit}, if $M$ is not conjugate to $L$ then the essential image of $\ind^G_{P,L}|_{cusp}$ is orthogonal to the essential image of $\ind^G_{Q,M}|_{cusp}$. Thus it follows that any direct summand of such objects must also be orthogonal, so by Lemma \ref{lemmasummand}, $\bM(\ug)_{(L)}$ is orthogonal to $\bM(\ug)_{(M)}$. It remains to consider the case when $M=L$, but $\cE$ is not conjugate to $\cF$. Recall that by the results of Subsection \ref{sscuspidal}, $\bM(\ul)_{(\cE)}$ is orthogonal to $\bM(\ul)_{(\cF)}$, and these blocks are preserved by the action of $W_{G,L}$ by Theorem \ref{theoremequivariant}. Applying the Mackey formula as in the proof of Proposition \ref{propositionsplit} gives the result.
	\end{proof}	
	
At this point, the proof of Theorems \ref{theoremabelian}, \ref{maintheoremindres}, and \ref{maintheorempartition} is just a matter of putting together the pieces.
	
	\begin{proof}[Proof of Theorem \ref{theoremabelian}]
	We have shown that the subcategories, $\bM(\ug)_{(L,\cE)}$ are pairwise orthogonal, and each is equivalent to $\bM(\fz(\fl))^{W_{G,L}}$ (for the relevant $L$). By the recollement situation of \ref{ssrecollement}, every object of $\bM(\ug)$ is an iterated extension of objects of $\bM(\ug)_{(L,\cE)}$; we now know that these extensions must all split. We have proved Theorem \ref{theoremabelian}:
\begin{equation}
\label{equationdecomposition}
\bM(\ug) = \bigoplus_{(L,\cE)} \bM(\ug)_{(L,\cE)} \simeq \bigoplus_{(L,\cE)} \bM(\fz(\fl))^{W_{G,L}}
\end{equation}
\end{proof}


It remains to clear up a few points in Theorem \ref{maintheoremindres}, namely, the dependence on the parabolic and the splitting of the Mackey filtration. These points have been addressed (by Corollary \ref{corollaryindependent} and Theorem \ref{theoremheart}) in the case of parabolic induction and restriction on a single cuspidal block. Thus, we must look at the functor of induction and restriction on all blocks at once.

Consider the functor $\res^G_{Q,M}$ for a fixed parabolic subgroup $Q$ with Levi factor $M$. If $L$ is not conjugate to a subgroup of $M$, then by the Mackey formula, $\res^G_{Q,M}$ kills the summand $\bM(\ug)_{(L,\fE)}$. On the other hand, if $L \subseteq M$, and $P \subseteq Q$, then $(L,\cE)$ can also be considered as a cuspidal datum for $M$. The compatibility of Proposition \ref{propositionindresfactor} implies that the functor $\res^G_{Q,M}$ restricts to a functor from $\bM(\ug)_{(L,\cE)}$ to $\bM(\um)_{(L,\cE)}$. Under the equivalence of (\ref{equationdecomposition}), this corresponds to the forgetful functor:
\[
\xymatrix{
	\Upsilon^{W_{G,L}}_{W_{M,L}}:\bM(\fz(\fl))^{W_{G,L}} \ar[r] & \bM(\fz(\fl))^{W_{M,L}}
}
	\] 
A similar statement holds for $\ind^G_{Q,M}$, which corresponds to the induction functor:
\[
\xymatrix{
\Gamma^{W_{G,L}}_{W_{M,L}}:\bM(\fz(\fl))^{W_{M,L}} \ar[r] & \bM(\fz(\fl))^{G_{M,L}}
}
\] 
Putting this all together we have the following commutative diagram (when read from top left to bottom right or from top right to bottom left): 
	\[
		\xymatrixcolsep{5pc}
		\xymatrixrowsep{3pc}
		\xymatrix{
			\bM(\ug) \ar@/^/[r]^-{\res^{G}_{Q,M}} & \ar@/^/[l]^-{\ind^{G}_{Q,M}} \bM(\um) \\
			\bigoplus\limits_{\substack{(L,\cE)\\L\subseteq G}}^\perp \bM(\fz(\fl))^{W_{G,L}}
			\ar@{<->}[u]^-\wr  \ar@/^/[r] & \ar@/^/[l] \bigoplus\limits_{\substack{(L,\cE)\\L\subseteq M}}^\perp \bM(\fz(\fl))^{W_{M,L}} \ar@{<->}[u]_-\wr
		}
		\]
	
The remaining parts of Theorem \ref{maintheoremindres} now follow: the Mackey formula must split (as it is equivalent to the Mackey formula corresponding to induction and restriction for finite groups) and the functors parabolic induction and restriction are independent of the choice of parabolic (as they are on each block). 

\subsection{Singular support and parabolic restriction}\label{ss:ss}

The goal of this subsection is to prove the following result (which completes the proof of Theorem C).

\begin{remark}
	We fix a non-degenerate $G$-invariant symmetric bilinear form on $\fg$. Given an object $\fM \in \bM(\ug)$ its singular support $SS(\fM)$ is considered as a $G$-stable closed subset of the commuting variety $\comm(\fg) \subseteq \fg\times \fg$ which is contained in the commuting variety $\comm(\fg)$. It is conic with respect to the scaling action on the second factor. Given $x\in \fg$ we write $SS(\fM)_x$ for $\{y\in \fg \mid (x,y) \in SS(\fM)\}$. Note that this is non-empty if and only if $x\in Supp(\fG)$.
\end{remark}

\begin{theorem}\label{thm:ss}
	Let $\fG \in \bM(\ug)_{(L)}$ be a non-zero object. Then we have
	\begin{enumerate}
		\item 
		\[
		SS(\fG) \subseteq \comm(\fg)_{\geq (L)}
		\]
		\item
		\[
		SS(\fG) \cap \comm(\fg)_{\leq (L)} \neq \emptyset
		\]
	\end{enumerate}
\end{theorem}

Recall that the condition $\fG\in \bM(\ug)_{(L)}$ means that $\fG = \ind^G_L(\fL)^{W_G,L}$ for some cuspidal object $\fL \in \bM(\ul)_{cusp}$ (we also have $\fL \cong \res^G_L(\fG)$). The first part of the theorem is relatively straightforward. It is a consequence of the standard bounds on singular support with respect to a smooth pullback and proper pushforward (see e.g. \cite{kashiwara_d-modules_2003}[Theorem 4.7, Theorem 4.27]). Indeed, parabolic induction is a composite of such functors, and by \cite{gunningham_generalized_2018}[Lemma 3.8] we have $SS(\fL) \subseteq \comm(\fl)_\heartsuit$ from which the required bound on $SS(\fG)$ follows (using that $\fG$ is a direct summand of $\ind^G_L(\fL)$). Alternatively, it follows from the ``if'' direction of the proof of Lemma 3.19 in \cite{gunningham_generalized_2018} (which does not make use of the erroneous results). Indeed, by assumption we have $\res^G_M(\fG)\cong 0$ for all Levis $M$ with $(M) \ngeq (L)$. Then \emph{loc. cit.} states that 
\[
SS(\fM) \subseteq \bigcap_{(M)\ngeq(L)} \comm(\fg)_{\nleq(M)} = \comm(\fg)_{\geq(L)}
\]

The second part of Theorem \ref{thm:ss} is more tricky, as singular support is hard to control under parabolic restriction. The remainder of the subsection is devoted to its proof. Let us first note that singular support is preserved by parabolic restriction once we restrict to the regular locus:
\begin{lemma}\label{lem:noncharacteristic}
	Let  be as above, and take $x\in \fl^\reg$. Then
	\[
	SS(\res^G_L(\fG))_x = SS(\fG)_x.
	\]
\end{lemma}
\begin{proof}
	Recall that $\res^G_L(\fG)|_{\ul^\reg}$ is identified with $(d^\reg)^!\fG$ where $d^\reg:\ul^\reg \to \ug$ is an \'etale morphism. The lemma follows from the fact that singular support is preserved by \'etale (or more generally, smooth) morphisms. Alternatively, in non-stacky terms the result follows from the fact that the inclusion $\fl^\reg \hookrightarrow \fg$ is non-characteristic with respect to $G$-equivariant $D$-modules on $\fg$.
\end{proof}

The following key lemma relates the singular support of a nilpotent orbital $D$-module and its Fourier transform.

\begin{lemma}\label{lem:nilp}
	Suppose $\fG \in \bM(\ug)$ is supported on the nilpotent cone and $\fL := \res^G_L(\fG)$ is non-zero. Then
	\[
	SS(\fG)_0 \cap \fg_{\leq (L)} \neq \emptyset
	\]
\end{lemma}
\begin{proof}
	Note that $\fL$ is supported in the nilpotent cone $\cN_L$ (and in particular, is $\G_m$-monodromic and regular holonomic). Consider the decomposition $\fl = \fz(\fl) \oplus [\fl,\fl]$. As $\fL$ is supported on $\cN_L \subseteq [\fl,\fl]$, its Fourier transform takes the form 
	\[
	\F_\fl(\fL) \cong \cO_{\fz(\fl)} \boxtimes \F_{[\fl,\fl]}(\fL)
	\]
	where in the second factor we consider $\fL$ as a $D$-module on $[\fl,\fl]$. In particular, any $y\in \fz(\fl)^\reg$ is contained in $Supp(\cO_{\fz(\fl)})$ and thus in $Supp(\F_{\fl} \fL)$ (note that $\F_{[\fl,\fl]}\fL$ is $\G_m$-monodromic, so $0$ is contained in its support).
	
	Now recall that Fourier transform commutes with parabolic restriction (see e.g.  \cite{gunningham_generalized_2018}[Lemma 3.7]). Thus we can apply Lemma \ref{lem:noncharacteristic} to $\F_\fg(\fG)$ and $\F_\fl(\fL)$ to deduce that 
	\[
	SS(\F_{\fg}\fG)_y = SS(\res^G_L \F_{\fg}\fG) = SS(\F_{\fl}\fL)_y
	\]
	By assumption, the right hand side is a non-empty conical closed subset of $\fg$, and thus contains the element $0$. Finally, by Proposition \ref{prop:brylinski}, we have that $(0,y) \in SS(\fG) \cap \fg_{\leq (L)}$ as required.
\end{proof}

\begin{proof}[Proof of Theorem \ref{thm:ss}]
	It remains to prove part 2 of the theorem. By assumption we have $\fG = \ind^G_L(\fL)^{W_{G,L}}$ for some cuspidal $\fL \in \bM(\ul)_{cusp}$. Recall that $\fL$ is supported in $\fl_\heartsuit = \fz(\fl) \times \cN_L$. We must find $(x,y) \in SS(\fG)$ with $H_G(x,y) \subseteq L$ (or equivalently, $H_G(x,y)=L$). 
	
	If $Supp(\fL)$ intersects $\fl^\reg$ non-trivially, we are done by Lemma \ref{lem:noncharacteristic}. On the other hand if $Supp(\fL) \subseteq \cN_L$ we are done by Lemma \ref{lem:nilp}. In general, we consider the locally closed partition of $fz(\fl) \times \cN_L$ given by $\fz(\fm)^\reg \times \cN_L$ as $\fm$ ranges over the (finitely many) Levi subalgebras of $\fg$ containing $\fl$. By Lemma \ref{lem:stratifications} we can find such a Levi subalgebra $\fm$ such that $\fL' := R^0\Gamma_{\fz(\fm) \times \cN_L} \fL$ is a non-zero submodule of $\fL$ and there exists $x\in Supp(\fL')\cap \fz(\fm)^\reg \times \cN_L$. Let $M$ denote the Levi subgroup of $G$ corresponding to $\fm$ and consider the object 
	\[
	\fM := \res^G_M(\fG) \cong \ind^M_L(\fL)^{W_{M,L}} \in \bM(\um)
	\]
	As $\fz(\fm) \times \cN_L$ is $W_{M,L}$-stable, $\fL'$ is still $W_{M,L}$-equivariant (though not in general $W_{G,L}$-equivariant) and we obtain a non-zero submodule
	\[
	\fM' = \ind^M_L(\fL)^{W_{M,L}} \hookrightarrow \fM
	\]
	with $x\in Supp(\fM') \cap \fm^\reg$. We have now reduced to finding $y\in SS(\fM')_x$ with $H_M(y)\subseteq L$ (in that case, by Lemma \ref{lem:noncharacteristic}, $y\in SS(\fG)_x$ and $H_G(x,y) = H_G(x) \cap H_G(y) = G_M(y) \subseteq L$ as required). 
	
	Let us split off the copy of $\fz(\fm)$ from $\fl$ using our fixed invariant form; that is, we consider an orthogonal decomposition:
	\[
	\fl = \fz(\fm) \oplus \fl'
	\]
	We have a corresponding decomposition $\fm = \fz(\fm) \oplus \fm'$. Note that $\fl'$ can be considered as a Levi subalgebra of the Lie algebra $\fm' = [\fm,\fm]$.
	Recall that $\fL'$ can be written as external tensor products with respect to this decomposition
	\[
	\fL' \cong \fL'_s \boxtimes \fL'_n
	\]
	where $\fL'_n$ is supported on the nilpotent cone of $\fl'$ (see e.g. \cite{gunningham_generalized_2018}[Proposition 4.21]). As in the proof of \cite{gunningham_generalized_2018}[Lemma 4.19], we have that parabolic induction and restriction between $L$ and $M$ acts as the identity on the $\fz(\fm)$ factor. Thus $\fM$ is also an external tensor product (with the first factor unchanged)
	\[
	\fM' \cong \fL'_s  \boxtimes \fM'_n
	\] 
	Applying Lemma \ref{lem:nilp} (with $G$ replaced by $M$ and $\fG$ replaced by $\fM'_n$) we deduce that there exists $y\in SS(\fM'_n)_0$ with $H_M(y) \subseteq L$. We claim that $(x,y) \in SS(\fM')$. Indeed, note that $SS(\fM')$ can be identified with $SS(\fL'_s) \times SS(\fM'_n)$, and under this identification, the element $(x,y)$ corresponds to $(x,0)$ in the first factor and $(0,y)$ in the second. Thus we have found the required element $y \in SS(\fM')_x$ completing the proof of Theorem \ref{thm:ss}.
\end{proof}

\appendix

\section{Monads and Recollement situations}\label{appendixcat}
We review the Barr-Beck theorem in the context of Grothendieck abelian categories and explain how certain adjunctions give rise to recollement situations. The case when the monad is given by the action of a finite group is of particular interest to us. The proof of the Barr-Beck theorem can be found in in \cite{barr_toposes_1985}, and the background material on abelian categories in \cite{popescu_abelian_1973}. A modern treatment of the theory of recollement situations for abelian categories can be found in \cite{franjou_comparison_2004}. We will only give brief sketches of the proofs here.

Throughout this appendix, we maintain the following:
	\begin{assumptions}\label{assumptions}
	Let $\cC$ and $\cD$ be Grothendieck abelian categories,  
	$
	F:\cD \to \cC
	$
	is an functor with a left adjoint $F^L$. We assume that both $F$ and $F^L$ are exact and preserve direct sums (it follows from the adjoint functor theorem that $F$ also has a right adjoint $F^R$).
	\end{assumptions}

	\subsection{The Barr-Beck theorem and recollement situations}\label{appendixbb}
	Let $T=FF^L$ denote the corresponding monad acting on $\cC$. We denote by $\cC^T$ the category of $T$-modules (also known as $T$-algebras) in $\cC$. Note that for any object $d\in \cD$, $F(d)$ is a module for $T$. Thus we have the following diagram:
	\[
	\xymatrix{
		\cD \ar[rr]^F \ar[rd]_{\widetilde{F}} & & \cC\\
		& \cC^T \ar[ru] &
	}
	\]
	\begin{definition}
		A functor $F: \cD \to \cC$ is called \emph{conservative} if whenever $F(x) \simeq 0$
		then $x \simeq 0$.\footnote{The usual definition of a conservative functor is a functor $F$
			such that if $F(\phi)$ is an isomorphism, then $\phi$ is an isomorphism. This definition is
			equivalent to the one above, in our context, by considering the the kernel and cokernel
			of $\phi$.}
	\end{definition}
	\begin{theorem}[Barr-Beck \cite{barr_toposes_1985}]\label{theorembarrbeck}
		The functor $\widetilde{F}:\cD \to \cC^T$ has a fully faithful left adjoint, $J$. If $F$ is conservative, then $\widetilde{F}$ and $J$ are inverse equivalences.
	\end{theorem}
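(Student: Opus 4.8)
The plan is to construct the adjoint $J$ by hand as a ``bar'' coequalizer and then verify the two assertions by formal arguments, the only substantive input being exactness of $F$. Given a $T$-module $(c,\theta\colon Tc\to c)$, I would set $J(c,\theta)\in\cD$ to be the coequalizer of the reflexive pair
\[
F^L T c \;=\; F^L F F^L c \;\rightrightarrows\; F^L c,
\]
whose two arrows are $F^L(\theta)$ and $\varepsilon_{F^L c}$, where $\varepsilon\colon F^L F\to\mathrm{id}_\cD$ is the counit of the adjunction $F^L\dashv F$ (the common section being $F^L\eta_c$, with $\eta\colon\mathrm{id}_\cC\to T$ the monad unit). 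These coequalizers exist because $\cD$ is Grothendieck, hence cocomplete; concretely the coequalizer is the cokernel of the difference of the two arrows, and functoriality of $(c,\theta)\mapsto J(c,\theta)$ is immediate.

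First I would check that $J$ is left adjoint to $\widetilde F$. Applying $\mathrm{Hom}_\cD(-,d)$ to the coequalizer presentation of $J(c,\theta)$ identifies $\mathrm{Hom}_\cD(J(c,\theta),d)$ with the equalizer of $\mathrm{Hom}_\cD(F^L c,d)\rightrightarrows\mathrm{Hom}_\cD(F^L F F^L c,d)$; unwinding the two arrows through $F^L\dashv F$ (applied twice) and the triangle identities identifies this equalizer with the set of morphisms $f\colon c\to F(d)$ in $\cC$ satisfying $f\circ\theta=(F\varepsilon_d)\circ T(f)$, that is, with $\mathrm{Hom}_{\cC^T}\big((c,\theta),\widetilde F(d)\big)$. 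Naturality in both variables being clear, this yields $J\dashv\widetilde F$.

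Next, $J$ is fully faithful exactly when the unit $u\colon\mathrm{id}_{\cC^T}\to\widetilde F J$ is an isomorphism, and here exactness enters: since $F$ is exact it preserves cokernels, hence coequalizers, so $F\big(J(c,\theta)\big)$ is the coequalizer in $\cC$ of $T T c\rightrightarrows T c$ with arrows $\mu_c$ (the monad multiplication) and $T\theta$. But $T T c\rightrightarrows T c\xrightarrow{\theta} c$ is the canonical \emph{split} coequalizer presentation of the $T$-algebra $(c,\theta)$ (split by $\eta_c$ and $\eta_{Tc}$), so $F\big(J(c,\theta)\big)\cong c$; tracking the module structures shows this identification respects the $T$-actions and is the component $u_{(c,\theta)}$ of the unit. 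Hence $u$ is invertible and $J$ is fully faithful.

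Finally, assuming $F$ conservative, I would deduce that the counit $\pi\colon J\widetilde F\to\mathrm{id}_\cD$ is an isomorphism, which together with the previous step makes $\widetilde F$ and $J$ mutually inverse equivalences. From the triangle identity $(\widetilde F\pi)\circ(u\widetilde F)=\mathrm{id}_{\widetilde F}$ and the invertibility of $u$ we get that $\widetilde F(\pi_d)$ is an isomorphism in $\cC^T$ for every $d\in\cD$; applying the forgetful functor $\cC^T\to\cC$, which sends $\widetilde F$ to $F$ and preserves isomorphisms, shows $F(\pi_d)$ is an isomorphism in $\cC$, whence $\pi_d$ is an isomorphism by conservativity. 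I expect no genuinely hard step: once $J$ is in place the adjunction and the conservative case are purely formal, and the one place requiring care is the identification in the third paragraph, where exactness of $F$ must be used to commute $F$ past the defining coequalizer and the result recognised as the standard split-coequalizer resolution of a $T$-algebra. Working in Grothendieck abelian categories rather than with sets is harmless, since every coequalizer appearing is a cokernel of a difference and every functor in play is exact.
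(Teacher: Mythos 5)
Your proposal is correct and follows essentially the same route as the paper: define $J$ by the bar coequalizer $F^L F F^L c \rightrightarrows F^L c$, verify $J\dashv\widetilde F$, and use exactness of $F$ to recognize $F(J(c,\theta))$ as a split coequalizer so that the unit is invertible, with conservativity finishing the equivalence. The paper only sketches these verifications, and you fill in the standard details (the Hom-equalizer check of the adjunction and the split-fork identification) correctly.
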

	There is an explicit formula for the left adjoint, as follows. Given an object $c \in \cC^T$, consider the diagram
		\begin{equation}\label{diagramsimplicial}
			F^LFF^Lc \rightrightarrows	F^Lc 
		\end{equation}
		where one of the maps is given by the $T$-module structure on $c$, and the other is given by the counit map of the adjunction. Then $J(c)$ is defined to be the coequalizer of this diagram, or in other words, the cokernel of the map given by the difference of the two maps above.
		One can check that $J$ is left adjoint to $\widetilde{F}$ and that the unit $ c \xrightarrow{\sim} \widetilde{F}J(c)$, so that $J$ is fully faithful as required.

	\subsection{Adjunctions and Recollement situations}\label{appendixrecollement}
	We further study the case when the functor $F$ is not necessarily conservative. 
	Let $\cK$ denote the kernel of $F$, i.e. the full subcategory of $\cD$ consisting of objects $d$ such that $F(d) \simeq 0$. Let $\cQ$ denote the quotient category $\cD/\cK$, which is the localization of $\cD$ with respect to the multiplicative system of morphisms that are taken to isomorphisms under the functor $F$. The subcategory $\cK$ is automatically localizing, and thus the quotient morphism $\fj^\ast: \cD \to \cQ$ has a fully faithful \emph{right} adjoint, denoted $\fj_\ast$. (see e.g. \cite{popescu_abelian_1973}). On the other hand, $F$ descends to a conservative functor $F_\cQ:\cQ \to \cC$. Moreover, one readily checks that the left adjoint to $F_\cQ$ is given by the composite $\fj^\ast F^L$, and the resulting adjunction $\cQ \leftrightarrows \cC$ affords the same monad $T$ on $\cC$. Thus, by Theorem \ref{theorembarrbeck} (Barr-Beck), we can identify $\cQ$ with the category of $T$-modules, $\cC^T$. Using this identification, the bar construction defines a fully faithful \emph{left} adjoint to $\fj^\ast$, which we denote $\fj_!$. 
	
	Let $\ffi_\ast: \cK \to \cD$ denote the embedding.
	Consider the functor $\fk:\cD \to \cD$ given by the cokernel of the unit map $Id_\cD\to \fj_!\fj^\ast$. The essential image of the functor $\fk$ is contained in $\cK$, and thus we can write $\fk = \ffi_\ast \ffi^\ast$ for where $\ffi^\ast: \cD \to \cK$ is left adjoint to $\ffi_\ast$. Similarly, the kernel of the counit map $\fj_\ast \fj^\ast \to Id_\cD$ is of the form $\ffi_\ast\ffi^!$, where $\ffi^!$ is right adjoint to $\ffi_\ast$.

	These facts are summarized as follows:
	
	\begin{theorem}\label{theoremcoloc}
		There is a recollement situation:
	\[
	\xymatrixcolsep{7pc}
	\xymatrix{
		\cK \ar[r]^{\ffi_{\ast}} & \ar@/_2pc/[l]_{\ffi^\ast}\ar@/^2pc/[l]^{\ffi^!} \cD \ar[r]^{\fj^\ast} & \ar@/^2pc/[l]^{\fj_{!}} \ar@/_2pc/[l]_{\fj_{\ast}} \cQ,
	}
	\]
		\end{theorem}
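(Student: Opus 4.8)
The plan is to assemble the recollement from the two classical inputs already recalled above: Gabriel localization theory, which produces the fully faithful right adjoint $\fj_\ast$ to the quotient functor $\fj^\ast:\cD\to\cQ$, and the Barr--Beck theorem, which simultaneously identifies $\cQ$ with $\cC^T$ and furnishes the fully faithful left adjoint $\fj_!$. Once both adjoints to $\fj^\ast$ are in hand, the functors $\ffi^\ast,\ffi^!$ and the two canonical short exact sequences are extracted formally from the units and counits. First I would record that $\cK=\ker F$ is a Serre subcategory of the Grothendieck category $\cD$ closed under arbitrary direct sums (because $F$ is exact and preserves direct sums), hence localizing; therefore $\cK$ is itself Grothendieck, the quotient $\cQ=\cD/\cK$ is Grothendieck, and $\fj^\ast$ admits a fully faithful right adjoint $\fj_\ast$ with $\ker\fj^\ast=\cK$.

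Next I would treat the left adjoint. Since $F$ annihilates $\cK$, it factors as $\bar F\circ\fj^\ast$ for an exact, direct-sum-preserving functor $\bar F:\cQ\to\cC$, and $\bar F$ is conservative because an object of $\cQ$ killed by $\bar F$ lifts to an object of $\cD$ killed by $F$, i.e.\ lying in $\cK$, hence zero in $\cQ$. A left adjoint to $\bar F$ is $\fj^\ast F^L$ (combine $F^L\dashv F$, $\fj^\ast\dashv\fj_\ast$ and $\bar F\fj^\ast\fj_\ast\simeq\bar F$), and the associated monad on $\cC$ is $\bar F\fj^\ast F^L=FF^L=T$. Barr--Beck (Theorem \ref{theorembarrbeck}) applied to $\bar F$ then gives an equivalence $\cQ\simeq\cC^T$, so the bar-construction functor (the fully faithful left adjoint to $\widetilde F$) transports to a fully faithful left adjoint $\fj_!$ of $\fj^\ast$. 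This yields the adjoint triple $(\fj_!,\fj^\ast,\fj_\ast)$ with $\fj_!$ and $\fj_\ast$ fully faithful, and $\fj^\ast\ffi_\ast=0$ since $\ffi_\ast$ lands in $\cK=\ker\fj^\ast$.

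Then I would build the $\ffi$-side. The inclusion $\ffi_\ast:\cK\hookrightarrow\cD$ is fully faithful. Because $\fj_!$ is fully faithful, applying $\fj^\ast$ to the counit $\fj_!\fj^\ast d\to d$ yields an isomorphism, so $\fj^\ast$ kills the cokernel of this counit; define $\ffi^\ast d$ to be that cokernel regarded as an object of $\cK$, giving a functorial sequence $\fj_!\fj^\ast d\to d\to\ffi_\ast\ffi^\ast d\to 0$. Applying $\bbHom_\cD(-,\ffi_\ast k)$ and using $\fj^\ast\ffi_\ast=0$ (which forces $\bbHom_\cD(\fj_!\fj^\ast d,\ffi_\ast k)\simeq\bbHom_\cQ(\fj^\ast d,\fj^\ast\ffi_\ast k)=0$) together with full faithfulness of $\ffi_\ast$ gives $\bbHom_\cD(d,\ffi_\ast k)\simeq\bbHom_\cK(\ffi^\ast d,k)$, i.e.\ $\ffi^\ast\dashv\ffi_\ast$. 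Dually, applying $\fj^\ast$ to the unit $d\to\fj_\ast\fj^\ast d$ (an isomorphism since $\fj_\ast$ is fully faithful) shows its kernel lies in $\cK$; setting $\ffi^! d$ equal to that kernel produces a functorial sequence $0\to\ffi_\ast\ffi^! d\to d\to\fj_\ast\fj^\ast d$ and, by the analogous $\bbHom$ computation, the adjunction $\ffi_\ast\dashv\ffi^!$. Combined with the triple $(\fj_!,\fj^\ast,\fj_\ast)$, the identity $\fj^\ast\ffi_\ast=0$, and the identification $\cK=\ker\fj^\ast$, this is precisely the data of a recollement, which proves Theorem \ref{theoremcoloc}.

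The point requiring genuine care — and the main obstacle — is the identification $\cQ\simeq\cC^T$ underlying the existence of $\fj_!$: one must check that $\cK$ really is the \emph{full} kernel of $\fj^\ast$ (so that the kernels and cokernels produced above land in $\cK$ rather than in something larger), and that Barr--Beck genuinely applies, which in the abelian (rather than derived or $\infty$-categorical) setting rests on the hypothesis that $F^L$ is exact and preserves direct sums, so that the bar-construction coequalizer computes the adjoint correctly. The remaining work — the exactness bookkeeping for the six functors and the triangle-identity manipulations — is routine.
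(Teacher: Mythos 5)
Your proposal is correct and follows the same route the paper takes: treat $\cK=\ker F$ as a localizing subcategory to get $\fj_\ast$ from Gabriel localization, obtain $\fj_!$ by descending $F$ to a conservative comonadic/monadic functor on $\cQ$ and invoking Barr--Beck (Theorem~\ref{theorembarrbeck}) to identify $\cQ\simeq\cC^T$, and then extract $\ffi^\ast$ and $\ffi^!$ as the cokernel of the counit $\fj_!\fj^\ast\to\mathrm{Id}$ and the kernel of the unit $\mathrm{Id}\to\fj_\ast\fj^\ast$ respectively. The only difference is that you supply the $\bbHom$-computations establishing $\ffi^\ast\dashv\ffi_\ast\dashv\ffi^!$, which the paper asserts without proof; you also (correctly) orient the counit $\fj_!\fj^\ast\to\mathrm{Id}$ the right way, where the paper's prose has a directional slip calling it the ``unit map $\mathrm{Id}\to\fj_!\fj^\ast$.''
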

	In particular:
		\begin{itemize}
			\item The functor $\fj^\ast$ is left adjoint to $\fj_\ast$, and right adjoint to $\fj_!$.
			\item The functor $\ffi_\ast$ is left adjoint to $\ffi^!$, and right adjoint to $\ffi^\ast$.
			\item The functors $\fj_\ast$, $\fj_!$, and $\ffi_\ast$ are fully faithful;
			\item There are exact sequences of functors
			\begin{align*}
			0 \to \ffi_\ast \ffi^! \to Id_\cD \to \fj_\ast \fj^\ast\\
			\fj_! \fj^\ast \to Id_\cD \to \ffi_\ast \ffi^\ast \to  0
			\end{align*}
				\end{itemize}
			Also note that, by the construction of the left adjoint $\fj_!$, the essential image of $\cQ$ 			in $\fj_!$ consists of quotients of objects in the essential image of $F^L$.

	\begin{remark}
		If the categories $\cC$ and $\cD$ are compactly generated, and the functor $F$, in addition, takes compact objects in $\cD$ to compact objects in $\cC$, then the right adjoint $F^R$ preserves direct sums. In that case, the recollement situation of Theorem \ref{theoremcoloc} restricts to one on the level of small categories of compact objects (this is the case for the main example in this paper).
	\end{remark}

	\subsection{Monads and finite group actions}\label{subsectionfinite}
	Suppose additionally that a finite group $W$ acts on $\cC$. This means that there are functors
	\[
	w_\ast: \cC \to \cC,
	\]
	together with natural isomorphisms $\phi_{w,v}: w_\ast v_\ast \xrightarrow{\sim} (wv)_{\ast}$ satisfying the natural cocycle condition. There is an associated monad $W_\ast$ acting on $\cC$ given by the formula:
	\[
	W_\ast (c) = \bigoplus _{w\in W} w_\ast (c).
	\]
	The category $\cC^W$ of $W$-equivariant objects, is equivalent to the category of modules for the monad $W_\ast$. 
	
	\begin{remark}
		The functor $W_\ast$ also admits the structure of a comonad acting on $\cC$ (and $W$-equivariant objects in $\cC$ can also be identified with comodules for this comonad). Considering this monad and comonad structure together, one arrives at the notion of Frobenius monad.
	\end{remark}
	
	\begin{proposition}\label{propositiongroup}
		Suppose there is an isomorphism of monads $T \simeq W_\ast$. Given an object $c \in \cC^W$, $W$ acts by automorphisms on the object $F^L(c)$ such that the object $\fj_!(c)$ is given by the coinvariants $F^L(c)_W$ of this action.
	\end{proposition}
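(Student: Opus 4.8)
Write $T = FF^{L}$ and identify $\cQ$ with $\cC^{T}$ as in Theorem~\ref{theorembarrbeck}, so that $\fj_{!}$ is the functor $J$ computed by the bar construction: for a $T$-module $c$ with structure map $a\colon Tc \to c$ one has
\[
\fj_{!}(c) = \coker\!\bigl(F^{L}(a) - \epsilon_{F^{L}c}\colon F^{L}FF^{L}(c) \too F^{L}(c)\bigr),
\]
where $\epsilon\colon F^{L}F \to \Id_{\cD}$ is the counit. The plan is to feed the isomorphism of monads $\psi\colon T \isom W_{\ast} = \bigoplus_{w\in W} w_{\ast}$ into this formula. Because $F^{L}$ preserves direct sums, $F^{L}FF^{L}(c) = F^{L}(Tc) \cong \bigoplus_{w\in W} F^{L}(w_{\ast}c)$; under this decomposition the $W_{\ast}$-module structure on $c$ is a family of maps $\theta_{w}\colon w_{\ast}c \to c$, automatically isomorphisms since $W$ is a group, and $F^{L}(a)$ restricts to $F^{L}(\theta_{w})$ on the $w$-th summand. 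What remains is to produce the $W$-action on $F^{L}(c)$, identify the counit term $\epsilon_{F^{L}c}$ in these coordinates, and recognise the cokernel above as a coinvariants cokernel.

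First I would define the action. Let $\iota_{w}\colon w_{\ast}c \hookrightarrow Tc = FF^{L}(c)$ be the inclusion of the $w$-th summand and set
\[
\rho_{w} := \epsilon_{F^{L}c}\circ F^{L}\!\bigl(\iota_{w}\circ\theta_{w}^{-1}\bigr) \in \End_{\cD}\!\bigl(F^{L}(c)\bigr).
\]
Since $\psi$ carries the monad unit $\eta\colon \Id_{\cC}\to T$ to the inclusion $\iota_{e}$ of the identity summand, $\iota_{e}\circ\theta_{e}^{-1} = \eta_{c}$ and hence $\rho_{e} = \Id$ by the triangle identity. To check the composition law $\rho_{v}\rho_{w} = \rho_{wv}$ I would pass to adjoint transposes across $\bbHom_{\cD}(F^{L}c,F^{L}c)\cong \bbHom_{\cC}(c,Tc)$: the transpose of $\rho_{w}$ is $\iota_{w}\circ\theta_{w}^{-1}$, while the transpose of $\rho_{v}\rho_{w}$ works out to $\mu_{c}\circ T(\iota_{v}\theta_{v}^{-1})\circ\iota_{w}\theta_{w}^{-1}$, where $\mu = F\epsilon F^{L}$ is the multiplication of $T$. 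Unwinding the direct-sum description of $T$ and using that $\psi$ identifies $\mu$ with the coherence isomorphisms $\phi_{u,v}$ of the $W$-action, this composite is the inclusion into the $wv$-th summand of $\phi_{w,v}\circ w_{\ast}(\theta_{v}^{-1})\circ\theta_{w}^{-1}$; the module axiom $\theta_{wv}\circ\phi_{w,v} = \theta_{w}\circ w_{\ast}(\theta_{v})$ then rewrites this as $\iota_{wv}\circ\theta_{wv}^{-1}$, the transpose of $\rho_{wv}$. (This yields $\rho_{v}\rho_{w} = \rho_{wv}$; reindexing $w\mapsto w^{-1}$ gives a genuine left $W$-action, and the coinvariants below are insensitive to the distinction. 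In particular $\rho_{w}\rho_{w^{-1}} = \rho_{e} = \Id$, so each $\rho_{w}$ is an automorphism.) This coherence bookkeeping is the one step I expect to carry real content; the rest is formal.

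Finally I would assemble everything. Let $\Theta := \bigoplus_{w} F^{L}(\theta_{w})\colon \bigoplus_{w} F^{L}(w_{\ast}c) \isom \bigoplus_{w} F^{L}(c)$. On the $w$-th summand $F^{L}(a)\circ\Theta^{-1} = F^{L}(\theta_{w})\circ F^{L}(\theta_{w})^{-1} = \Id$, so $F^{L}(a)\circ\Theta^{-1}$ is the codiagonal $\nabla$; and by the definition of $\rho_{w}$ the $w$-th component of $\epsilon_{F^{L}c}$ is $\rho_{w}\circ F^{L}(\theta_{w})$, so $\epsilon_{F^{L}c}\circ\Theta^{-1} = \nabla\circ\bigl(\bigoplus_{w}\rho_{w}\bigr)$. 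Precomposing the bar-construction cokernel with the isomorphism $\Theta^{-1}$ does not change it, whence
\[
\fj_{!}(c) = \coker\!\bigl(\nabla - \nabla\circ{\textstyle\bigoplus_{w}}\rho_{w}\bigr) = \coker\!\Bigl({\textstyle\bigoplus_{w\in W}} F^{L}(c) \xrightarrow{\ \bigoplus_{w}(\Id - \rho_{w})\ } F^{L}(c)\Bigr).
\]
The image of the last map is $\sum_{w}\im(\rho_{w} - \Id)$, so $\fj_{!}(c) = F^{L}(c)\big/\sum_{w}\im(\rho_{w}-\Id) = F^{L}(c)_{W}$, the coinvariants for the action $\{\rho_{w}\}$, as claimed. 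Note that no $\Q$-linearity or idempotent completeness of $\cC$ is used; the only nonformal inputs are Theorem~\ref{theorembarrbeck} and the compatibility of $\psi$ with unit and multiplication.
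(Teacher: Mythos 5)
Your proof is correct and follows the same strategy as the paper's (very terse) argument: transport the $W$-action across the adjunction isomorphism $\End(F^L c)\simeq\bbHom(c,W_\ast c)$, then unwind the coequalizer formula for $\fj_!$ against the direct-sum decomposition of $T$. You have simply filled in the coherence bookkeeping (the Kleisli-composition check that $\rho_v\rho_w=\rho_{wv}$, and the identification of the two parallel arrows with $\nabla$ and $\nabla\circ\bigoplus_w\rho_w$) that the paper dismisses as "by inspection."
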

	\begin{proof}
		The $W$ action comes from the identities:
		\[
		\End(F^L(c)) \simeq \bbHom(c, FF^L(c)) \simeq \bbHom(c, W_\ast c) \simeq \Z[W] \otimes \End(c).
		\]
		The identification of $\fj_!(c)$ with the coinvariants is by inspection of the formula \ref{diagramsimplicial} for $\fj_!$ as a coequalizer.
	\end{proof}
	
	\begin{remark} \label{remarkgroup}
		\begin{enumerate}
			\item Note that the functor of coinvariants for a finite group action is exact (and agrees with the functor of invariants via the norm map). In particular, $\fj_!(c)$ is a direct summand of $F^L(c)$. 
			\item If $\cC$ and $\cD$ are $\C$-linear abelian categories and $c \in \cC^T$ is a simple object, then $\End(F^L(c)) \simeq \C[W]$.
		\end{enumerate}
	\end{remark}
	
	\subsection{Filtrations}
The following notation will be useful for us later.

\begin{definition}\label{definitionfiltration}
	A \emph{filtration} of an object $a$ in a category $\cC$, indexed by a poset $(I,\leq)$, is a functor 
	\begin{align*}
	(I,\leq)              & \to \cC/a   \\,
	i \mapsto (a_{\leq i} & \mapsto a). 
	\end{align*}
	In the cases of interest to us, $I$ will be a finite poset with a maximal element $i_{max}$, and we demand in addition that $a_{\leq i_{max}}\to a$ is an isomorphism. 
	\end{definition}
	
	We will apply this definition in two settings: either $\cC$ is a Grothendieck abelian category as in the previous subsections, or $\cC$ is a triangulated category (or stable $\infty$-category).	
	In the abelian category setting we will ask for the structure maps to be monomorphisms, but in the triangulated/stable setting, we have no such condition.
	
	Let $a_{<i}$ denote the colimit of $a_{\leq j}$ over $j<i$.
	For any $i\in I$, we set $a_i$ to be the cokernel (or cone) of $a_{<i} \to a_{\leq i}$. Thus, we think of the object $a$ as being built from $a_i$ by a sequence of extensions. The \emph{associated graded} object is defined to be $\bigoplus_{i\in I} a_i$.

	\section{$D$-modules}\label{appendixdmod}
	There is a vast literature on the theory of $D$-modules; a good elementary reference is the book of Hotta--Takeuchi--Tanisaki \cite{hotta_d_2008}. 
	The triangulated category of equivariant $D$-modules was defined by Bernstein--Lunts \cite{bernstein_equivariant_1994} (in the context of sheaves) and Beilinson--Drinfeld \cite{beilinson_quantization_}, and $\infty$-categorical enhancements have been considered in \cite{gaitsgory_crystals_2011} \cite{ben-zvi_character_2009} (see also the recent book project of Gaitsgory-Rozenblyum \cite{gaitsgory_study_2017}). Below we outline some of the key properties of the theory that we will need for this paper.
	
	\subsection{On smooth affine varieties}\label{appendixdmodvar}
	Suppose $U$ is a smooth affine\footnote{We restrict to affine varieties only for convenience; all the material in this appendix (except the results relating specifically to vector spaces) may be generalized to general smooth varieties and Artin stacks.} algebraic variety, and let $\fD_U$ denote the ring of algebraic differential operators on $U$. We will write $\bM(U)$ for the category of $\fD_U$-modules. An object of $\bM(U)$ is called coherent if it is finitely generated (or equivalently, finitely presented), and we denote the subcategory of such objects by $\bM_{coh}(U)$. 
	
	We denote by $\bD(U)$, the (unbounded) derived category of $\bM(U)$. Really, I will want to consider the corresponding stable $\infty$-category, but for the purposes of this paper we will only need to consider its homotopy category, which is the usual, triangulated, derived category. The compact objects of $\bD(U)$ are given by perfect complexes of $\fD_U$-modules (or equivalently, bounded complexes whose cohomology objects are finitely generated $\fD_U$-modules, as $\fD_U$ has finite homological dimension); such complexes are called \emph{coherent}. 
	
	Given a morphism of smooth affine algebraic varieties 
	\[
	f:U\to V
	\]
	we have functors:
	\begin{align*}
	f^\circ: \bD(V) \to \bD(U) \\
	f_\ast : \bD(U) \to \bD(V).
	\end{align*}
	These are defined by derived tensor product (with no cohomological shift) with the \emph{transfer bimodules} $\fD_{U\to V}$ and $\fD_{V\leftarrow U}$ (see \cite{hotta_d_2008}, Section 1.3 for the definitions).
	If $f$ is smooth, then $f^\circ$ is $t$-exact and preserves coherence; if $f$ is a closed embedding, then $f_\ast$ is $t$-exact and preserves coherent objects \cite[Sections 2.4 and 2.5]{hotta_d_2008}.
	
	\subsection{On quotient stacks}\label{appendixdmodquot}
	Let $K$ be an affine algebraic group acting on the smooth variety $U$ and let $\fk$ denote the Lie algebra of $K$. There is a morphism of algebras
	\[
	\mu^\ast:\cU(\fk) \to \fD_{U}
	\]
	(where $\cU(\fk)$ is the universal enveloping algebra of $\fk$) representing the infinitesimal action. Given an object $\fM$ of $\bM(U)$ (i.e. a $\fD_U$-module), a $K$-equivariant structure on $\fM$ is an action of $K$ on $\fM$ by $\fD_U$-module morphisms, such that the corresponding action of $\fk$ agrees with the one coming from $\mu^\ast$. We write $\bM(U)^K$ or $\bM(X)$ for the abelian category of $K$-equivariant $\fD_U$-modules (where $X = U/K$ is the quotient stack). 
	
	Note that if $K$ is connected, then $K$-equivariance is actually a \emph{condition}: the $K$-action (when it exists) is determined by $\mu^\ast$. Thus $\bM(U)^K$ embeds as a full subcategory of $\bM(U)$ in that case.
	
	It turns out that the ``correct'' definition of $\bD(X) = \bD(U)^K$ does not agree with the derived category of $\bM(X)$ in general. Definitions and properties of the equivariant derived category $\bD(X)$ (in varying degrees of generality) can be found in \cite{beilinson_quantization_,ben-zvi_character_2009,gaitsgory_study_2017}.\footnote{One approach is to use the theory of stable $\infty$-categories (developed by Lurie in \cite{lurie_stable_2006,lurie_higher_2011}), which can be thought of as an enhancement of theory of triangulated categories (the homotopy category of a stable $\infty$-category is a triangulated category). In this enhanced setting, $\bD(X)$ can be defined as the limit (in some appropriate $\infty$-category of stable $\infty$-categories) of the cosimplicial diagram of categories obtained from the \v{C}ech simplicial object associated to the cover $U\to X$ (this is a homotopical formulation of the notion of descent). In this paper we will not need to consider the $\infty$-categorical enhancements, only their underlying triangulated category.}
This is a triangulated category which carries a $t$-structure, whose heart is $\bM(X)$. The subcategory $\bD^b_{coh}(X)$ consists of complexes with finitely many non-zero cohomology groups, each of which are coherent (as $D$-modules on $Y$). The category $\bD(X)$ is compactly generated and the subcategory $\bD_{com}(X)$ of compact objects is contained in $\bD^b_{coh}(X)$ (these subcategories agree when $X$ is a scheme, but may differ if $X$ is a stack).

	\subsection{Functors}\label{appendixdmodfun}
	Suppose $U$ and $V$ are smooth varieties, $K$ is an algebraic group acting on $U$, $L$ is an algebraic group acting on $V$, and we fix a homomorphism $\phi: K \to L$. A morphism $\widetilde{f}:U \to V$ which is equivariant for the group actions (via $\phi$) gives rise to a morphism of quotient stacks $f: U/K \to V/L$. This morphism is \emph{representable} if $\phi$ is injective, and \emph{safe} if the kernel of $\phi$ is unipotent.
	
	\begin{remark}
		The term \emph{safe} was defined more generally in the paper \cite{drinfeld_some_2013} of Drinfeld-Gaitsgory. It is shown in that paper that safe morphisms of stacks give rise to a well-behaved pair of functors on $D$-modules. 
	\end{remark}
	
	The morphism $f$ of quotient stacks is \emph{smooth} if and only if the corresponding morphism $\widetilde{f}:U\to V$ is smooth; in that case, the \emph{relative dimension of $f$} is given by the relative dimension of $\widetilde{f}$ minus the dimension of the kernel of $K\to L$. The morphism $f$ is called proper if it is representable and $\widetilde{f}$ is proper.
	
	\begin{example}
		Let $B$ be a Borel subgroup of a reductive algebraic group, $U$ the unipotent radical of $B$, and $H=B/U$. Then the morphism
		\[
		B\adjquot B \to H\adjquot H
		\]
		is safe and smooth of relative dimension $0$. Similarly, $U\adjquot U$ is safe and smooth of relative dimension $0$ over a point. On the other hand, the morphism $pt/H \to pt$ is not safe. The morphism $B/B \to G/G$ is proper.
	\end{example}

	Given a safe morphism of quotient stacks $f: X: \to Y$, we have functors:
	\[
	f_\ast : \bD(X) \rightarrow \bD(Y),
	\]
	and
	\[
	f^\circ: \bD(Y) \to \bD(X),
	\]
	induced by the corresponding morphisms for $\check C(f): \check C(U/K) \to \check C(V/L)$ (to make sense of this, one must use the base-change theorem; see Proposition \ref{propositiondmodulefacts} below). We define the functor $f^!$ as $f^\circ [\dim(X) - \dim(Y)]$.
	
	Following the convention of \cite{ben-zvi_character_2009}, we define $\D_X(\fM)$ by the usual formula for $D$-module duality when $\fM \in \bD^b_{coh}(X)$, and extend by continuity to define the functor
	\[
	\D_X: \bD(X) \to \bD(X)'.
	\]
	\begin{remark}
		If $\fM, \fN \in \bD^b_{coh}(X)$, then 
		\[
		R\bbHom(\fM, \fN) \simeq R\bbHom(\D(\fN),\D(\fM)).
		\]
		This formula fails in general, if we drop the coherence assumption.
	\end{remark}
	
	Here, we gather all the properties of $D$-module functors that we may need in this paper.
	\begin{proposition}[\cite{gaitsgory_study_2017} \cite{hotta_d_2008}]\label{propositiondmodulefacts}
		\begin{enumerate}
			\item If $f$ is proper, then $f_\ast \simeq \D_Y f_\ast \D_X$ preserves coherence and is left adjoint
			to $f^!$. We sometimes write $f_!$ instead of  $f_\ast$ in that case.
			\item If $f$ is smooth of relative dimension $d$, then $f^!$ preserves coherence and 
			$f^\ast := f^! [-2d]$ is left adjoint to $f_\ast$.
			The functor $f^\circ = f^![-d]$ $t$-exact, and $f^\circ \simeq \D_X f^\circ \D_Y$.
			\item If 
			\[
			\xymatrix{
				X \times _W V \ar[r]^-{\tilde{f}} \ar[d]_-{\tilde{g}} & V \ar[d]^-g \\
				X \ar[r]_-f & W
			}
			\]
			is a cartesian diagram of stacks, then the base change morphism is an
			isomorphism: $g^! f_\ast \cong \tilde{g}_\ast \tilde{f}^!$.
			\item We have the projection formula:
			\[
			f_\ast \left( f^! \fM \otimes \fN \right) \simeq \fM \otimes f_\ast (\fN).
			\]
			\item The category $\bD(X)$ carries a symmetric monoidal tensor product 
			\[
			{\fM}\otimes  {\fN} := \Delta^! ({\fM}\boxtimes {\fN}) \simeq \fM \otimes_{\cO_X} \fN [-\dim(X)].
			\]
			\item We have an internal Hom:
			\[
			\bbHOM ({\fM} ,{\fN}) := \D({\fM}) \otimes {\fN}.
			\]
			If $\fM$ and $\fN$ are in $\bD^b_{coh}(X)$ then 
			\[
			R\bbHom(\fM, \fN) = p_{X\ast} \bbHOM(\fM, \fN).
			\]
		\end{enumerate}
	\end{proposition}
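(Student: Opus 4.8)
\emph{Overall strategy.} The plan is to bootstrap everything from the case of smooth affine varieties, where all six assertions are classical, and then propagate them to quotient stacks through the \v{C}ech/descent presentation of $\bD(X)$ recalled in Subsection~\ref{appendixdmodquot}. For a morphism $\widetilde f\colon U\to V$ of smooth affine varieties one factors $\widetilde f$ as a closed embedding followed by a smooth projection; for the smooth projection, $t$-exactness of $\widetilde f^\circ$, preservation of coherence, the adjunction $(\widetilde f^\ast,\widetilde f_\ast)$ after the stated shift, base change, the projection formula, and $\widetilde f^\circ\simeq\D_U\widetilde f^\circ\D_V$ are all in Hotta--Takeuchi--Tanisaki \cite{hotta_d-modules_2008}; for the closed embedding these reduce to Kashiwara's equivalence, and properness of $\widetilde f_\ast$ together with $\widetilde f_\ast\simeq\D_V\widetilde f_\ast\D_U$ is the usual compatibility of de Rham pushforward with duality. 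The formula $\fM\otimes\fN\simeq\fM\otimes_{\cO_U}\fN[-\dim U]$, the definition $\bbHOM(\fM,\fN)=\D(\fM)\otimes\fN$, and the comparison $R\bbHom(\fM,\fN)=p_{U\ast}\bbHOM(\fM,\fN)$ on $\bD^b_{coh}$ are likewise local and classical there (some of this is already recorded in Subsection~\ref{appendixdmodvar}), so I would simply cite all of it.

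\emph{Descent to stacks.} For a safe morphism $f\colon X=U/K\to Y=V/L$ the functors $f_\ast$ and $f^\circ$ are obtained by applying the affine functors above levelwise to the map of \v{C}ech nerves $\check C(f)\colon\check C(U/K)\to\check C(V/L)$ and taking the (co)limit in $\DGCat$; making this well defined already uses affine base change on each square of the nerve. Each clause of the proposition is then inherited levelwise: an adjunction with its unit and counit passes to limits of diagrams, so $(f^!,f_\ast)$ for proper $f$ and $(f^\ast,f_\ast)$ for smooth $f$ survive; $t$-exactness of $f^\circ$ for smooth $f$ follows because the forgetful functor $\bD(X)\to\bD(U)$ is $t$-exact and conservative and carries $f^\circ$ to $\widetilde f^\circ$; base change along a cartesian square of stacks reduces, after taking nerves, to the cartesian squares of affine varieties appearing at each simplicial level; and the projection formula, the descriptions of $\otimes$ and $\bbHOM$, and the $R\bbHom$ comparison are inherited the same way, using for proper $f$ (representable, $\widetilde f$ proper) that $f_\ast$ preserves $\bD^b_{coh}$.

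\emph{The main obstacle.} The genuinely delicate point is the compatibility of Verdier duality $\D_X$ with $f_\ast$ for proper $f$ and with $f^\circ$ for smooth $f$ at the stacky level: $\bD(X)$ is not the derived category of its heart, so $\D_X$ is defined by the usual formula only on $\bD^b_{coh}(X)$ and extended by continuity, and one must check that this extension is matched by the levelwise dualities on the nerve with no spurious shift or infinite cohomological amplitude. This is exactly where the \emph{safe} hypothesis (kernel of $K\to L$ unipotent, so that the relevant classifying-stack contributions are, up to quasi-isomorphism, finite-dimensional algebras concentrated in degree $0$ --- cf.\ Remark~\ref{remarkptmodK}) enters. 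I would settle it by invoking the treatment of safe morphisms of Drinfeld--Gaitsgory \cite{drinfeld_finiteness_2013} and the general $(f^!,f_\ast)$-formalism of Gaitsgory--Rozenblyum \cite{gaitsgory_study_????}, which establish precisely that safe $f$ carry a well-behaved pushforward--pullback formalism compatible with duality; everything else is then formal.
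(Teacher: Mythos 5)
The paper gives no proof of this proposition at all: it is stated as a compendium of background facts attributed to Gaitsgory--Rozenblyum \cite{gaitsgory_study_????} and Hotta--Takeuchi--Tanisaki \cite{hotta_d-modules_2008}, and the surrounding text makes clear that the appendix is merely ``gathering'' results that will be cited. So there is no in-paper argument to compare your proposal against; what you have done is supply a sketch of what those references do. Your strategy --- establish each item on smooth affine varieties via the standard material (Kashiwara's equivalence, the adjunctions and exactness of $f^\circ$ for smooth $f$, duality compatibilities), then descend along the \v{C}ech nerve $\check C_\bullet(U/K)$ to quotient stacks, and invoke Drinfeld--Gaitsgory's theory of safe morphisms for the delicate functoriality and duality points --- is indeed the standard route and is consistent with how the cited sources (and the conventions of Subsections~\ref{appendixdmodvar}--\ref{appendixdmodfun}) set things up. The one place you should be a little more careful is the last paragraph: $\D_X$ here takes values in the \emph{dual} category $\bD(X)'$ rather than in $\bD(X)$, as the paper's convention (following \cite{ben-zvi_character_2009}) emphasizes, so ``matching levelwise dualities on the nerve'' is not quite the right picture outside $\bD^b_{coh}(X)$; also, safety of $f$ means $\ker(K\to L)$ is unipotent, so the heuristic via Remark~\ref{remarkptmodK} applies to the \emph{fiber} $pt/\ker\phi$ rather than to $pt/K$ or $pt/L$ themselves. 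Both points are correctly treated in the Drinfeld--Gaitsgory and Gaitsgory--Rozenblyum references you cite, so the gap is cosmetic rather than logical. In short: the proposal is a reasonable account of the intended (but unwritten) proof; since the paper simply cites, there is nothing to compare beyond noting that your approach agrees with the one the citations suggest.
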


The following elementary lemma is probably well-known, but we include a proof for completeness. 
\begin{lemma}\label{lem:stratifications}
	Let $X$ be a variety with a finite partition in to locally closed subsets indexed by a poset $(\cS,\leq)$
	\[
	X = \bigcup_{s\in \cS} X_s
	\] 
	with the property that the closure of a stratum is the union of smaller strata:\footnote{Note that the poset $\cS$ is equipped with the topology for which closed sets are upper sets, contrary to the standard convention. This is to match with the example of the Lusztig stratification of $\fg$ indexed by the poset of conjugacy classes of Levis.}
	\[
	\overline{X_s} = X_{\geq s} = \bigcup_{t\geq s} X_t
	\]
	We write $k_s:X_s \hookrightarrow X$ for the corresponding locally closed embedding.
	\begin{enumerate}
		\item If $\fM \in \bM(X)$ is a non-zero object, then there exists $s\in \cS$ such that $H^0 k_s^!(\fM)$ is non-zero.
		\item If $\fN \in \bD(X)$ is a bounded below complex, then $\fN \in \bD(X_s)^{\geq 0}$ if and only if $k_s^!(\fN) \in \bD(X_s)^{\geq 0}$ for all $s$.
	\end{enumerate}
\end{lemma}
\begin{proof}
	By way of notation, given the inclusion of a locally closed subvariety $f:V\hookrightarrow Y$ we write $R\Gamma_V$ for the (derived) functor $f_\ast f^!$ and $R^0\Gamma_V$ for $H^0 f_\ast f^!$.
	\begin{enumerate}
		\item
		Let $s\in \cS$ be maximal such that $R^0\Gamma_{X_{\geq s}} \fM$ is non-zero. Then we have a left exact sequence
		\[
		0 \to R^0\Gamma_{X_{>s}} \fM \to R^0\Gamma_{X_{\geq s}} \fM \to R^0\Gamma_{X_s} \fM
		\]
		The maximality of $s$ means that the left hand term vanishes, so the middle term must inject in to the right. Thus $R^0\Gamma_{X_s} \fM$ is non-zero or equivalently $H^0 k_s^! \fM$ is non-zero as required.
		\item One direction is clear: if $\fN \in \bD(X)^{\geq 0}$ then $k_s^! \fM \in \bD(X_s)^{\geq 0}$ for all $s \in \cS$, as the functors $k_s^!$ are left $t$-exact. For the converse, suppose $\fN$ has a non-zero negative cohomology module. We must show that $k_s^!(\fN)$ has a non-zero negative cohomology module for some $s$. Let $\fM = H^i(\fN)$ be the smallest non-zero cohomology object (by assumption $i$ is negative). By the first part of this lemma, we can find $s\in \cS$ such that $H^0k_s^!\fM \neq 0$. Now by inspection of the long exact sequence associated to the triangle
		\[
		\xymatrix{
			k_s^!\fM[-i] \ar[r] & k_s^!\fN \ar[r] & k_s^!\tau^{>i}\fN \ar[r]^-{[1]}&	
		}	
		\]
		we see that $H^i k_s^!\fN \neq 0$ as required.
	\end{enumerate}
\end{proof}

\begin{lemma}[\cite{kashiwara_d_2003}, Proposition 2.8]\label{lemmasingsuppsupp}
Given $\fM \in \bM(V)$, we have $\pi (SS(\fM)) = Supp(\fM)$, where $\pi:T^\ast V \to V$ is the projection.
\end{lemma}
	\subsection{Fourier Transform}
	Let $V$ be a complex vector space with dual space $V^\ast$. Fourier transform for $D$-modules is given by an isomorphism 
$
\fD_V \simeq \fD_{V^\ast}
$
which identifies $V^\ast \subseteq \fD_V$ (given by linear functions) with $V^\ast \subseteq \fD_{V^\ast}$ (given by constant coefficient derivations) by $-1$ times the identity map, and identifies $V\subseteq \fD_V$ and $V\subseteq \fD_{V^\ast}$  by the identity map. This isomorphism gives rise to $t$-exact inverse equivalences:
	\[
	\xymatrixcolsep{3pc}\xymatrix{
		\bD(V) \ar@/^0.3pc/[r]^{\F_V} &  \ar@/^0.3pc/[l]^{\F_{V^\ast}} \bD(V^\ast)
	}
	\]
	
\begin{lemma}
	Suppose $i:A \hookrightarrow V$ is the inclusion of a linear subspace; let $p: V^\ast \to A^\ast$ denote the adjoint to $i$. We have:
	\begin{align*}
	\F_A i^\circ  \simeq p_\ast \F_V :\bD(V) \to \bD(A^\ast) \\
	\F_V i_\ast \simeq p^\circ \F_A: \bD(A) \to \bD(V^\ast). 
	\end{align*}
\end{lemma}
\begin{proof}
We have that $i^\circ$ and $p_\ast$ are both given by the formula $\Sym(A^*) \otimes_{\Sym(V^\ast)}^{L} (-)$; it is readily checked that they agree as functors after identifying the source and target via Fourier transform as required (similarly for $i_\ast$ and $p^\circ$).
\end{proof}

\begin{lemma}\label{lemmasingsuppFourier}
Suppose $\fM \in \bM(V)$, $Supp(\fM)$ is contained in a closed conical subset $C\subseteq V$, and $Supp(\F(\fM))$ is contained in a closed conical subset $D \subseteq V^\ast$. Then $SS(\fM) \subseteq C\times D$.
\end{lemma}
\begin{proof}
By Lemma \ref{lemmasingsuppsupp}, $SS(\fM) \subseteq C\times V^\ast$.
Let $I_C \subseteq \Sym(V)$ denote the homogeneous (radical) ideal whose corresponding vanishing set is $C$. Pick a coherent submodule $\fM'$ of $\fM$, and a good filtration $F_0 \subseteq F_1 \subseteq \ldots $ of $\fM'$. Then as $Supp(\F(\fM))\subseteq C$, for every $m\in F_i$ there exists a positive integer $N$ such that $I_C^N m = 0$; thus $I_C^N \overline{m} = 0$ also, which means that $SS(\fM) \subseteq V\times C$ as claimed.
\end{proof}
\begin{remark}
The converse of Lemma \ref{lemmasingsuppFourier} does not hold in general. For example, take $\fM = \F \delta_{\alpha}$ to be the Fourier transform of a delta-function $D$-module at a non-zero point $\alpha \in V^\ast$. 
\end{remark}

We will also need the following result.
\begin{proposition}[\cite{brylinski_transformations_1986} Corollaire 7.25] \label{prop:brylinski}
	Let $V$ be a vector space, and suppose $\fF \in \bM(V)$ is regular holonomic and $\G_m$-monodromic (that is, the Euler vector field induced by the $\G_m$-action on $V$ acts locally finitely on $\fF$). Then $SS(\fF)$ and $SS(\F_V(\fF))$ are identified under the natural identification of $T^\ast V$ and $T^\ast V^\ast$.
\end{proposition}

\begin{remark}
	The proposition fails if $\fF$ is not assumed regular holonomic or monodromic. For example if $\fF$ is the $D$-module of delta functions $\delta_a$ supported at some non-zero $a\in V$ (which is not $\G_m$-monodromic), then its Fourier transform $\F_V \delta_a = \cE^a$ is an exponential $D$-module (which is not regular). We have  $SS(\delta_a) = \{a\} \times V^\ast \subseteq V\times V^\ast$, but $SS(\cE^a) = V^\ast \times \{0\}$.
\end{remark}

%

%
\bibliography{master}
\bibliographystyle{amsalpha}

\end{document}